\documentclass[a4paper,11pt]{article}
\usepackage{aligned-overset}
\usepackage{cancel}
\usepackage{graphicx}        
\usepackage{multicol}        
\usepackage[bottom]{footmisc} 
\usepackage{amsmath}
\usepackage{empheq}
\usepackage{epsfig}
\usepackage{authblk}
\usepackage{float}
\usepackage{diagbox}
\usepackage{nicefrac}
\usepackage[version=4]{mhchem}
\usepackage[T1]{fontenc}
\usepackage{blkarray}
\usepackage{mathtools,amsthm,amsfonts}   
\usepackage{citesort}
\usepackage[style=numeric-comp,maxbibnames=99,bibencoding=utf8,giveninits=true]{biblatex}
\bibliography{elasticity_robust_nodal.bib}
\usepackage{helvet}  

\usepackage{newtxtext}
\usepackage[upint,varg]{newtxmath}

\usepackage{hhline}
\usepackage{tikz}

\usepackage{color}

\usepackage{bm}
\usepackage{nicefrac}
\usepackage[hmargin={28mm,28mm},vmargin={30mm,35mm}]{geometry}
\usepackage{array}
\usepackage{pdfpages}
\usepackage{enumitem}
\usepackage{bbold}
\usepackage{xparse}								
\usepackage{stmaryrd}							
\usepackage{sidecap}								
\usepackage{upgreek}
\usepackage{accents}
\allowdisplaybreaks
\usepackage{tcolorbox}
\usepackage{algorithmic}
\usepackage[colorlinks,allcolors={blue}]{hyperref}
\usetikzlibrary{shapes,arrows,positioning,calc}
\usepackage{alltt}
\usepackage[bbgreekl]{mathbbol}
\DeclareSymbolFontAlphabet{\mathbb}{AMSb}
\DeclareSymbolFontAlphabet{\mathbbl}{bbold}

\def\R{\mathbb R}

\def\P{\mathbb P}

\def\T{{\mathbf{T}}}

\def\<{\langle}
\def\>{\rangle}

\def\Chi{\raise .3ex \hbox{\large $\chi$}} 

\def\ov{\overline}

\def\normal{\mathbf{n}}

\def\tang{{\mathbf{\tau}}}
\def\trescacof{\text{g}}
\def\lan{\lambda_{\normal}}
\def\laf{{\boldsymbol{\lambda}}}
\def\lat{{\boldsymbol{\lambda}_{\tang}}}
\def\la{\lambda}
\def\lanh{\la_{h,n}}
\def\lath{{\boldsymbol{\lambda}_{h,\tang}}}

\def\x{{\bf x}}

\def\F{{\bf F}}

\def\cells{{\mathcal{M}}}
\def\faces{{\mathcal{F}}}
\def\nodes{{\mathcal{V}}}
\def\edges{{\mathcal{E}}}
\def\cK{\mathcal{K}}

\def\cF{\mathcal{F}}

\def\x{{\bf x}}

\def\a{\alpha}

\def\bu{\mathbf{u}}
\def\bq{\mathbf{q}}
\def\bv{\mathbf{v}}

\newcommand{\Kz}{{\mathcal{K}z}}
\newcommand{\Lz}{{\mathcal{L}z}}

\def\cE{\mathcal{E}}
\def\cF{\mathcal{F}}

\def\cK{\mathcal{K}}

\def\cM{\mathcal{M}}

\def\cV{\mathcal{V}}

\def\cW{\mathcal{W}}
\def\frakr{\mathfrak{r}}

\def\fC{\mathfrak{C}}

\def\blambda{\boldsymbol{\lambda}}

\def\Up{\Upsilon}
\def\bv{{\bf{v}}}

\def\dgsk{\uline{\boldsymbol{X}}_{{\bf grad},h}^{k}}
\def\ddgsk{\uline{\boldsymbol{X}}_{{\bf grad},h,0}^{k}}

\def\dgskS{\uline{\boldsymbol{X}}^{k,\ell,\star}_{\mathbf{grad},h}}
\def\ddgskS{\uline{\boldsymbol{X}}^{k,\ell,\star}_{\mathbf{grad},h,0}}
\def\dlskS{\uline{\boldsymbol{X}}^{k,\ell,\star}_{{\bf grad},K}}
\newcommand{\uS}[1]{\underline{\mathbf{#1}}^{\star}_{h}} 
\newcommand{\uSe}[2]{\underline{\mathbf{#1}}^{\star}_{#2,h}} 
\newcommand{\ukS}[1]{\underline{\mathbf{#1}}^{\star}_{K}} 
\newcommand{\ub}[1]{\underline{\mathbf{#1}}^{\bub}_{h}}  

\newcommand{\GS}[1]{\mathbb{G}^{k,\star}_{#1}}
\newcommand{\UpS}[1]{\Upsilon^{k+1,\star}_{#1}}
\newcommand{\IhS}{\underline{\mathbb{I}}^{\star}_{h}}
\newcommand{\Jsig}[1]{\jump{#1}_{\sigma}}
\newcommand{\Jn}[1]{\jump{#1}_{\sigma,\normal}}
\newcommand{\RhS}{\underline{\mathbf{R}}^{\star}_{h}}
\newcommand{\Pgp}[2]{P_{\gamma,+}\big(#1,#2\big)}
\newcommand{\Pt}[2]{\boldsymbol{P}_{\gamma,g}\big(#1,#2\big)}

\newcommand{\bPg}[2]{\mathbf{P}_{\gamma_{h}}\big(#1,#2\big)}
\newcommand{\continuousbPg}[2]{\mathbf{P}_{\gamma}\big(#1,#2\big)}

\def\krond{\delta_{k,\ell}}

\newcommand{\stabc}[2]{\mathbb{S}_{E,\nu,h}(#1,#2)}
\newcommand{\stabin}[2]{\mathbb{S}_{h}(#1,#2)}
\def\dsml{\mathcal{U}_{h}^{k,\ell}}
\def\Ms{\boldsymbol{M}_{h}^{\ell}}

\def\[{\partial}
\def\O{\Omega}
\def\tr{{\rm {tr}}}
\def\bc{\bbtau}

\def\djumph#1{\jump{#1}_{h}}
\def\djumphn#1{\jump{#1}_{h,\normal}}

\newcommand{\bspace}{\O\setminus\overline{\Gamma}}
\newcommand{\Ks}{{\mathcal{K}s}}
\newcommand{\Ke}{{\mathcal{K}e}}
\newcommand{\Le}{{\mathcal{L}e}}
\newcommand{\Ls}{{\mathcal{L}s}}
\def\Ksig{{K\!\sigma}}
\def\Lsig{{L\!\sigma}}

\newcommand{\uvec}[1]{\underline{\mathbf{#1}}}
\newcommand{\bvec}[1]{\boldsymbol{#1}}
\newcommand{\bo}[1]{\mathbf{#1}}
\newcommand{\wbo}[1]{\mathbf{#1}^{\star}}
\newcommand{\bx}{\bvec{x}}
\newcommand{\bos}[1]{\boldsymbol{#1}}

\newcommand{\bub}{{\mathrm{b}}}
\def\ext{\text{ext}}
\def\inte{\text{int}}
\def\R{\mathbb{R}}

\def\P{\mathbb{P}}

\def\I{\mathbb{I}}

\newcommand{\DIV}{\operatorname{\mathrm{div}}}

\newcommand{\Poly}[1]{\mathbb{P}^{#1}}

\newcommand{\Polybub}[2]{\Poly{#1,#2,\star}}

\newcommand{\Sigmaspace}{\mathbf{\Sigma}}    

\newcommand{\NORM}[2]{\|#2\|_{#1}}
\newcommand{\SEMINORM}[2]{|#2|_{#1}}

\def\O{\Omega}

\def\bbeps{\bbespilon}
\newcommand{\Id}{\mathrm{I}_3}

\newcommand{\jump}[1]{\llbracket #1 \rrbracket}

\newcommand{\email}[1]{\href{mailto:#1}{#1}}

\begingroup
    \makeatletter
    \@for\theoremstyle:=definition,remark,plain\do{%
        \expandafter\g@addto@macro\csname th@\theoremstyle\endcsname{%
            \addtolength\thm@preskip\parskip
            }%
        }
\endgroup

\newtheorem{theorem}{Theorem}
\numberwithin{theorem}{section}
\newtheorem{proposition}[theorem]{Proposition}
\newtheorem{lemma}[theorem]{Lemma}

\theoremstyle{remark}
\newtheorem{remark}[theorem]{Remark}
\theoremstyle{definition}

\def\div{{\rm \textbf{div}}}
\def\divScal{{\rm div}}

\definecolor{labelkey}{rgb}{0.6,0,1}

\usepackage{marginnote}

\usepackage{multirow}
\usepackage{parskip}
\usepackage{algorithmic}
\usepackage{pgfplots,pgfplotstable}
\pgfplotsset{compat=1.18}
\usepgfplotslibrary{groupplots}
\usepackage{subcaption}
\newcommand{\logLogSlopeTriangle}[5]
{
    \pgfplotsextra
    {
        \pgfkeysgetvalue{/pgfplots/xmin}{\xmin}
        \pgfkeysgetvalue{/pgfplots/xmax}{\xmax}
        \pgfkeysgetvalue{/pgfplots/ymin}{\ymin}
        \pgfkeysgetvalue{/pgfplots/ymax}{\ymax}

        \pgfmathsetmacro{\xArel}{#1}
        \pgfmathsetmacro{\yArel}{#3}
        \pgfmathsetmacro{\xBrel}{#1-#2}
        \pgfmathsetmacro{\yBrel}{\yArel}
        \pgfmathsetmacro{\xCrel}{\xBrel}

        \pgfmathsetmacro{\lnxB}{\xmin*(1-(#1-#2))+\xmax*(#1-#2)} 
        \pgfmathsetmacro{\lnxA}{\xmin*(1-#1)+\xmax*#1} 
        \pgfmathsetmacro{\lnyA}{\ymin*(1-#3)+\ymax*#3} 
        \pgfmathsetmacro{\lnyC}{\lnyA+#4*(\lnxA-\lnxB)}
        \pgfmathsetmacro{\yCrel}{\lnyC-\ymin)/(\ymax-\ymin)}

        \coordinate (A) at (rel axis cs:\xArel,\yArel);
        \coordinate (B) at (rel axis cs:\xBrel,\yBrel);
        \coordinate (C) at (rel axis cs:\xCrel,\yCrel);

        \draw[#5]   (A)-- node[pos=0.5,anchor=north] {\scriptsize{1}}
                    (B)-- 
                    (C)-- node[pos=-0.15,anchor=west] {\scriptsize{#4}} 
                    cycle;
    }
}

\usepackage{diagbox}
\usepackage{booktabs}

\makeatletter
\def\thm@space@setup{%
  \thm@preskip=\parskip \thm@postskip=0pt
}
\makeatother

\usepackage[normalem]{ulem}
\newcounter{corr}
\definecolor{violet}{rgb}{0.580,0.,0.827}
\newcommand{\corr}[3]{\typeout{Warning : a correction remains in page \thepage}
  \stepcounter{corr}
 	      {\color{blue}\ifmmode\text{\,\sout{\ensuremath{#1}}\,}\else\sout{#1}\fi}
              {\color{red}#2}
              {\color{violet} #3}
}

\newcommand{\klpair}[4]{
  \addplot [#2, line width=1.3pt, mark=#3, mark size=2.3pt] table[x=Ncbrt, y=H1rel] {data/#1_A.dat};
  \addplot [#2, line width=1.3pt, dashed, mark=#4, mark size=2.3pt, mark options={solid,fill=#2}]
    table[x=Ncbrt, y=LambdaErr] {data/#1_B.dat};
}

\newcommand{\klpairleg}[5]{
  \addplot [#2, line width=1.3pt, mark=#3, mark size=2.3pt] table[x=Ncbrt, y=H1rel] {data/#1_A.dat};
  \addlegendentry{$k{=}#5$, energy$\quad$}
  \addplot [#2, line width=1.3pt, dashed, mark=#4, mark size=2.3pt, mark options={solid,fill=#2}]
    table[x=Ncbrt, y=LambdaErr] {data/#1_B.dat};
  \addlegendentry{$k{=}#5$, multiplier$\quad$}
}

\pgfplotsset{
  everypanel/.style={
    width=7.6cm, height=6.6cm,
    xlabel={$N_{\mathrm{cell}}^{1/3}$}, ylabel={Relative error},
    xlabel style={font=\small}, ylabel style={font=\small},
    tick label style={font=\scriptsize},
  }
}

\newcommand{\panelBubbleLeg}[2]{
  \begin{loglogaxis}[everypanel, legend columns=3, legend to name=#2]
    \klpairleg{#1_k0l0}{black}{star}{*}{0}
    \klpairleg{#1_k1l1}{red}{square*}{triangle*}{1}
    \klpairleg{#1_k2l2}{blue}{x}{o}{2}
    \logLogSlopeTriangle{0.5}{0.4}{0.1}{1}{black};
    \logLogSlopeTriangle{0.5}{0.4}{0.1}{2}{black};
    \logLogSlopeTriangle{0.5}{0.4}{0.1}{3}{black};
  \end{loglogaxis}
}
\newcommand{\panelNoBubble}[1]{
  \begin{loglogaxis}[everypanel]
    \klpair{#1_k1l0}{red}{square*}{triangle*}
    \klpair{#1_k2l1}{blue}{x}{o}
    \logLogSlopeTriangle{0.5}{0.4}{0.1}{1}{black};
    \logLogSlopeTriangle{0.5}{0.4}{0.1}{2}{black};
    \logLogSlopeTriangle{0.5}{0.4}{0.1}{3}{black};
  \end{loglogaxis}
}

\begin{document}
\title{An Arbitrary-Order Augmented Lagrangian Discrete De Rham Scheme for Contact Mechanics with Tresca Friction}
\author[1,2]{{J\'er\^ome Droniou}\footnote{\email{jerome.droniou@cnrs.fr}}}
\author[1]{{Raman Kumar}\footnote{ \email{raman.kumar@umontpellier.fr}}}
\author[3]{{Roland Masson}\footnote{\email{roland.masson@inria.fr}}}
\affil[1]{IMAG, Univ. Montpellier, CNRS, Montpellier, France.}%
\affil[2]{School of Mathematics, Monash University, Australia.}%
\affil[3]{Universit\'e C\^ote d’Azur, Inria, CNRS, Laboratoire J.A. Dieudonn\'e, Team Galets, Nice, France.}%

\date{}
\maketitle
\vspace{-.5cm}
{\footnotesize{
\centering
}}

\begin{abstract}
We design and analyse an arbitrary-order augmented Lagrangian Discrete de Rham
polytopal scheme for contact mechanics in fractured media. The fractures are represented
as objects of codimension one in the matrix, a linear elastic medium, and a Tresca friction model is adopted when contact 
between fracture walls occurs.
Both the normal Signorini-type/traction constraint and the Tresca friction law are recast as projection identities, and are combined into a single augmented Lagrangian bilinear form. The displacement is discretised by a DDR space of arbitrary polynomial degree $k\ge 0,$ enriched on the fracture faces by bubble degrees of freedom that ensure stability via the existence of a discrete lifting between the Lagrange multiplier space and the discrete jump of the displacement along the fracture. We establish an abstract error estimate yielding optimal order convergence for the displacement and the Lagrange multiplier variables. 
Extensive numerical experiments confirm the theoretical analysis and demonstrate the practical accuracy and robustness of the scheme.
\end{abstract}
\textbf{MSC2020.}  65N15, 65N30

  \textbf{keywords:} contact mechanics, Tresca friction, fracture networks,
discrete de Rham method, polytopal method, augmented Lagrangian,
bubble stabilisation, error estimates.

\section{Introduction}
The numerical simulation of mechanical deformation in fractured or faulted porous media is central to several subsurface applications, including the geological storage of \ce{CO2} and the development of deep geothermal systems, where the assessment of fault reactivation and induced seismicity relies on an accurate description of the contact behaviour at the fracture interfaces~\cite{JHA2014,thm-bergen,GH19}. In these models, the fractures are represented as a network of planar surfaces of co-dimension one embedded
in the surrounding matrix, leading to the mixed-dimensional (fracture--matrix) formulations that have been the object of many recent works~\cite{MJE05,BoonNordbotten22,contact-norvegiens}; the rock deformation is then coupled, across the interfaces, with non-penetration and frictional conditions. Two features make the discretisation of such problems demanding. First, the contact conditions are intrinsically nonsmooth: they take the form of a variational inequality combining a unilateral (Signorini-type)
constraint in the normal direction with a Tresca friction law in the
tangential directions~\cite{oden-contact,HASLINGER1996313,book-contact-2005}. Second, realistic geological geometries involve fracture networks with corners, tips and intersections, together with strongly heterogeneous and distorted cells, which calls for discretisation methods supporting general polytopal meshes.
 
Several strategies have been developed to enforce contact conditions at the discrete level; we refer to~\cite{wriggers2006,Wohlmuth11} for general overviews. Penalty methods are simple and robust but inconsistent, and the large penalty parameter required to control penetration degrades the conditioning of the linear systems. Mixed formulations instead introduce a Lagrange multiplier representing the surface traction along the interface; they are consistent and physically natural, but the displacement and multiplier spaces must satisfy a discrete inf--sup (Babu\v{s}ka--Brezzi) compatibility condition, which constrains the choice of approximation spaces and, for equal-order or low-order pairs, must be restored through a dedicated stabilisation~\cite{Renard03,HR2007,Lleras-2009,burman2010interior}.
Nitsche's method avoids the inf--sup condition by eliminating the
multiplier and weakly imposing the contact conditions; optimal a priori estimates have been obtained in this framework for the Signorini problem and extended to Tresca and Coulomb friction in
the mixed-dimensional setting~\cite{Chouly2014,Chouly2017,CHLR2020,CEP20,beaude2023mixed}.
A unifying viewpoint is provided by the augmented Lagrangian formulation. Following the reformulation of the contact conditions introduced by Alart and Curnier~\cite{alart1991mixed}, the multiplier is updated through a projection of an augmented quantity built from the displacement jump and the multiplier itself. Eliminating the multiplier from this formulation recovers Nitsche's method, while retaining it yields a stabilised mixed method: this is the framework in which well-posedness and optimal error estimates for the obstacle and Signorini problems were established. More broadly, the augmented Lagrangian approach has been introduced as a unifying scheme for stabilised methods in computational mechanics~\cite{Burman.Hansbo.Larson:17,BHL2023}.
The augmented Lagrangian formulations have also been used to model frictional contact within fracture networks at the finite element level~\cite{NEJATI2016123}. The augmented Lagrangian approach keeps an explicit multiplier, which is of direct interest as a surface traction and convenient for coupling with a flow model, while improving the conditioning compared with a pure multiplier method and remaining amenable to efficient semismooth Newton solvers.

On the discretisation side, a variety of polytopal methods supporting general meshes have been designed, such as Hybrid High-Order~\cite{hho-book,dipietro-ern}, Virtual
Element~\cite{beirao-brezzi-marini,da2015virtual}, MultiPoint Stress Approximation~\cite{keilegavlen-nordbotten} and Discontinuous
Galerkin~\cite{hansbo-larson}, whose common motivation
is the geometric flexibility needed to mesh complex geological structures. Several of these have been adapted to contact mechanics through Virtual Element discretisations~\cite{WRR2016}, Nitsche-type Hybrid High-Order schemes~\cite{CEP20}, and fully discrete polytopal schemes for fracture networks~\cite{droniou2023bubble,BDMP:21}. The closest precursors of the present work are the fully discrete polytopal/Virtual Element schemes of~\cite{droniou2025analysis}, which treats Tresca frictional contact on fracture networks using a face-wise approximation of the Lagrange multiplier and a displacement space enriched by bubble degrees of freedom on the fractures to ensure inf--sup stability. These schemes handle the full geometric complexity of fracture networks, but they are built on a mixed formulation rather than an augmented Lagrangian one, and they are restricted to orders 1 or 2 approximations.
 
The discrete de Rham (DDR) framework~\cite{ddr-variant} offers a systematic way to design arbitrary-order, polytopal-compatible discretisations through discrete differential operators and potential reconstructions, supported by a well-developed theory of consistency, Poincar\'e inequalities andstability~\cite{di2025uniform,Di-Pietro.Droniou.ea:24}. It therefore provides a natural setting in which to obtain a high-order, analysis-backed scheme for contact mechanics on general meshes. An order two DDR scheme is designed for the Tresca friction recently in \cite{DKMR25}. To the best of our knowledge, however, no arbitrary-order augmented Lagrangian polytopal scheme for frictional
contact on fracture networks has yet been proposed and analysed. Filling in that gap is one of the novel contributions of this work.
Another important contribution is the first complete analysis of an augmented Lagrangian scheme for Tresca friction, which does not seem to be covered by the literature, even in classical finite element methods \cite[Section~8.4.3]{chouly2023finite}.

We propose and analyse an arbitrary-order augmented Lagrangian DDR scheme for a static, isotropic, linearly elastic medium subject to Tresca frictional contact at matrix--fracture interfaces. The displacement is discretised by a DDR space of arbitrary degree~$k\ge 0$, enriched by face-bubble degrees of freedom on the fracture faces. This enrichment distinguishes the resulting spaces and reconstruction operators from their standard counterparts, and it guarantees the discrete lifting between the Lagrange multiplier space of degree $\ell\in\{\max\{0,k-1\},k\}$ and the discrete jump of the displacement across the fracture network. The contact conditions are imposed through the Alart--Curnier projection, handling the normal Signorini constraint and the Tresca friction law within a single augmented Lagrangian bilinear form. Contrary to most finite element analyses of augmented Lagrangian methods, our analysis does not require the mesh to be quasi-uniform in the entire domain, only along the fracture network; this is particularly relevant in the context of arbitrary-order polytopal schemes as local refinements can be seamlessly done near complex geometries (e.g., fracture networks), while simpler regions (e.g., matrix far from fractures) can be properly captured using coarser meshes. Our main contributions are the
following:
\begin{itemize}
\item Design and complete analysis of an arbitrary-order polytopal scheme for the augmented Lagrangian formulation of contact mechanics. 
\item A discrete lifting between the Lagrange multiplier space and the discrete displacement jump across the fracture faces, which leads to well-posedness of the scheme.
\item An abstract error estimate (Theorem~\ref{thm:abs.err.bound}),
formulated in the spirit of the Gradient Discretisation
Method~\cite{gdm} through primal-consistency and limit-conformity
measures, which separate the discretisation error into independently
bounded approximation terms.
\item A comprehensive error analysis with an energy-norm convergence rate of order $\mathcal{O}(h^{\frakr+\frac{1}{2}})$~(Theorem~\ref{thm:error.estimate} with discrete jump~\eqref{eq:jump.n}), where $h$ denotes the mesh size, and $\frakr=k+\frac{1}{2}$ when $\ell=k$, while $\frakr=k$ when $\ell=k-1$.

\item A variant that removes a stabilisation term involving jumps of Lagrange multipliers between fracture faces. We only prove a suboptimal $\mathcal{O}(h^{\ell+\frac12})$ estimate for this variant (Theorem~\ref{thm:error.estimate.sub} with projected jump~\eqref{eq:jump.modify}), but it has the important advantage to allow for the local elimination of Lagrange multipliers, and, for $\ell = 0$, coincides with the mixed schemes of \cite{droniou2023bubble,droniou2025analysis,DKMR25}.

\item Numerical tests on tetrahedral and generalised hexahedral (polytopal) meshes that confirm the theoretical rates,  and on a network of intersecting fractures that show the efficiency of the higher-order schemes and the robustness of the Newton method with respect to the parameter in the Alart--Curnier projection.
\end{itemize}
A key technical ingredient of the inf--sup analysis is a discrete lifting operator constructed directly from the DDR degrees of freedom with face enrichment. In contrast, the finite element lifting
of~\cite{Burman.Hansbo.Larson:17} relies on a nodal cut-off function
and on a quadrature rule with strictly positive interior weights, which imposes degree-dependent limitations that our approach does not have.

The rest of the paper is organised as follows.
Section~\ref{sec:model} introduces the contact mechanics model with Tresca friction and its augmented Lagrangian formulation. Section~\ref{sec:scheme} sets up the bubble-enriched DDR spaces and reconstruction operators, establishes the discrete Korn inequality, and defines the augmented Lagrangian DDR scheme. Section~\ref{Sec:main_results} states the abstract error estimate and deduces the optimal convergence rates. The theoretical results are validated by extensive numerical tests presented in Section \ref{sec:numerical.exp.}.
Section~\ref{sec:Existence.uniqueness} is devoted to the well-posedness of the scheme, including the discrete lifting~(Lemma~\ref{rem:lifting}), the existence of a discrete solution, and its uniqueness. Section~\ref{sec:proof_error} contains the proof of the abstract error estimate. Section~\ref{sec:nostab} presents a stabiliser-free variant, for which a suboptimal bound is obtained by projecting the discrete jump. Finally, the paper is concluded in Section~\ref{sec:conclusion}.

\section{Model}\label{sec:model}
Let $\O\subset\R^3$ be a polyhedral domain having a co-dimension-$1$ fracture network $\Gamma$. The network is the union of finitely many planar fractures indexed by $\bo{I}$:
\begin{equation*}
\overline{\Gamma}=\bigcup_{i\in\bo{I}}\overline{\Gamma}_i,
\end{equation*}
where each $\Gamma_i$ is an open, simply connected polygon lying in some plane of $\R^3$. Intersections between distinct fractures, if any, are confined to their boundaries; this assumption entails no loss of generality. Throughout, we restrict ourselves to fracture configurations satisfying the following geometric requirement: every connected component of $\O\setminus\Gamma$ must touch $\partial\O$ on a set of positive two-dimensional measure. In particular, $\Gamma$ cannot disconnect any subregion of $\O$ from the outer boundary $\partial \O$.
To each side of a fracture, viewed from the matrix $\O\setminus\overline{\Gamma}$, we attach a label $+$ or $-$, together with an outward unit normal $\bo{n}^\pm$; by construction, these normals are opposite:
\begin{equation*}
\bo{n}^++\bo{n}^-=\bo{0}.
\end{equation*}
In the sequel, boldface notation is used to denote vector quantities, vector-valued functions, matrix-valued functions, their associated function spaces, and corresponding discrete spaces. For instance, $\boldsymbol{H}^1(X)=H^1(X)^3$. Under homogeneous Dirichlet boundary conditions, we consider the displacement space
\begin{equation*}
\bo{U}_0\coloneqq \boldsymbol{H}^1_0(\O\setminus\overline{\Gamma}),
\end{equation*}
where functions may be discontinuous across the fracture set $\Gamma$. This space is equipped with the norm $\lVert\bo{v}\rVert_{H^1(\O\setminus\overline{\Gamma})}=\lVert \nabla\bo{v}\rVert_{L^2(\O\setminus\overline{\Gamma})}.$ For $\bo{u} \in \bo{U}_0$, the oriented jump across $\Gamma$ is defined by
\begin{equation*}
\jump{\bo{u}}\coloneqq\gamma^+\bo{u}-\gamma^-\bo{u}.
\end{equation*}
where $\gamma^\pm$ denote the trace operators taken on the $\pm$ side of $\Gamma$. Projecting this jump onto $\bo{n}^+$ yields its normal component $\jump{\bo{u}}_{\bo{n}}\coloneqq\jump{\bo{u}}\cdot\bo{n}^+$. Normal traces of $H_{\DIV}(\bspace)$ fields on either side of the fracture are denoted analogously by $\gamma_{\bo{n}}^\pm$. The operator $\bbeps$, referred to as the symmetric gradient, is defined on $\bo{U}_0$ by $\bbeps(\bo{v})\coloneqq\frac{1}{2}\left(\nabla \bo{v}+(\nabla \bo{v})^T\right)$.

In strong form, the contact mechanics model is expressed as 
\begin{subequations}\label{mode_tresca_1} 
\begin{empheq}[left=\empheqlbrace\,]{alignat=2}
   -\div \bbsigma(\bu)&= \mathbf{f}  &&\quad \mbox{ on } \Omega{\backslash}\ov\Gamma,\\
	 \bbsigma(\bu)&=\frac{E}{1+\nu}\left(\bbeps(\bu)+\frac{\nu}{1-2\nu} (\divScal\, \bu )\Id \right) &&\quad \mbox{ on }  \Omega{\backslash}\ov\Gamma,\\
			\gamma_\normal^+\bbsigma(\bu)+\gamma_\normal^-\bbsigma(\bu) &= \mathbf{0} &&\quad \mbox{ on }  \Gamma,\\
			T_{\normal}(\bu) \leqslant 0,~ \jump{\bu }_{\normal} \leqslant 0,~ &\jump{\bu }_{\normal}T_{\normal}(\bu) =0 && \quad \mbox{ on }  \Gamma,
			\label{eq:contact.conditions.strong}\\
              \left|\T_{\tang}(\bu)\right| &\leqslant \trescacof && \quad \mbox{ on }  \Gamma,\label{eq:tang.contact.condition_1}\\
			 \T_{\tang}(\bu)  \cdot  \jump{\bu }_{\tang} +\trescacof \left| \jump{\bu }_{\tang}\right|&=0 && \quad \mbox{ on }  \Gamma,\label{eq:tang.contact.condition_2}\\
			\bu&=\mathbf{0}  &&\quad \mbox{ on } \partial\Omega,
    \end{empheq}
	\end{subequations}    
with a Tresca threshold $g\geq 0,$ Poisson ratio $\nu$, and Young’s modulus $E$ satisfying $\frac{E}{1+\nu}\in [\mu_{1},\mu_{2}]$ with $0 < \mu_{1} \leq\mu_{2}$, the $3\times 3$ identity matrix $\Id$, and normal and tangential surface tractions $T_{\normal}(\bu)=\gamma_{\normal}^+\bbsigma(\bu)\cdot{\normal}^+$ and $\T_{\tang}(\bu) = \gamma_\normal^+\bbsigma(\bu) - T_\normal(\bu) \normal^+.$
We assume that the external force term $\mathbf{f}$ belongs to $\boldsymbol{L}^2(\O).$ 

Setting $\lan=-T_{\normal}(\bu)$, the contact conditions \eqref{eq:contact.conditions.strong} are equivalent to Alart and Curnier reformulation \cite{alart1991mixed} defined as
\begin{equation}\label{Alart_cur_def}
\lan:= \gamma^{-1}\big[\jump{\bu}_{\normal}+\gamma \lan\big]_{+}
\end{equation}
where $\gamma>0$ is a user-chosen parameter and $[\cdot]_{+}=\max\{0,\cdot\}.$
For brevity, we set
$$
\Pgp{\jump{\bo{u}}_{\normal}}{\lan} := \big[\jump{\bu}_{\normal}+\gamma \lan\big]_{+}.
$$
For any $r\in \mathbb{R}^{+},$ we introduce the notation $[\cdot]_{r}$ for the orthogonal projection onto $\mathcal{B}(0,r) \subset \mathbb{R}^3,$ where $\mathcal{B}(0,r)$ is the closed ball centered at origin $\mathbf{0}$ and a radius $r.$ This projection can be defined analytically as: For all $\mathbf{a} \in \mathbb{R}^3,$
$$[\mathbf{a}]_{r}:=\begin{cases}
    \mathbf{a} & \text{ if }\; \left|\mathbf{a}\right| \leq r,\\
    r\frac{\mathbf{a}}{\left|\mathbf{a}\right|} & \text{ otherwise}.
\end{cases}$$
Setting $\lat:=-\T_{\tang}(\bu),$ the Tresca friction conditions \eqref{eq:tang.contact.condition_1}-\eqref{eq:tang.contact.condition_2} are equivalent to the formulation~\cite[Eq.~(9)]{Chouly2014} defined as 
\begin{equation}\label{eq:tang.lagrange.multiplier}
\lat:=\gamma^{-1}\big[\jump{\bu}_{\tang}+\gamma\lat\big]_{\gamma g}. 
\end{equation}
Setting
$$\Pt{\jump{\bu}_{\tang}}{\lat}:=\big[\jump{\bu}_{\tang}+\gamma\lat\big]_{\gamma g}$$
and combining \eqref{Alart_cur_def} and \eqref{eq:tang.lagrange.multiplier} we introduce the notation
\begin{equation}\label{eq:combine.Pg.def}
\continuousbPg{\jump{\bu}}{\laf}:=\Pgp{\jump{\bu}_{\normal}}{\lan}\normal^{+}+\Pt{\jump{\bu}_{\tang}}{\lat}.
\end{equation}
The augmented Lagrangian formulation for the contact problem \eqref{mode_tresca_1} reads: Find $(\bu,\laf) \in \bo{U}_0\times \boldsymbol{L}^{2}(\Gamma)$ such that, for all $(\bo{v}, \boldsymbol{\mu})\in \bo{U}_0\times \boldsymbol{L}^{2}(\Gamma)$, 
\begin{subequations}
\begin{align}
 \int_{\Omega}\bbsigma(\bu):\bbeps(\bv)+\int_{\Gamma} \gamma^{-1}\continuousbPg{\jump{\bo{u}}}{\laf} \cdot \jump{\bv}&=\int_{\Omega}\mathbf{f}\cdot \bv,\label{Form1_nonlinearform_a}\\
 \int_{\Gamma} \Big[\gamma\laf-\continuousbPg{\jump{\bo{u}}}{\laf}\Big]\cdot \boldsymbol{\mu}&=0 \label{Form1_nonlinearform_c}.
\end{align}
\end{subequations}
Note that, from the variational formulation, the Lagrange multiplier satisfies $\laf= -\gamma_\normal^+\bbsigma(\bu) = \gamma_\normal^-\bbsigma(\bu).$ 
Using the standard decomposition into normal and tangential components, we write
$$ \laf= \lan \normal^{+} +\lat.$$
Adding together \eqref{Form1_nonlinearform_a}-\eqref{Form1_nonlinearform_c} leads to the following equivalent formulation:
\begin{align}
 \int_{\Omega}\bbsigma(\bu):\bbeps(\bv)+\mathbb{C}((\bu,\laf);(\bv,\boldsymbol{\mu})) &=\int_{\Omega}\mathbf{f}\cdot \bv \quad \forall (\bo{v}, \boldsymbol{\mu})\in \bo{U}_0\times \boldsymbol{L}^{2}(\Gamma), \label{Form1_nonlinearform} 
\end{align}
where the nonlinear contact operator is defined as 
\begin{equation*}
\mathbb{C}((\bu,\laf);(\bv,\boldsymbol{\mu}))=\int_{\Gamma}\gamma^{-1}\continuousbPg{\jump{\bo{u}}}{\laf}\cdot\Big[ \jump{\bv}-\gamma \boldsymbol{\mu} \Big] + \int_{\Gamma} \gamma \laf\cdot \boldsymbol{\mu}.
\end{equation*}
 
\section{Scheme}\label{sec:scheme}

\subsection{Mesh}\label{Subsec:Mesh}
We consider a polytopal mesh of the computational domain $\Omega$ conforming to the fracture network $\Gamma$ (see \cite{ddr-variant}), in the sense that no mesh cell is intersected by $\Gamma$ in its interior. The sets of cells, faces, edges, and vertices are denoted by $\cells$, $\faces$, $\edges$, and $\nodes$, respectively. For $z\in \cells\cup \faces\cup \edges,$ we denote by $h_{z}$~(resp. $|z|$) the diameter of $z$~(resp. measure in the corresponding dimension).

For a family of mesh entities ${\cal Y}\in\{\cells,\faces,\edges,\nodes\}$ and an entity $z\in\cells\cup\faces\cup\edges\cup\nodes$, we let ${\cal Y}z$ denote the subset of elements in ${\cal Y}$ that either contain $z$ in their boundary or belong to the boundary of $z$. For instance, $\faces_K$ denotes the set of faces of a cell $K\in\cells$, while $\cells_s$ denotes the set of cells having a vertex $s\in\nodes$ on their boundary. For ${\cal Y}\in\{\faces,\edges,\nodes\}$, we further denote by ${\cal Y}^{\mathrm{int}}$ and ${\cal Y}^{\mathrm{ext}}$ the sets of interior and boundary entities, respectively.

Let $h_{\Gamma}\coloneqq \max\limits_{\sigma\in\faces_{\Gamma}} h_{\sigma},$ where $\faces_{\Gamma}\subset\faces^{\mathrm{int}}$ denotes the set of faces belonging to the fracture network. We assume that the mesh is quasi-uniform on $\Gamma$, in the sense that there exists a constant $c>0,$ independent of $h_{\Gamma}$, such that
\begin{equation}\label{eq:quasi.uniform.fracture}
ch_{\Gamma}\le h_{\sigma}
\qquad \forall \sigma\in\faces_{\Gamma}.
\end{equation}
Note that this quasi-uniformity assumption on $\Gamma$ is much less stringent than a quasi-uniformity of the entire mesh, and is compatible with applications in which the fracture network is often meshed finely while fracture-free regions of the matrix can be meshed more coarsely.
By the conformity of the mesh with the fracture network, we can  write
$$\overline{\Gamma}= \bigcup\limits_{\sigma \in \faces_{\Gamma}}\overline{\sigma}.$$
For each cell $K\in\cells$ and for every $\sigma\in\faces_K$, we denote by $\normal_{\Ksig}$ the unit outward normal to $K$ on $\sigma.$ Furthermore, for any face $\sigma \in \faces$ and any edge $e\in \edges_{\sigma},$ the symbol $\normal_{\sigma e }$ denotes the unit outward normal to $\sigma$ on $e$ in the planed spanned by $\sigma$. 

Throughout the analysis, we assume that the Tresca threshold $g$ is a piecewise constant on $\cF_{\Gamma}.$

For any mesh entity $X\in\cells\cup\faces\cup\edges$ and any integer $r\ge0$, let $\Poly{r}(X)$ denote the space of scalar-valued polynomials of total degree at most $r$ on $X$. We denote by $\Pi_X^r$ the $L^2(X)$-orthogonal projector onto $\Poly{r}(X)$, and use the same notation componentwise for vector- and tensor-valued functions. Given a collection of mesh entities $\mathcal X\in\{\cells,\faces_\Gamma\}$, we write $\Poly{r}(\mathcal X)$ for the corresponding broken polynomial space of degree at most $r$, and $\Pi_{\mathcal X}^r$ for the associated broken $L^2$-orthogonal projector.

Throughout the paper, the mesh is assumed to satisfy the regularity requirements stated in \cite[Definition~1.9]{hho-book}. The notation $a\lesssim b$ (respectively, $a\gtrsim b$) indicates that $a\le Cb$ (respectively, $a\ge Cb$), where the constant $C>0$ is independent of the mesh size and depends only on $\Omega$, $\Gamma$, the mesh regularity parameter, and, where applicable, on the material parameters (Young's modulus and Poisson's ratio) and the load $\mathbf f$.

\subsection{Discrete spaces}\label{Subsec: discrete.space}

\subsubsection{Displacement space}

To accommodate displacement jumps across matrix--fracture interfaces, the displacement unknowns are allowed to be discontinuous across the fracture network while remaining continuous across mesh entities not intersected by fractures. To formalize this construction, for each mesh entity $z\in\cF\cup\cE\cup\cV$ and each cell $K\in\cM_z$ containing $z,$ we define ${\Kz}$ as the set of cells containing $z$ and on the same connected side of $K$ with respect to the fracture network. Thus, for any two cells $K,L\in\cM_z$, one has $\Kz=\Lz$ if and only if $K$ and $L$ lie on the same side of $\Gamma.$
We associate with each equivalence class $\Kz$ a displacement DOF, denoted by $\bv_{\Kz}$. Consequently, cells located on the same side of the fracture share the same unknown, whereas cells separated by $\Gamma$ possess independent DOFs, thereby allowing for displacement discontinuities across fractures. In the absence of a fracture through $z$, then $\Kz=\cells_{z}$ and 
the set $\cM_z$ consists of a single equivalence class, yielding a unique displacement unknown at $z$~(still denoted by $\bv_{\Kz}$) and recovering the standard continuity of the displacement field.

The numerical scheme \eqref{Form1_nonlinearform} is based on the vector-valued discrete gradient space of the Discrete De Rham complex \cite{ddr-variant}, enriched with face bubbles on the positive side of $\Gamma.$ These bubble corrections are included to ensure the existence of a discrete lifting between the Lagrange multiplier space and the discrete displacement jump across the fracture faces, see Lemma~\ref{rem:lifting}; this lifting will be used to establish the stability and  a priori estimates of the discrete scheme.

Let us fix $k\ge 0$ a polynomial degree, and $\ell\in\{\max\{0,k-1\},k\}$, that is, if $k=0$ then $\ell=0$ and, if $k\ge 1$, $\ell$ is either $k-1$ or $k$.  For any $\sigma\in\faces$, we define a space of bubble vector-valued polynomials of degree $\le \ell$ by
$$\Polybub{k-1}{\ell}(\sigma)^3:= \Poly{k-1}(\sigma)^{3} \oplus \krond \; \Poly{\ell \perp k-1}(\sigma)^3,
$$
where $\Poly{\ell \perp k-1}(\sigma)$ is the orthogonal complement of $\Poly{k-1}(\sigma)$ in $\Poly{\ell}(\sigma)$, and $\krond$ denotes the Kronecker delta. 
A polynomial $\wbo{w}\in\Polybub{k-1}{\ell}(\sigma)^{3}$ is thus uniquely decomposed as
\begin{equation*}
\text{$\wbo{w}=\bo{w}+\krond\; \bo{w}^\bub$ with $\bo{w}\in\Poly{k-1}(\sigma)^3$ and $\bo{w}^\bub\in \Poly{\ell \perp k-1}(\sigma)^3$}.
\end{equation*}
For each $K\in\cells$, we denote by
\begin{equation*}
	\faces_{\Gamma,K}^+ = 
	\Big\{ \sigma \in \faces_{\Gamma} \cap \faces_{K}\,:\, \normal_{\Ksig} \cdot \normal^+  > 0  \Big\}
\end{equation*}
the set of faces of $K$ that lie on the positive side of $\Gamma$ (that set is empty if $K$ does not touch $\Gamma$ or touches it from its negative side). 

The global discrete space with face bubble enrichment is described as 
\begin{align*}
\begin{split}
\dgskS\coloneqq\{\uS{v}=&\big((\bo{v}_{\Ks})_{K\in\cM,~s\in \mathcal{V}_K},(\bo{v}_{\Ke})_{K\in\cM,~e\in \mathcal{E}_K},(\bo{v}_{\cK\sigma})_{K\in\cM,~\sigma\in\cF_K},(\krond \, \bo{v}_{K\sigma }^{\bub})_{K\in\cM,~\sigma \in \mathcal F_{\Gamma,K}^+},\\ &~(\bo{v}_K)_{K\in\cM}\big):\bo{v}_{\Ks}\in \mathbb R^3,~\bo{v}_{\Ke}\in \Poly{k-1}(e)^{3},~\bo{v}_{\cK\sigma}\in\Poly{k-1}(\sigma)^{3},\\ &\qquad \quad \bo{v}^{\bub}_{\Ksig}\in \Poly{\ell \perp k-1}(\sigma)^3,\text{ and }\bo{v}_K\in\Poly{k-1}(K)^{3}\},
\end{split}
\end{align*}
and its subspace that incorporates homogeneous Dirichlet boundary conditions on $\partial\O$ is
\begin{align*}
\ddgskS\coloneqq\{\uS{v}\in\dgskS\colon \bo{v}_{\Ks}=0 ~\text{if }s\in\cV^{\ext},~\bo{v}_{\Ke}=0\text{ if }e\in\cE^{\ext},\bo{v}_{\cK\sigma}=0 \text{ if }\sigma\in \cF^{\ext}\}.
\end{align*}
Now, any $\uS{v}\in\ddgskS$ decomposes
into bubble-free and bubble DOFs:
\begin{align}\label{eq:global.decomp}
  \uS{v} &= \uvec{v}_h+\ub{v}\\
  \text{ with } \uvec{v}_h &:= \big((\bo{v}_{\Ks})_{K\in\cM,~s\in \mathcal{V}_K},(\bo{v}_{\Ke})_{K\in\cM,~e\in \mathcal{E}_K},(\bo{v}_{\cK\sigma})_{K\in\cM,~\sigma\in\cF_K},(0)_{K\in\cM,~\sigma \in \mathcal F_{\Gamma,K}^+},(\bo{v}_K)_{K\in\cM}\big)\nonumber\\
  \text{ and }\ub{v}&:= \bigl(0,\;0,\;(\krond \,\bo{v}^{\bub}_{\Ksig})_{K\in\cM,\,\sigma \in \mathcal F_{\Gamma,K}^+}, \;0\bigr).
  \nonumber
\end{align}
We denote by $\ddgsk$ the space spanned by bubble-free vectors, which corresponds to the original DDR space:
$$
\ddgsk\coloneq \{\uS{v}\in\ddgskS\,:\,\bo{v}^{\bub}_{\Ksig}=\bo{0}\quad\forall K\in\cells\,,\;\forall\sigma\in\faces_K\}.
$$

For each $K\in\cM$, the local discrete space contains the components attached to a cell, its faces, edges, and vertices:
\begin{align*}
\begin{split}
\dlskS\coloneqq\{\ukS{v}&=\big((\bo{v}_{\Ks})_{s\in \mathcal{V}_K},(\bo{v}_{\Ke})_{e\in \mathcal{E}_K},(\bo{v}_{\cK\sigma})_{\sigma\in\cF_K},(\krond\, \bo{v}_{K\sigma}^{\bub})_{\sigma \in \mathcal F_{\Gamma,K}^+}, \bo{v}_K\big):~\bo{v}_{\Ks}\in \mathbb R^3,\\&\qquad \bo{v}_{\Ke}\in\Poly{k-1}(e)^{3},~{\bo{v}}_{\cK\sigma}\in\Poly{k-1}(\sigma)^{3},~\bo{v}_{K\sigma }^{\bub}\in \Poly{\ell \perp k-1}(\sigma)^3 \text{ and }\bo{v}_K\in\Poly{k-1}(K)^{3}\}.
\end{split}
\end{align*}
At the local level, any $\ukS{v} \in \dlskS$ admits an analogous decomposition to \eqref{eq:global.decomp}
\begin{equation*} 
\ukS{v} := \uvec{v}_{K} + \uvec{v}_{K}^{\bub},
\end{equation*}
where $\uvec{v}_{K}$ is the local element without a face bubble.

\subsubsection{Lagrange multiplier space}
The discrete space for the Lagrange multiplier is made of piecewise polynomial functions on fracture faces:
\begin{equation*}
\Ms \coloneqq\{\laf_h\in \boldsymbol{L}^2(\Gamma):~\laf_\sigma\coloneqq(\laf_h)_{|_\sigma} \in \Poly{\ell}(\sigma)^3\;\forall\sigma\in\cF_{\Gamma}\}.
\end{equation*}
The normal and tangential components of $\laf_{h}\in\Ms$ are defined by $$\lanh= \laf_{h}\cdot\normal^{+},\qquad \lath= \laf_{h}-\lanh\normal^{+}.$$

\subsection{Interpolator}
 We denote by $\boldsymbol{C}^0_0(\overline{\O}\setminus\Gamma)$ the set of continuous functions $\overline{\O}\setminus\Gamma\to\R^3$ that vanish on $\partial\O$.
The interpolator $\IhS:\boldsymbol{C}^0_0(\overline{\O}\setminus\Gamma)\rightarrow \ddgskS$ is defined as follows: For all $\bo{v}\in \boldsymbol{C}^0_0(\overline{\O}\setminus\Gamma)$,
\begin{align*}
\IhS\bo{v}\coloneqq &\big((\I_{\Ks}\bo{v})_{K\in\cM,~s\in \mathcal{V}_K},(\I_{\Ke}\bo{v})_{K\in\cM,~e\in \mathcal{E}_K},(\I_{\cK\sigma}\bo{v})_{K\in\cM,~\sigma \in \cF_{K}},\\ &\qquad
(\krond \I_{\Ksig}^{\bub}\bo{v})_{K\in\cM,~\sigma \in \mathcal F_{\Gamma,K}^+},~
(\I_K\bo{v})_{K\in\cM}\big),
\end{align*}
where
\begin{subequations}\label{eq:DOF.Ih}
\begin{alignat}{2}
\I_{\Ks}\bo{v}&=\bo{v}|_K(\bx_s)&&\qquad\forall s\in\cV_K,~K\in\cM,\notag\\
\I_{\cK e}\bo{v}&=\Pi^{k-1}_e(\bo{v}|_K)&&\qquad\forall e\in\cE_K,~K\in\cM,\notag\\
\I_{\cK\sigma}\bo{v}&=\Pi^{k-1}_\sigma(\bo{v}|_K)&&\qquad\forall \sigma\in\cF_K,~K\in\cM,\notag\\
\I_{\Ksig}^{\bub}\bo{v}&=\Pi^{\ell}_{\sigma}(\bo{v}|_K-\Pi^{k-1}_\sigma(\bo{v}|_K))&&\qquad\forall \sigma \in \mathcal F_{\Gamma,K}^+,~K\in\cM,\notag\\
\I_{K}\bo{v}&=\Pi^{k-1}_K\bo{v} &&\qquad\forall K\in\cM.\notag 
\end{alignat}
\end{subequations}
Next, any $\IhS\bo{v}\in \ddgskS$ admits an analogous decomposition to \eqref{eq:global.decomp}
\begin{align}\label{eq:IhS.decomposition}
\IhS\bo{v}:=\underline{\mathbb I}_h\bo{v} + \underline{\mathbb I}_h^{\bub}\bo{v}\qquad\forall \bo{v} \in \boldsymbol{C}^0_0(\overline{\O}\setminus\Gamma).
\end{align}

\subsection{Discrete operators}\label{discrete_opeators}
For each edge $e\in\cE_K$, define the local edge reconstruction as 
$\Upsilon_{Ke}^{k+1}:\ddgsk\rightarrow \Poly{k+1}(e)^3$ such that
\begin{equation}\label{eq:def.UpS.Ke}
\Upsilon_{Ke}^{k+1}\underline{\bv}_h(\bx_s) = \bo{v}_{\Ks} \quad \forall s\in\cV_e\,,\quad
\Pi^{k-1}_e\Upsilon_{Ke}^{k+1}\underline{\bv}_h=\bo{v}_{\Ke}.
\end{equation}
Next, for any face $\sigma\in\cF_K$, we introduce the local face gradient reconstruction
$\mathbb{G}^{k}_{\Ksig}:\ddgsk\rightarrow \Poly{k}(\sigma)^{3\times 2}$ and face displacement 
reconstruction $\Upsilon^{k+1}_{\Ksig}:\ddgsk\rightarrow \Poly{k+1}(\sigma)^3$ in the following 
manner: 
\begin{equation*}
\int_\sigma \mathbb{G}^{k}_{\Ksig}\uvec{v}_h : \bos{\xi} = -\int_\sigma \bo{v}_{\cK\sigma} \cdot \DIV(\bos{\xi}) 
+ \sum_{e\in\cE_\sigma} \int_e \Upsilon^{k+1}_{Ke}\uvec{v}_h \cdot \bos{\xi}{\bf n}_{\sigma e} \qquad \forall \bos{\xi}\in\Poly{k}(\sigma)^{3\times 2},
\end{equation*}
and
\begin{equation}\label{eq:Up.face}
\int_\sigma \Upsilon^{k+1}_{\Ksig}\uvec{v}_h \cdot \DIV(\bos{\eta}) = -\int_\sigma \mathbb{G}^{k}_{\Ksig}\uvec{v}_h : \bos{\eta} + \sum_{e\in\cE_\sigma} \int_e \Upsilon^{k+1}_{Ke}\uvec{v}_h\cdot \bos{\eta}{\bf n}_{\sigma e}\quad \forall \bos{\eta}\in \big[(\bx-{\bx}_\sigma)\Poly{k+1}(\sigma)\big]^3,
\end{equation}
where $\bx_\sigma$ is such that $\sigma$ contains a disc centered at $\x_\sigma$ and of radius $\gtrsim h_\sigma$, $\big[(\bx-{\bx}_\sigma)\Poly{k+1}(\sigma)\big]^3$ is considered as a space of $3\times 2$ matrices, and the divergences of matrices are taken row-wise.

For all elements $K\in\cells,$ we construct a second face potential $\UpS{\Ksig}$ by adding, for faces $\sigma\in\faces_{\Gamma,K}^+$ of $K$ on the positive side of the fracture, a  correction to $\Upsilon_{\Ksig}^{k+1}$:
\begin{align}\label{eq:UpS.face}
\UpS{\Ksig}\uS{v}= \left\{\begin{array}{ll}
  \Upsilon^{k+1}_{\Ksig}\uvec{v}_h + \krond\Big[\bo{v}_{\cK\sigma} + \bo{v}^{\bub}_{\Ksig} -\Pi^\ell_{\sigma}\Upsilon^{k+1}_{\Ksig}\uvec{v}_h\Big]&\text{ if $\sigma\in\faces_{\Gamma,K}^+$},\\
  \Upsilon^{k+1}_{\Ksig}\uvec{v}_h &\text{ if $\sigma\notin\faces_{\Gamma,K}^+$.}
  \end{array}\right.
\end{align}
Note that if $\ell\not=k$, then we have 
 \begin{equation}\label{eq:UpS.face_1}
 \UpS{\Ksig}\uS{v}=\Upsilon^{k+1}_{\Ksig}\uvec{v}_h.
\end{equation}
Then, for any element $K\in\cM$, we introduce the enriched local element  gradient reconstruction $\GS{K} : \ddgskS\rightarrow \Poly{k}(K)^{3\times 3}$ and the enriched element displacement reconstruction $\UpS{K} :\ddgskS\rightarrow \Poly{k+1}(K)^3$ in the following manner:
\begin{equation}\label{eq:G1K_enhance}
\int_K \GS{K}\uS{v} : \bos{\xi} = -\int_K \bo{v}_K \cdot \DIV(\bos{\xi}) + \sum_{\sigma \in \cF_{K}} \int_\sigma \UpS{\Ksig} \uS{v} \cdot \bos{\xi}{\bf n}_{\Ksig}\quad \forall \bos{\xi}\in\Poly{k}(K)^{3\times 3},
\end{equation}
and
\begin{equation*}
\int_K \UpS{K}\uS{v} \cdot \DIV(\bos{\eta}) = -\int_K \GS{K}\uS{v}: \bos{\eta} + \sum_{\sigma\in\cF_K} \int_\sigma \UpS{\Ksig}\uS{v} \cdot \bos{\eta}{\bf n}_{\Ksig} \quad \forall \bos{\eta}\in\big[(\bx-\bx_K)\Poly{k+1}(K)\big]^3,
\end{equation*}
where $\bx_K$ is a point such that $K$ contains a ball of center $\bx_K$ and radius $\gtrsim h_K$.

Let $\sigma\in\cF_{\Gamma}$ be a fracture face, and $K$~(resp.~$L$) is the cell on the positive~(resp.~negative) side of $\Gamma$. We  define the discrete displacement jump operator on $\sigma$ as $\Jsig{\cdot}: \ddgskS \to \Poly{k+1}(\sigma)^3$ such that, for all $\uS{v}\in \ddgskS$,
\begin{align} 
\Jsig{\uS{v}}:=\UpS{\Ksig}\uS{v} - \UpS{\Lsig}\uS{v}=\Upsilon_{\Ksig}^{k+1}\uvec{v}_{h} - \Upsilon_{\Lsig}^{k+1}\uvec{v}_{h}+\krond\Big[\bo{v}_{\cK\sigma}+\bo{v}^{\bub}_{\Ksig}-\Pi_{\sigma}^{\ell}\Upsilon_{\Ksig}^{k+1}\uvec{v}_{h}\Big].\label{eq:jump.n}
\end{align}
The normal and tangential jump of this operator are denoted by $\Jn{\uS{v}}=\Jsig{\uS{v}}\cdot \normal^+$ and $\jump{\uS{v}}_{\sigma,\tang}=\Jsig{\uS{v}}-\Jn{\uS{v}}\normal^{+}$, respectively.

These local gradients, displacement reconstructions, and jumps are patched together to create 
their global piecewise polynomial counterparts
\begin{equation*}
\GS{h} \colon \ddgskS \rightarrow \Poly{k}(\cM)^{3\times 3}, \quad
\UpS{h} \colon \ddgskS \rightarrow \Poly{k+1}(\cM)^3,
\quad \djumph{\cdot} \colon \ddgskS \rightarrow \Poly{k+1}(\cF_\Gamma)^3
\end{equation*}
by setting, for all $\uS{v}\in\ddgskS$,
 \begin{alignat*}{2}
\big(\GS{h}\uS{v}\big)_{|_K} & = \GS{K}\uS{v}
&&\qquad \forall\,K\in\mathcal{M}, \\
\big(\UpS{h}\uS{v}\big)_{|_K} &= \UpS{K}\uS{v}
&&\qquad \forall\,K\in\mathcal{M},\\
\big(\djumph{\uS{v}}\big)_{|_\sigma} 
& = \Jsig{\uS{v}}
&&\qquad \forall\,\sigma\in\mathcal{F}_\Gamma.
\end{alignat*}
Moreover, the discrete symmetric gradient $\bbeps_{h}$, divergence $\divScal_{h}$, and stress tensor $\bbsigma_{h}$ are deduced from the previous operators: with $\tr$ the matrix trace operator,
\begin{equation}
\bbeps_{h}:=\frac{1}{2}\left(\GS{h} + (\GS{h})^{T}\right),\quad\divScal_{h}:= \tr(\bbeps_{h}),\quad
\bbsigma_{h}(\cdot):= \frac{E}{1+\nu}\left(\bbeps_{h}(\cdot)+\frac{\nu}{1-2\nu}\divScal_{h}(\cdot)\Id \right).\label{eq:def.sigma.h}
\end{equation}

\subsection{Relation to the standard DDR framework}\label{subsec:rela.stand.ddr}
If $\ell\neq k$, then $\krond=0$ and no bubble component is present. The space $\dgskS$ then coincides with the vector version of the standard discrete gradient space $\dgsk$ of \cite[Section~3.1]{ddr-variant}. When $\ell=k$, bubble DOFs are active, and $\dgskS$ is a strict enrichment of $\dgsk$; see the decomposition in \eqref{eq:global.decomp} (note that, in both cases, $\uvec{v}_h \in \dgsk$). 

Regarding the reconstruction operators, when $\ell\neq k$ or $\sigma \notin \cF_{\Gamma, K}^{+}$, the bubble correction is absent, i.e., $\krond=0$ or the face does not lie on the positive side of $\Gamma$, so that $\UpS{\Ksig}\uS{v}=\Upsilon_{\Ksig}^{k+1}\uvec{v}_{h}$; see \eqref{eq:UpS.face_1}. Consequently, if $K$ is such that $\faces_{\Gamma,K}^+=\emptyset$ the local enriched cell operators reduce to their standard DDR counterparts: $\GS{K}\uS{v}=\mathbb{G}^k_K\uvec{v}_h$ and $\UpS{K}\uS{v}=\Upsilon^{k+1}_K\uvec{v}_h$ for all $\uS{v}\in\ddgskS$, as introduced in~\cite[Section~3.3.1]{ddr-variant}. When $\ell=k$ and 
$\faces_{\Gamma,K}^+\neq\emptyset$ (i.e., bubble DOFs are active on $K$), the operators $\GS{K}$ and $\UpS{K}$ are enriched versions of $\mathbb{G}^{k}_{K}$ and $\Upsilon^{k+1}_{K}$, respectively; the enrichment is reflected in the additional correction term added to $\Upsilon_{\Ksig}^{k+1}$ yielding $\UpS{\Ksig}$; see \eqref{eq:UpS.face}.

\subsection{Augmented Lagrangian DDR scheme}\label{Dis_bfm}
Throughout the analysis, we assume that $\gamma_{h} =\gamma_{0}h_{\Gamma},$  where $\gamma_{0}\in \mathbb{R}^{+}.$ We now introduce the numerical scheme for the augmented Lagrangian DDR formulation of the contact mechanics problem \eqref{Form1_nonlinearform}: Find $(\uS{u},\laf_{h})\in \ddgskS\times \Ms$ such that, for all $(\uS{v},\bos{\mu}_h)\in \ddgskS\times \Ms$, 
\begin{equation}\label{eq:scheme}
\mathbb{A}_{h}((\uS{u},\laf_{h}); (\uS{v},\bos{\mu}_h))=\sum_{K\in\cM}\int_K {\bf f}\cdot\bo{v}_K,
\end{equation}
where the bilinear form $\mathbb{A}_{h}:\left(\ddgskS\times \Ms\right) \times \left(\ddgskS\times \Ms\right) \to \mathbb{R}$ is defined as
\begin{equation}\label{eq:def.bilnear.form}
\mathbb{A}_{h}((\uS{u},\laf_{h}); (\uS{v},\bos{\mu}_h)):=\int_{\Omega}\bbsigma_h(\uS{u})\colon\bbeps_h(\uS{v})+  \stabc{\uS{u}}{\uS{v}} + \mathbb{C}_{h}((\uS{u},\laf_{h}); (\uS{v},\bos{\mu}_h)).
\end{equation}
The non-linear contact operator $\mathbb{C}_{h}$ 
is defined as
\begin{equation}\label{form1_ch}
\begin{aligned}
\mathbb{C}_{h}((\uS{u},\laf_{h}); (\uS{v},\bos{\mu}_h)):={}& 
\int_{\Gamma}\gamma^{-1}_h\bPg{\djumph{\uS{u}}}{\laf_h}\cdot\Big[ \djumph{\uS{v}}-\gamma_{h} \bos{\mu}_{h}\Big] + \int_{\Gamma} \gamma_{h} \laf_{h}\cdot \bos{\mu}_{h} \\
&+ \mathbb{S}_{\Gamma}(\laf_{h},\bos{\mu}_{h}).
\end{aligned}
\end{equation}
The stabilisation term $\mathbb{S}_{\Gamma}$ is defined by penalizing the jumps of the multiplier variable across the interior edges of the contact faces, following the approach introduced in \cite{burman2010fictitious,burman2010interior}:
\begin{equation}\label{def.S.Gamma}
\mathbb{S}_{\Gamma}(\laf_{h},\bos{\mu}_{h}):= \sum_{e\in \edges_{\Gamma}^{\text{int}}}\delta \gamma_{h} \int_{e}h_{e} \jump{{\laf}_{h}}_{e}\jump{\bos{\mu}_{h}}_{e},
\end{equation}
where $\delta >0$ is a stabilisation parameter, $\jump{\mathbf{x}}_{e}$ denotes the jump of the quantity $\mathbf{x}$ across the edge $e$ (the orientation of the jump is irrelevant) and $\edges_{\Gamma}^{\text{int}}$ is the set of interior edges associated with the faces in  $\faces_{\Gamma}.$ The stabilisation term $\mathbb{S}_{\Gamma}$ is introduced to ensure the optimal convergence rate of the discrete scheme \eqref{eq:scheme}.

The stabilisation bilinear form $\stabc{\cdot}{\cdot}$ is defined by
\begin{equation*}
\stabc{\uS{u}}{\uS{v}}\coloneqq \sum_{K\in\cM}\zeta(E_K,\nu_K)\mathbb S_K(\uS{u},\uS{v})
\end{equation*}
with local stabilisation bilinear form $\mathbb S_K:\ddgskS\times\ddgskS\rightarrow \mathbb R$  given by
\begin{equation}\label{eq:stabK}
\begin{aligned}
\mathbb S_{K}{}&(\uS{u},\uS{v}):=h^{-1}_K\sum_{\sigma\in\cF_K}
\int_\sigma \left(\UpS{K}\uS{u}-{\Upsilon}^{k+1}_{\Ksig}\uvec{u}_{h}\right)\cdot\left(\UpS{K}\uS{v}-{\Upsilon}^{k+1}_{\Ksig}\bo{v}_h\right)\\&+\sum_{e\in\cE_K}\int_e\left(\UpS{K}\uS{u}-\Upsilon^{k+1}_{K e}\uvec{u}_{h}\right)\cdot\left(\UpS{K}\uS{v}-\Upsilon^{k+1}_{Ke}\uvec{v}_{h}\right)\\
&+h_K\sum_{s\in\mathcal V_K}\left(\UpS{K}\uS{u}(\bx_s)-\bo{u}_\Ks\right)\cdot\left(\UpS{K}\uS{v}(\bx_s)-\bo{v}_\Ks\right)\\&
+ h^{-1}_{K}\krond\sum_{\sigma\in \mathcal F_{\Gamma,K}^+} \int_\sigma \left(\bo{u}_{\cK\sigma} + \bo{u}^{\bub}_{\Ksig} -\Pi^\ell_{\sigma}\Upsilon^{k+1}_{\Ksig}\uvec{u}_h\right)\cdot \left(\bo{v}_{\cK\sigma} + \bo{v}^{\bub}_{\Ksig} -\Pi^\ell_{\sigma}\Upsilon^{k+1}_{\Ksig}\uvec{v}_h\right).
\end{aligned}
\end{equation}
Above, for each $K\in\cells$, $E_K$ and $\nu_K$ respectively denote the average of $E$ and $\nu$ on $K$, and $\zeta$ is such that $\zeta(E_{K},\nu_{K})>\mu_{1}> 0$ for all $K\in \cells$ and scales as $\frac{E_{K}}{1-2\nu_{K}}$ (or, equivalently, as the sum of the cell-averaged Lam\'e coefficients).

Let us also introduce the unscaled stabilisation bilinear form
\begin{equation*}
		 \stabin{\uS{u}}{\uS{v}}:= \sum_{K\in\cells} S_K(\uS{u},\uS{v}). 
\end{equation*}

\begin{proposition}[Existence and uniqueness of the solution to the scheme]\label{wellposedness}
There exists a unique solution $(\uS{u},\laf_{h}) \in\ddgskS\times \Ms$ \
to \eqref{eq:scheme}. 
\end{proposition}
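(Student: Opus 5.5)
We write the scheme as a nonlinear operator equation on the finite-dimensional space $X_h:=\ddgskS\times\Ms$, equipped with the norm
\[
\|(\uS{v},\bos{\mu}_h)\|_X^2:=\|\bbeps_h(\uS{v})\|_{L^2(\O)}^2+\stabin{\uS{v}}{\uS{v}}+\gamma_h\|\bos{\mu}_h\|_{L^2(\Gamma)}^2 .
\]
The discrete Korn inequality established above shows that the first two terms control a norm on $\ddgskS$, so $\|\cdot\|_X$ is a norm on $X_h$. We define $\mathcal{A}_h:X_h\to X_h'$ by $\langle\mathcal{A}_h(\uS{u},\laf_h),(\uS{v},\bos{\mu}_h)\rangle:=\mathbb{A}_h((\uS{u},\laf_h);(\uS{v},\bos{\mu}_h))$. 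Existence and uniqueness will both follow from a strong monotonicity property of $\mathcal{A}_h$, up to a test-function modification. A direct monotonicity argument fails: using the firm nonexpansiveness of the projection $\bPg{\cdot}{\cdot}$ (which is Lipschitz with constant $1$ and monotone in the augmented variable $\djumph{\uS{u}}+\gamma_h\laf_h$), the contact part only gives
\[
\mathbb{C}_h(U_1;U_1-U_2)-\mathbb{C}_h(U_2;U_1-U_2)\ \ge\ \gamma_h^{-1}\|\bPg{}{}_1-\bPg{}{}_2\|^2-\ldots,
\]
with a loss of control on $\laf_h$ that is the usual difficulty of augmented Lagrangian formulations.

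\textbf{Step 1: monotonicity of the contact part.} For $U_i=(\uS{u}_i,\laf_i)$, set $\delta\bo{P}=\bPg{\djumph{\uS{u}_1}}{\laf_1}-\bPg{\djumph{\uS{u}_2}}{\laf_2}$, $\delta\bo{j}=\djumph{\uS{u}_1-\uS{u}_2}$ and $\delta\laf=\laf_1-\laf_2$. Expanding $\mathbb{C}_h$ gives
\[
\int_\Gamma \gamma_h^{-1}\delta\bo{P}\cdot(\delta\bo{j}-\gamma_h\delta\laf)+\gamma_h\|\delta\laf\|^2+\mathbb{S}_\Gamma(\delta\laf,\delta\laf).
\]
Writing $\delta\bo{j}-\gamma_h\delta\laf=(\delta\bo{j}+\gamma_h\delta\laf)-2\gamma_h\delta\laf$, firm nonexpansiveness gives $\delta\bo{P}\cdot(\delta\bo{j}+\gamma_h\delta\laf)\ge|\delta\bo{P}|^2$. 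Completing the square then yields the lower bound
\[
\gamma_h^{-1}\|\delta\bo{P}-\gamma_h\delta\laf\|^2+\mathbb{S}_\Gamma(\delta\laf,\delta\laf)\ \ge 0 .
\]
Combined with the coercivity of the elastic part (discrete Korn), this controls $\|\bbeps_h\|^2+|\cdot|_{S}^2$ of $\uS{u}_1-\uS{u}_2$, but not $\delta\laf$ directly.

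\textbf{Step 2: recovering $\laf_h$ via the discrete lifting.} This is the main obstacle. We will use the discrete lifting of Lemma~\ref{rem:lifting}: for each $\bos{\mu}_h\in\Ms$ there exists $\uS{w}\in\ddgskS$ with $\Pi^\ell_{\faces_\Gamma}\djumph{\uS{w}}=\gamma_h\bos{\mu}_h$ (or at least $\int_\Gamma\djumph{\uS{w}}\cdot\bos{\mu}_h\gtrsim\gamma_h\|\bos{\mu}_h\|^2$) and $\|\uS{w}\|$ controlled by $h_\Gamma^{1/2}\gamma_h^{1/2}\|\bos{\mu}_h\|$; here the quasi-uniformity \eqref{eq:quasi.uniform.fracture} and the choice $\gamma_h=\gamma_0h_\Gamma$ are used. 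We will test with $(\uS{u}_1-\uS{u}_2,\delta\laf)+\epsilon(\uS{w},0)$, where $\uS{w}$ lifts $\delta\laf$. The first equation then contributes $\int_\Gamma\gamma_h^{-1}\delta\bo{P}\cdot\djumph{\uS{w}}$. Splitting $\delta\bo{P}=(\delta\bo{P}-\gamma_h\delta\laf)+\gamma_h\delta\laf$ recovers $\epsilon\gamma_h\|\delta\laf\|^2$ plus cross terms, which are bounded by Young's inequality using the Step~1 quantity and the elastic term (bounded through boundedness of the lifting and of $\bbsigma_h$). Choosing $\epsilon$ small gives
\[
\langle\mathcal{A}_hU_1-\mathcal{A}_hU_2,\mathcal{T}(U_1-U_2)\rangle\gtrsim\|U_1-U_2\|_X^2,
\]
with $\mathcal{T}$ linear, bounded and invertible. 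Uniqueness follows immediately by taking $U_1,U_2$ to be two solutions.

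\textbf{Step 3: existence.} With $U_2=0$ (noting $\bPg{0}{0}=0$), the same estimate gives coercivity: $\langle\mathcal{A}_hU,\mathcal{T}U\rangle\gtrsim\|U\|_X^2-C\|\mathbf f\|\,\|U\|_X$ on large spheres. Since $\mathcal{A}_h$ is continuous (the projections are Lipschitz) and $X_h$ is finite-dimensional, a Brouwer fixed-point corollary applied to $\mathcal{T}^\ast\mathcal{A}_h-F$ yields a zero. Alternatively, strong monotonicity together with Lipschitz continuity gives existence by the Browder–Minty theorem (or a contraction argument). The delicate part throughout is Step~2: checking that the lifting's norm scaling matches $\gamma_h\sim h_\Gamma$ so that the constants are uniform. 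For mere existence and uniqueness, however, uniformity is not needed, and fixed-$h$ constants suffice.
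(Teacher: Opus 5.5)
Your proposal is correct, but it organises the argument differently from the paper. The paper proves existence by a topological-degree argument. It deforms the scheme ($\rho=1$) into the invertible affine map obtained by replacing $\mathbb{C}_h$ with $\int_\Gamma\gamma_h\bo{r}_h\cdot\bos{\mu}_h$ ($\rho=0$). The $\rho$-uniform a priori bound comes from testing with $(\uS{w}-\alpha\RhS,\bo{r}_h)$, i.e.\ from Lemma~\ref{lem:stability_results}, which is exactly your Step~2 estimate with $U_2=0$. Uniqueness is then proved separately, in two stages. Testing the difference of two solutions with $(\uS{w},\bo{r}_h)$ gives $\uS{w}=0$ and $\gamma_h\bo{r}_h=\delta\bo{P}$. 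Only afterwards does a second test with $(\RhS,\bo{0}_h)$, where $\RhS$ lifts $\gamma_h\bo{r}_h$, give $\bo{r}_h=\bo{0}$.

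You instead prove the stability estimate once, for differences, i.e.\ strong monotonicity of $\mathcal{T}^\ast\mathcal{A}_h$. This yields uniqueness in one line, existence through the acute-angle corollary of Brouwer's theorem (or Browder--Minty) without any homotopy, and, as a by-product, Lipschitz dependence of the discrete solution on the load. It costs two things. You need the difference form of the monotonicity inequality, which Lemma~\ref{lem:mono.prop} provides. You also need a \emph{linear} lifting, so that $\mathcal{T}$ is linear and invertible; the construction \eqref{eq:Rh.construction} is linear in $\bos{\mu}_h$, and at fixed $h$ any linear right inverse of $\uS{v}\mapsto\Pi^\ell_{\faces_\Gamma}\djumph{\uS{v}}$ would do. The degree argument, by contrast, only needs a bound on solutions measured against the zero state. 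It therefore remains usable when monotonicity of differences is lost (e.g.\ a solution-dependent friction threshold), at the price of a separate uniqueness proof.

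Two small slips, both harmless at fixed $h$ as you note. First, lifting $\gamma_h\bos{\mu}_h$, Lemma~\ref{rem:lifting} gives $\NORM{1,h}{\uS{w}}\le C_{\mathrm{R}}h_\Gamma^{-1/2}\gamma_h\NORM{L^2(\Gamma)}{\bos{\mu}_h}=C_{\mathrm{R}}\gamma_0^{1/2}\gamma_h^{1/2}\NORM{L^2(\Gamma)}{\bos{\mu}_h}$, not a bound in $h_\Gamma^{1/2}\gamma_h^{1/2}$. Second, the cross term $\int_\Gamma\gamma_h^{-1}(\delta\bo{P}-\gamma_h\delta\laf)\cdot\djumph{\uS{w}}$ is controlled uniformly through \eqref{eq:prop.Rh.norm.gamma}, a property of the lifting you did not list.
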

\begin{proof} See Section \ref{sec:proof.existence.uniqueness}.
\end{proof}

\subsection{Discrete norms}

We now define the following
$H^{1}$-like norm on $\ddgskS$ as: For all $\uS{v}\in \ddgskS$,
\begin{equation*}
\lVert \uS{v}\rVert_{1,h}\coloneqq \left(\sum_{K\in\cM}\lVert \uS{v}\rVert^2_{1,K}\right)^{\frac{1}{2}} \text{ with }\lVert\uS{v}\rVert^2_{1,K}\coloneqq \lVert\GS{K}\uS{v} \rVert^2_{L^2(K)}+\mathbb S_K(\uS{v},\uS{v}).
\end{equation*}

\begin{theorem}[Discrete Korn Inequality]\label{thm:korn}
It holds
\begin{equation}\label{eq:dis_korn}
\lVert \uS{v}\rVert_{1,h}^2\lesssim \lVert \bbeps_h(\uS{v})\rVert^2_{L^2(\O\setminus\overline{\Gamma})}+\mathbb S_h(\uS{v},\uS{v})\qquad\forall\uS{v}\in\ddgskS.
\end{equation}
\end{theorem}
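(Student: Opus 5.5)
Since $\lVert\uS{v}\rVert_{1,h}^2=\lVert\GS{h}\uS{v}\rVert^2_{L^2}+\mathbb S_h(\uS{v},\uS{v})$, it suffices to prove
\[
\lVert\GS{h}\uS{v}\rVert_{L^2(\O\setminus\overline\Gamma)}\lesssim \lVert\bbeps_h(\uS{v})\rVert_{L^2(\O\setminus\overline\Gamma)}+\mathbb S_h(\uS{v},\uS{v})^{1/2}.
\]
The plan is to compare $\GS{h}\uS{v}$ with the broken gradient of the cellwise potential $\UpS{h}\uS{v}$, and then apply a continuous Korn inequality for piecewise $H^1$ functions on the fractured domain. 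The geometric assumption that every connected component of $\O\setminus\Gamma$ touches $\partial\O$ on a set of positive measure is what prevents rigid motions from escaping. Throughout I set $\bo{w}_h:=\UpS{h}\uS{v}\in\Poly{k+1}(\cM)^3$.

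\textbf{Step 1: local consistency.} On each $K$, I would integrate by parts in \eqref{eq:G1K_enhance} with $\bos{\xi}$ restricted to $\Poly{k}(K)^{3\times 3}$, using the definition of $\UpS{K}$ and the polynomial consistency of the DDR reconstructions. Together with discrete trace/inverse inequalities, this should give
\[
\lVert\GS{K}\uS{v}-\nabla\bo{w}_h\rVert_{L^2(K)}\lesssim h_K^{-1/2}\Big(\sum_{\sigma\in\faces_K}\lVert\UpS{K}\uS{v}-\UpS{\Ksig}\uS{v}\rVert_{L^2(\sigma)}^2\Big)^{1/2}\lesssim \mathbb S_K(\uS{v},\uS{v})^{1/2}.
\]
The difference $\UpS{\Ksig}-\Upsilon^{k+1}_{\Ksig}$ is exactly the bubble term penalised by the last line of \eqref{eq:stabK}, so the face differences are controlled by the first and last lines of $\mathbb S_K$. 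Taking symmetric parts yields $\lVert\bbeps_h(\uS{v})-\bbeps(\bo{w}_h)\rVert\lesssim \mathbb S_h^{1/2}$ cellwise as well. It therefore remains to bound $\lVert\nabla_h\bo{w}_h\rVert$ by $\lVert\bbeps_h(\bo{w}_h)\rVert$ plus stabilisation.

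\textbf{Step 2: broken Korn inequality.} I would invoke a Korn inequality for piecewise $H^1$ functions (Brenner's type), adapted to the domain $\O\setminus\overline\Gamma$:
\[
\lVert\nabla_h\bo{w}\rVert^2\lesssim \lVert\bbeps_h(\bo{w})\rVert^2+\sum_{\sigma\in\faces^{\rm int}\setminus\faces_\Gamma}h_\sigma^{-1}\lVert\Pi^1_\sigma\jump{\bo{w}}_\sigma\rVert^2_{L^2(\sigma)}+\sum_{\sigma\in\faces^{\rm ext}}h_\sigma^{-1}\lVert\Pi^1_\sigma\bo{w}\rVert^2_{L^2(\sigma)}.
\]
Here the jump terms are taken only on non-fracture faces, and the boundary term fixes the rigid motions on each component. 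Brenner's result applies to each connected component of $\O\setminus\overline\Gamma$, using the geometric assumption; fracture tips could create non-Lipschitz components, and in that case I would rely on a known extension of the result or on a partition into Lipschitz subdomains.

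\textbf{Step 3: bounding the jump terms.} On an interior non-fracture face shared by $K$ and $L$, we have $\UpS{\Ksig}\uS{v}=\Upsilon^{k+1}_{\Ksig}\uvec{v}_h=\Upsilon^{k+1}_{\Lsig}\uvec{v}_h$, because the face unknowns are single-valued. Indeed, $\Kz=\Lz$ there, and even for a face of $K$ touching $\Gamma$ only along an edge the face is not a fracture face, so no bubble correction applies. Hence
\[
\jump{\bo{w}_h}_\sigma=(\UpS{K}-\Upsilon_{\Ksig}^{k+1})-(\UpS{L}-\Upsilon^{k+1}_{\Lsig}),
\]
and $h_\sigma^{-1}\lVert\jump{\bo{w}_h}\rVert_\sigma^2\lesssim \mathbb S_K+\mathbb S_L$. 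Boundary faces are handled in the same way, since the face potential vanishes there by the boundary condition in $\ddgskS$.

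The main obstacle is Step 2. One must make sure the broken Korn inequality holds with jumps omitted across $\Gamma$, including fracture tips and intersections, and that the stabilisation carries no bubble-related inconsistency. If a direct citation is unavailable, I would fall back on a compactness argument over the finite-dimensional kernel of piecewise rigid motions that have zero non-fracture jumps and vanish on $\partial\O$; this kernel is trivial by the connectivity assumption. Combining Steps 1–3 with the triangle inequality then gives \eqref{eq:dis_korn}.
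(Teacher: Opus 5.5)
Your Steps 1 and 3 are correct. In Step 1 the cell term drops out because $\Pi^{k-1}_K\UpS{K}\uS{v}=\bo{v}_K$, which you get by testing the definitions of $\GS{K}$ and $\UpS{K}$ with matrices whose rows lie in $(\bx-\bx_K)\Poly{k-1}(K)$; polynomial consistency is not what is used there. The face differences are then controlled by the first and last lines of $\mathbb S_K$, as you say. In Step 3 the two face potentials on a non-fracture face coincide, and they vanish on boundary faces.

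The gap is Step 2. It is the only place where the fracture geometry and the assumption that every component of $\O\setminus\Gamma$ touches $\partial\O$ enter, and you assert it rather than prove it. Brenner's broken Korn inequality is stated on a fixed Lipschitz polyhedral domain, with jump terms on all interior faces. It therefore applies componentwise only when the fractures cut $\O$ into polyhedral pieces.

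With immersed fractures (tips inside $\O$, which the setting allows), the components are cracked, non-Lipschitz domains. A fixed partition of them into Lipschitz pieces is not resolved by the mesh, which conforms only to $\Gamma$, so Brenner cannot be applied piece by piece with an $h$-independent constant.

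Your fallback does not repair this. Showing that the only piecewise rigid motions with zero non-fracture jumps and zero boundary values vanish proves that the right-hand side is a norm on each fixed space $\Poly{k+1}(\cM)^3$. The resulting equivalence constant depends on the mesh, and since Korn's inequality is global, no kernel argument of this kind gives uniformity in $h$.

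The step can be closed by keeping compactness at the continuous level.

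1. On a simplicial submesh, define an averaging operator $E_h$. At each Lagrange node it takes the mean of the values of $\bo{w}_h=\UpS{h}\uS{v}$ only over the cells of the class $\Kz$, where $z$ is the mesh entity containing the node in its relative interior; it is set to zero on $\partial\O$.
2. Then $E_h\bo{w}_h\in\bo{U}_0$: it is continuous across non-fracture faces and free to jump across $\faces_\Gamma$.
3. The cells of one class are linked through non-fracture faces containing $z$, which is what the definition of $\Kz$ expresses. Local inverse inequalities then give
\[
\|\nabla_h(\bo{w}_h-E_h\bo{w}_h)\|_{L^2(\O)}^2\lesssim\sum_{\sigma\in\faces^{\mathrm{int}}\setminus\faces_\Gamma}h_\sigma^{-1}\|\jump{\bo{w}_h}_\sigma\|_{L^2(\sigma)}^2+\sum_{\sigma\in\faces^{\mathrm{ext}}}h_\sigma^{-1}\|\bo{w}_h\|_{L^2(\sigma)}^2 .
\]
Your Step 3 already bounds the right-hand side by $\mathbb S_h(\uS{v},\uS{v})$; full jumps suffice and no $\Pi^1_\sigma$ is needed.
4. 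Finally, apply the continuous Korn inequality $\|\nabla\bo{z}\|_{L^2(\O\setminus\overline\Gamma)}\lesssim\|\tfrac12(\nabla\bo{z}+\nabla\bo{z}^{T})\|_{L^2(\O\setminus\overline\Gamma)}$ on $\bo{U}_0$. This is proved once, by Rellich compactness on a fixed Lipschitz partition of each component (traces match across the artificial cuts) together with the connectivity assumption, so its constant is mesh-independent.

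For comparison, the paper does not reprove any of this. It transfers the discrete Korn inequalities of \cite[Theorem~3.1]{DKMR25} ($k\ge1$) and \cite[Theorem~7]{droniou2025analysis} ($k=0$), replacing $\bo{v}_{\cK\sigma}$ with $\bo{v}_{\cK\sigma}+\krond\bo{v}^\bub_{K\sigma}$ on $\faces^+_{\Gamma,K}$. The fractured-domain ingredient is thus inherited from those works. Your treatment of the bubbles through the last line of $\mathbb S_K$ is a clean, degree-uniform alternative to that substitution. Step 2, however, must be supplied, either as above or by a citation that explicitly covers immersed fractures.
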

\begin{proof}
In the lowest-order case~(i.e., $k=0$), the result follows by arguments
analogous to those in \cite[Theorem~7]{droniou2025analysis}. For
higher-order approximations~(i.e., $k\geq 1$), the proof proceeds along the
same lines as in \cite[Theorem~3.1]{DKMR25}, with the face unknown
$\bo{v}_{\cK\sigma}$ used therein replaced, on $\sigma\in\F^+_{\Gamma,K}$, by
$\bo{v}_{\cK\sigma}+\krond\bo{v}^\bub_{K\sigma}$.
\end{proof}

We now introduce the discrete energy inner product $\langle \cdot,\cdot\rangle_{\mathrm{en},h}$ such that, for all $\uS{u},\uS{v}\in \ddgskS$,
 \begin{equation} \label{eq:innerproduct}
 \langle \uS{v} , \uS{w} \rangle_{\mathrm{en},h}:= \int_{\Omega}\bbsigma_h(\uS{v})\colon\bbeps_h(\uS{w})+\mu_{1}\mathbb S_{h}(\uS{v},\uS{w}) \quad \forall \uS{v} , \uS{w} \in \ddgskS.
 \end{equation}
The energy norm associated to this inner-product is denoted by $\NORM{\mathrm{en},h}{\uS{v}}$.
 We note that, by  the discrete Korn inequality \eqref{eq:dis_korn} and the definition \eqref{eq:def.sigma.h} of $\bbsigma_h$,
\begin{equation}\label{eq:norm_energy}
\lVert \uS{v}\rVert_{1,h}^2 \simeq \lVert \uS{v}\rVert_{\mathrm{en},h}^2,
\end{equation}
which shows that $\langle \cdot , \cdot \rangle_{\mathrm{en},h}$ is, indeed, an inner product.
Here the hidden constant in $\simeq$ depends only on $\Omega$, $\Gamma$, on the mesh regularity parameter, and on the Young modulus $E$ and Poisson ratio $\nu.$

\section{Main results}\label{Sec:main_results}
The following two quantities, inspired by the Gradient Discretisation Method \cite{gdm}, will be instrumental to state the error estimates.

\begin{itemize}
\item The primal consistency error is: For $\uS{v}\in\ddgskS$,
\begin{equation}\label{eq:primal.consist}
\bos{\fC}_h(\bo{u},\uS{v})\coloneqq \left(\lVert \nabla \bo{u}-\GS{h}\uS{v}\rVert_{L^2(\O\setminus\overline{\Gamma})}^2+\mathbb S_h(\uS{v},\uS{v})\right)^{\frac{1}{2}}.
\end{equation}
 \item Letting
\begin{equation}\label{sigma.space}
\Sigmaspace\coloneqq\{\bbtau\in H_{\DIV}(\O\setminus\overline{\Gamma};\mathcal{S}^3(\mathbb R))\colon~ \gamma^+_{\bo{n}}\bbtau+\gamma^-_{\bo{n}}\bbtau=\bo{0},
~\gamma^+_{\bo{n}}\bbtau\in L^2(\Gamma)\},
\end{equation}
(where $\mathcal S^3(\mathbb R)$ is the space of symmetric $3\times 3$ matrices with real coefficients, and $\gamma^+_{\bo{n}}\bbtau\coloneq (\gamma^+\bbtau) \normal^{+}$), the limit-conformity measure is defined as: For $\bbtau\in\Sigmaspace$,
\begin{equation}\label{eq:adjoint.consist}
  \begin{aligned}
&  \bos{\cW}_h(\bbtau)\coloneqq{} \sup\limits_{\uvec{v}_h\in\ddgskS}\frac{w_h(\bbtau,\uS{v})}{\lVert\uS{v}\rVert_{1,h}}, \\
\text{ with }  w_h(\bbtau,\uS{v})\coloneqq{}& -\int_\O \bbtau\colon \bbeps_h(\uS{v})+\int_\Gamma \gamma^+_{\bo{n}}\bbtau \cdot \djumph{\uS{v}}-\sum_{K\in\cM}\int_{K}\bo{v}_K\cdot\DIV\bbtau,
  \end{aligned}
\end{equation}
where, here and in the following, suprema of quantities involving a division by a norm are implicitly taken by removing the zero vector.
\end{itemize}

\begin{theorem}[Abstract error bounds]\label{thm:abs.err.bound}
 Let $(\bo{u},\laf)$ be the solution of \eqref{Form1_nonlinearform} and assume that $\bu\in \bo{U}_0 \cap \boldsymbol{C}_0^0(\Omega\backslash\overline{\Gamma}).$ 
 Then, the discrete solution $(\uS{u},\laf_{h})$ of \eqref{eq:scheme} satisfies the following error estimate: recalling that $\gamma_{h} =\gamma_{0}h_{\Gamma}$, for sufficiently small $\gamma_{0}\in \mathbb{R}^{+}$ and sufficiently large $\delta\in \mathbb{R}^{+},$  
\begin{multline*}
\NORM{\mathrm{en},h}{\IhS\bu-\uS{u}} 
+\NORM{L^2(\Gamma)}{
     \gamma^{-\frac{1}{2}}_{h}\bPg{\djumph{\uS{u}}}{\laf_h}-\gamma^{\frac{1}{2}}_{h}\laf
   } +\NORM{L^2(\Gamma)}{\gamma^{\frac{1}{2}}_{h}\big(\Pi_{\cF_{\Gamma}}^{\ell}\laf-\laf_h\big)} \\ + \mathbb{S}_{\Gamma}(\Pi_{\cF_{\Gamma}}^{\ell}\laf-\laf_h,\Pi_{\cF_{\Gamma}}^{\ell}\laf-\laf_h)^{\frac{1}{2}}
   \lesssim \bos{\fC}_h(\bu,\IhS\bu)+ \bos{\cW}_h(\bbsigma(\bu)) 
    +\NORM{L^2(\Gamma)}{\gamma^{-\frac{1}{2}}_{h}\big(\jump{\bo{u}}
    -\djumph{\IhS\bo{u}}\big)}
  \\  +\NORM{L^2(\Gamma)}{\gamma^{\frac{1}{2}}_{h}\big(\laf-\Pi_{\cF_{\Gamma}}^{\ell}\laf\big)} 
     +\left(\sum_{K\in \cells}\sum_{\sigma\in\cF_{\Gamma,K}^+}\NORM{L^{2}(\sigma)}{\gamma^{\frac{1}{2}}_{h}\big(\bbsigma(\bo{u})-\Pi_{K}^{k}\bbsigma(\bo{u})\big)}^2\right)^{\frac{1}{2}}+\mathbb{S}_{\Gamma}(\Pi_{\cF_{\Gamma}}^{\ell}\laf,\Pi_{\cF_{\Gamma}}^{\ell}\laf)^{\frac{1}{2}}.
\end{multline*}
\end{theorem}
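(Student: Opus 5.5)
Set $\uS{e}:=\IhS\bu-\uS{u}$ and $\bos{\eta}_h:=\Pi^\ell_{\cF_\Gamma}\laf-\laf_h$. The plan is the usual augmented Lagrangian argument (in the spirit of Burman--Hansbo--Larson), carried out in the GDM-type framework of Section~\ref{Sec:main_results}. It has three ingredients: monotonicity of the projections, an error equation built from $w_h$, and a separate control of $\bos{\eta}_h$ through the discrete lifting.

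\emph{Error equation.} Test the scheme \eqref{eq:scheme} with $(\uS{v},\bos{\mu}_h)$ and subtract an identity satisfied by the exact solution. That identity comes from the definition of $w_h$ in \eqref{eq:adjoint.consist} with $\bbtau=\bbsigma(\bu)\in\Sigmaspace$. Since $-\DIV\bbsigma(\bu)=\mathbf f$ and $\gamma_{\bo n}^+\bbsigma(\bu)=-\laf$, we get
\[
\int_\Omega\bbsigma(\bu):\bbeps_h(\uS{v})+\int_\Gamma\laf\cdot\djumph{\uS{v}}=\sum_K\int_K\mathbf f\cdot\bo v_K-w_h(\bbsigma(\bu),\uS{v}).
\]
We also use that $\laf=\gamma_h^{-1}\bPg{\jump{\bu}}{\laf}$ pointwise; this holds because the projection identities are invariant under the change $\gamma\to\gamma_h$. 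The difference of $\int\bbsigma(\bu):\bbeps_h(\uS v)$ and $\int\bbsigma_h(\IhS\bu):\bbeps_h(\uS v)$, plus the stabilisation, is bounded by $\bos{\fC}_h(\bu,\IhS\bu)\|\uS v\|_{1,h}$ (symmetrise and use \eqref{eq:norm_energy}).

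\emph{Monotonicity step.} Take $\uS{v}=\uS{e}$ and $\bos{\mu}_h=-\bos{\eta}_h$. Write $P=\bPg{\djumph{\uS u}}{\laf_h}$ and $P^\ast=\bPg{\jump{\bu}}{\laf}=\gamma_h\laf$. The contact terms regroup as
\[
\gamma_h^{-1}(P-P^\ast)\cdot\big[(\djumph{\IhS\bu}-\djumph{\uS u})+\gamma_h(\laf_h-\laf)\big]-\gamma_h(\laf_h-\laf)\cdot(\laf_h-\Pi\laf)+\ldots
\]
Use the firm nonexpansiveness of the projection $x\mapsto[x]_+\bo n^++[x_\tang]_{\gamma g}$ on $\R^3$ (valid since $g$ is piecewise constant, hence independent of the argument): $(P-P^\ast)\cdot(a-b)\ge|P-P^\ast|^2$ when $P=\mathrm{proj}(a)$, $P^\ast=\mathrm{proj}(b)$. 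With $a=\djumph{\uS u}+\gamma_h\laf_h$ and $b=\jump{\bu}+\gamma_h\laf$, this produces $\|\gamma_h^{-1/2}(P-\gamma_h\laf)\|^2_{L^2(\Gamma)}$. The price is the consistency remainder $\gamma_h^{-1}(P-P^\ast)\cdot(\djumph{\IhS\bu}-\jump{\bu})$, which Young's inequality absorbs and which yields the $\gamma_h^{-1/2}$ jump-interpolation term. The remaining multiplier terms give $\|\gamma_h^{1/2}\bos\eta_h\|^2-\|\gamma_h^{1/2}(\laf-\Pi\laf)\|\cdots$ and $\mathbb S_\Gamma(\bos\eta_h,\bos\eta_h)$ minus a cross term with $\mathbb S_\Gamma(\Pi\laf,\Pi\laf)$. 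At this point we have $\|\uS e\|^2_{\mathrm{en},h}$ plus these positive terms, bounded by the right-hand side times $\|\uS e\|_{1,h}$ or the other unknowns. The catch is that the $\laf$-coupling term $\int_\Gamma(\laf-\Pi\laf)\cdot\djumph{\uS e}$, arising from replacing $\laf$ by its projection, is not small in a norm we control. We handle it with an inverse inequality on $\Gamma$: by quasi-uniformity \eqref{eq:quasi.uniform.fracture} and discrete trace estimates, $\|\djumph{\uS e}\|_{L^2(\sigma)}\lesssim h_\sigma^{-1/2}\|\uS e\|_{1,h}$ on neighbouring cells. This is exactly where the weight $\gamma_h^{1/2}$ and the face term $\gamma_h^{1/2}(\bbsigma(\bu)-\Pi^k_K\bbsigma(\bu))$ on $\cF^+_{\Gamma,K}$ come from; the latter arises because the bubble correction tests against $\Pi^k_K\bbsigma$ only. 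Smallness of $\gamma_0$ makes these absorbable.

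\emph{Control of $\bos\eta_h$: the main obstacle.} The multiplier error appears above only with weight $\gamma_h$, so it is too weak to close the estimate. I expect this to be the hardest step, and I would use the discrete lifting (Lemma~\ref{rem:lifting}). For $\bos\eta_h$, the lifting provides $\uS{v}_\eta$ with
\[
\int_\Gamma\bos\eta_h\cdot\djumph{\uS v_\eta}\gtrsim\|\bos\eta_h\|^2_{-1/2,h}-C\,\mathbb S_\Gamma(\bos\eta_h,\bos\eta_h)\cdot(\ldots),
\qquad \|\uS v_\eta\|_{1,h}\lesssim h_\Gamma^{-1/2}\|\bos\eta_h\|_{L^2(\Gamma)}.
\]
The jump term involving $\mathbb S_\Gamma$ accounts for the weakness of the lifting when $\ell=k$ with cross-face discontinuities; this is why $\delta$ must be large. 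Testing the error equation with $(\uS v_\eta,0)$ expresses $\int\gamma_h^{-1}(P-\gamma_h\laf)\cdot\djumph{\uS v_\eta}$ through the elastic terms. Rewriting $\gamma_h^{-1}P-\laf_h=\gamma_h^{-1}(P-\gamma_h\laf)+\bos\eta_h+(\laf-\Pi\laf)$ bounds $\|\gamma_h^{1/2}\bos\eta_h\|$ by $\|\uS e\|_{\mathrm{en},h}$, $\|\gamma_h^{-1/2}(P-\gamma_h\laf)\|$ and the data terms, with constants scaling like $\gamma_0^{1/2}$. Combining this with the monotonicity inequality, choosing first $\delta$ large and then $\gamma_0$ small, and absorbing via Young's inequality yields the stated bound.
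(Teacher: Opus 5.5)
Your monotonicity step does not produce what you claim, and that is where the real difficulty lies. Write $P:=\bPg{\djumph{\uS{u}}}{\laf_h}$, $X:=\NORM{L^2(\Gamma)}{\gamma_h^{-\frac12}P-\gamma_h^{\frac12}\laf}$ and $Y:=\NORM{L^2(\Gamma)}{\gamma_h^{\frac12}\bos{\eta}_h}$. After testing with $(\uS{e},\bos{\mu}_h)$ and subtracting the exact identity, the contact contribution to the right-hand side of $\NORM{\mathrm{en},h}{\uS{e}}^2\le\cdots$ is exactly
\begin{equation*}
\int_\Gamma\gamma_h^{-1}(P-\gamma_h\laf)\cdot\big(\djumph{\uS{e}}-\gamma_h\bos{\mu}_h\big)+\int_\Gamma\gamma_h(\laf_h-\laf)\cdot\bos{\mu}_h+\mathbb{S}_\Gamma(\laf_h,\bos{\mu}_h).
\end{equation*}

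\emph{Your stated choice $\bos{\mu}_h=-\bos{\eta}_h$.} The bracket is indeed a consistency term minus $(a-b)$. However, the last two terms equal $+Y^2+\mathbb{S}_\Gamma(\bos{\eta}_h,\bos{\eta}_h)-\mathbb{S}_\Gamma(\Pi^\ell_{\cF_\Gamma}\laf,\bos{\eta}_h)$. So the multiplier errors land on the wrong side.

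\emph{Your displayed regrouping.} With bracket $(\djumph{\IhS\bu}-\djumph{\uS{u}})+\gamma_h(\laf_h-\laf)$ and the term $-\gamma_h(\laf_h-\laf)\cdot(\laf_h-\Pi^\ell_{\cF_\Gamma}\laf)$, this is really the test $+\bos{\eta}_h$ (up to $\laf-\Pi^\ell_{\cF_\Gamma}\laf$). There the multiplier terms have the right sign. But the bracket equals $(\djumph{\IhS\bu}-\jump{\bu})-(a-b)+2\gamma_h(\laf_h-\laf)$. Invoking monotonicity therefore silently drops $2\int_\Gamma(P-\gamma_h\laf)\cdot(\laf_h-\laf)$. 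Its principal part, $-2\int_\Gamma(\gamma_h^{-1}P-\laf)\cdot\gamma_h\bos{\eta}_h$, is only bounded by $2XY$. Since the left-hand side provides exactly $X^2+Y^2$, Young's inequality cannot absorb it.

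This critical cross term is the genuine obstacle. The weight on $\bos{\eta}_h$ is not: the theorem measures $\bos{\eta}_h$ precisely through $Y$, and with $\bos{\mu}_h=\bos{\eta}_h$ one gets $\int_\Gamma\gamma_h(\laf_h-\laf)\cdot\bos{\eta}_h=-Y^2$ for free from the $L^2$-orthogonality of $\Pi^\ell_{\cF_\Gamma}$.

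The paper disposes of the cross term with the lifting $\RhS$ of Lemma~\ref{rem:lifting} associated with $-2\gamma_h\bos{\eta}_h$. By \eqref{eq:prop.Rh.jump} the cross term equals $\int_\Gamma(\gamma_h^{-1}P-\laf)\cdot\Pi^\ell_{\cF_\Gamma}\djumph{\RhS}$. This is split into $\int_\Gamma(\gamma_h^{-1}P-\laf)\cdot\djumph{\RhS}$ and a remainder against $\Pi^\ell_{\cF_\Gamma}\djumph{\RhS}-\djumph{\RhS}$.

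\emph{First piece.} Test the scheme with $(\RhS,\bo{0})$ and compute $\int_\Omega\bbsigma(\bu):\bbeps_h(\RhS)$ through the definition of $\GS{K}$ against $\Pi^k_K\bbsigma(\bu)$, using $\bo{R}_K=\bo{0}$. This yields elastic terms of size $\gamma_0^{1/2}\big(\NORM{\mathrm{en},h}{\uS{e}}+\bos{\fC}_h(\bu,\IhS\bu)\big)Y$ by \eqref{eq:prop.Rh.norm.energy}. It also yields the face residual $\sum_{K}\sum_{\sigma\in\cF_K}\int_\sigma\UpS{\Ksig}\RhS\cdot(\bbsigma(\bu)-\Pi^k_K\bbsigma(\bu))\normal_{\Ksig}$. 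Because $\UpS{\Ksig}\RhS$ is of size $\gamma_h\bos{\eta}_h$, this residual is the true origin of the $\gamma_h^{1/2}(\bbsigma(\bu)-\Pi^k_K\bbsigma(\bu))$ term.

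\emph{Remainder.} It is bounded by $2C_{\mathrm{D}}\delta^{-1/2}X\,\mathbb{S}_\Gamma(\bos{\eta}_h,\bos{\eta}_h)^{1/2}$ via \eqref{eq:prop.Rh.ups.bound}, which is why $\delta$ must be large.

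Your sketch misplaces both ingredients:
\begin{itemize}
\item The lifting reproduces its target exactly in \eqref{eq:prop.Rh.jump}. Pairing it with the polynomial $\bos{\eta}_h$ therefore involves no $\mathbb{S}_\Gamma$ defect; the defect appears only against the non-polynomial residual $\gamma_h^{-1}P-\laf$.
\item The ``catch'' term $\int_\Gamma(\laf-\Pi^\ell_{\cF_\Gamma}\laf)\cdot\djumph{\uS{e}}$ never arises if you keep $\laf=\gamma_h^{-1}\bPg{\jump{\bu}}{\laf}$ paired with $\djumph{\uS{e}}$.
\item Your inverse-inequality treatment of that term would produce $h_\Gamma^{-1/2}\NORM{L^2(\Gamma)}{\laf-\Pi^\ell_{\cF_\Gamma}\laf}$, i.e.\ a $\gamma_h^{-1/2}$ weight rather than the $\gamma_h^{1/2}$ in the statement.
\end{itemize}

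Your choice $\bos{\mu}_h=-\bos{\eta}_h$ can be salvaged. It gives the clean inequality $\NORM{\mathrm{en},h}{\uS{e}}^2+X^2\le\text{(data)}+Y^2+\mathbb{S}_\Gamma(\bos{\eta}_h,\bos{\eta}_h)$. Testing the scheme with $(\bo{0},\bos{\eta}_h)$ then gives
\begin{equation*}
Y^2+\mathbb{S}_\Gamma(\bos{\eta}_h,\bos{\eta}_h)=-\int_\Gamma(\gamma_h^{-1}P-\laf)\cdot\gamma_h\bos{\eta}_h+\mathbb{S}_\Gamma(\Pi^\ell_{\cF_\Gamma}\laf,\bos{\eta}_h).
\end{equation*}
This leads back to the same cross term and the same lifting argument. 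The coefficient of $\NORM{\mathrm{en},h}{\uS{e}}^2$ then scales like $\gamma_0$ and that of $X^2$ like $\delta^{-1}$.
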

\begin{proof}
   See Section~\ref{sec:proof_error}. 
\end{proof}

In the following theorem, $H^m(\cM)$ denotes the standard space of broken $H^m$ functions $\cM$, endowed with its usual seminorm. As in the rest of the paper, the boldface $\boldsymbol{H}^m$ indicates the space of vector-valued functions whose components belong to $H^m$.

\begin{theorem}\label{thm:error.estimate}
 Let $(\bo{u},\laf)$ be the solution of \eqref{Form1_nonlinearform}. Assume that $\bu\in \bo{U}_0 \cap \boldsymbol{H}^{k+2}(\cM)$ and $\laf \in H^{\frac{1}{2}+\epsilon}(\Gamma)\cap H^{\frakr-\frac12}(\faces_\Gamma)$ for some $\epsilon>0$, and $\frakr$ as below.
 Then, the discrete solution $(\uS{u},\laf_{h})$ of \eqref{eq:scheme} satisfies the following error estimate: recalling that $\gamma_{h} =\gamma_{0}h_{\Gamma}$, for sufficiently small $\gamma_{0}\in \mathbb{R}^{+}$ and sufficiently large $\delta\in \mathbb{R}^{+},$
\begin{multline*}
    \NORM{\mathrm{en},h}{\IhS\bu-\uS{u}} 
   + \NORM{L^2(\Gamma)}{
     \gamma^{-\frac{1}{2}}_{h}\bPg{\djumph{\uS{u}}}{\laf_h}-\gamma^{\frac{1}{2}}_{h}\laf
   } +\NORM{L^2(\Gamma)}{\gamma^{\frac{1}{2}}_{h}\big(\Pi_{\cF_{\Gamma}}^{\ell}\laf-\laf_h\big)} \\ +  \mathbb{S}_{\Gamma}(\Pi_{\cF_{\Gamma}}^{\ell}\laf-\laf_h,\Pi_{\cF_{\Gamma}}^{\ell}\laf-\laf_h)^{\frac{1}{2}} \lesssim h^{\frakr}\left(|\bbsigma(\bu)|_{H^{k+1}(\cM)}+ |\bu|_{H^{k+2}(\cM)}+ |\laf|_{H^{\frakr-\frac12}(\faces_\Gamma)}\right),
\end{multline*}
where $\frakr$ depends on the choices of $\ell$:
\begin{itemize}
\item if $\ell = k$ (active bubble): $\frakr = k+1$.
\item if $\ell = k - 1$ (without bubble): $\frakr = k+\frac{1}{2}$.
\end{itemize}
\end{theorem}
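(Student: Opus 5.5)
The plan is to deduce the estimate from Theorem~\ref{thm:abs.err.bound} by bounding each of its six right-hand terms by $h^{\frakr}$ times the stated seminorms. Note $\bu\in\boldsymbol{H}^{k+2}(\cM)\subset \boldsymbol{C}^0$ cellwise, so $\IhS\bu$ is defined and the abstract bound applies. We use $\gamma_h=\gamma_0 h_\Gamma$ and the quasi-uniformity~\eqref{eq:quasi.uniform.fracture}, so that $h_\Gamma\simeq h_\sigma$ on fracture faces.

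\textbf{Consistency terms.} For $\bos{\fC}_h(\bu,\IhS\bu)$, I would use the polynomial consistency of the DDR reconstructions. $\GS{K}\IhS$ and $\UpS{K}\IhS$ reproduce $\Poly{k+1}(K)^3$. The bubble correction in \eqref{eq:UpS.face} vanishes on interpolates up to a projection error, since $\bo{v}_{\cK\sigma}+\bo{v}^\bub_{\Ksig}=\Pi^\ell_\sigma\bo{v}$. Standard approximation results from \cite{ddr-variant,Di-Pietro.Droniou.ea:24} then give $\bos{\fC}_h\lesssim h^{k+1}|\bu|_{H^{k+2}(\cM)}$. For $\bos{\cW}_h(\bbsigma(\bu))$, I would integrate $w_h$ by parts cellwise using \eqref{eq:G1K_enhance} with $\bos{\xi}=\Pi^k_K\bbsigma(\bu)$. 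The remainders are then face and edge terms of the form $(\bbsigma-\Pi^k_K\bbsigma)\normal$ paired with $\UpS{\Ksig}\uS{v}-\UpS{K}\uS{v}$, which are controlled by $\mathbb S_K$. Interior face contributions cancel by continuity of normal stresses, and fracture contributions cancel against $\int_\Gamma\gamma_\normal^+\bbsigma\cdot\djumph{\cdot}$ through the definition \eqref{eq:jump.n}. Trace inequalities give $\bos{\cW}_h\lesssim h^{k+1}|\bbsigma(\bu)|_{H^{k+1}(\cM)}$. The boundary-stress term is handled by a continuous trace inequality: $\gamma_h^{1/2}\|\bbsigma-\Pi^k_K\bbsigma\|_{L^2(\sigma)}\lesssim h^{1/2}h^{k+1/2}|\bbsigma|_{H^{k+1}(K)}=h^{k+1}|\bbsigma|_{H^{k+1}(K)}$.

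\textbf{Jump term.} This is the term that produces the difference between the two cases, and I expect it to be the main obstacle. We have $\jump{\bu}-\djumph{\IhS\bu}=(\bu_K-\UpS{\Ksig}\IhS\bu)-(\bu_L-\Upsilon^{k+1}_{\Lsig}\IhS\bu)$. Face-reconstruction consistency in $L^2(\sigma)$ gives $h^{k+3/2}|\bu|_{H^{k+2}}$, so multiplying by $\gamma_h^{-1/2}\simeq h^{-1/2}$ yields $h^{k+1}$ in both cases. One must check that the bubble correction $\bo{u}_{\cK\sigma}+\bo{u}^\bub_{\Ksig}-\Pi^\ell_\sigma\Upsilon^{k+1}_{\Ksig}$ is itself $O(h^{k+3/2})$ in $L^2(\sigma)$. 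It equals $\Pi^\ell_\sigma(\bu-\Upsilon^{k+1}_{\Ksig}\IhS\bu)$, so this follows.

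\textbf{Multiplier terms.} Here the rate is $\ell$-dependent. By projection approximation, $\gamma_h^{1/2}\|\laf-\Pi^\ell\laf\|_{L^2(\Gamma)}\lesssim h^{1/2}h^{\min(\ell+1,\frakr-1/2)}|\laf|_{H^{\frakr-1/2}(\faces_\Gamma)}$. For $\ell=k$ this is $h^{k+1}$; for $\ell=k-1$ it is $h^{1/2+k}$, which is exactly the source of $\frakr=k+\tfrac12$. For $\mathbb S_\Gamma(\Pi^\ell\laf,\Pi^\ell\laf)$, the regularity $\laf\in H^{1/2+\epsilon}(\Gamma)$ ensures $\laf$ has no jumps across interior edges of $\Gamma$ in the trace sense. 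Hence $\jump{\Pi^\ell\laf}_e=\jump{\Pi^\ell\laf-\laf}_e$, and a (fractional) trace inequality on faces gives $\delta\gamma_h h_e\|\Pi^\ell\laf-\laf\|^2_{L^2(e)}\lesssim h^2 h^{-1}h^{2(\frakr-1/2)}=h^{2\frakr}$. The delicate point is applying the edge trace estimate with only $H^{\frakr-1/2}$ regularity, which requires $\frakr-\tfrac12>\tfrac12$. When this fails (e.g.\ $k=0$, or $\ell=k-1$ with $k=1$), I would instead use $H^{1/2+\epsilon}$ together with an interpolation argument. Collecting all terms gives the stated bound.
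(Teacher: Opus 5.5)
Your proposal is correct and follows the paper's proof essentially step by step. You bound the six right-hand terms of Theorem~\ref{thm:abs.err.bound} by the same primal-consistency, adjoint-consistency, jump-consistency, projection and trace arguments, and you correctly trace the reduction to $\frakr=k+\frac12$ for $\ell=k-1$ to the multiplier-approximation terms.

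There are two small slips, neither of which is a gap. First, in $\bos{\cW}_h$ the integration by parts naturally pairs $(\bbsigma(\bu)-\Pi^k_K\bbsigma(\bu))\normal_{\Ksig}$ with $\UpS{\Ksig}\uS{v}-\bo{v}_K$, as in the paper, and no edge terms appear. Second, the only borderline case for the face-to-edge trace in the $\mathbb{S}_\Gamma$ term is $k=\ell=0$: for $\ell=k-1$, $k=1$ one has $\frakr-\frac12=1$, so the standard trace inequality applies. The paper's own proof passes over the $k=0$ case by applying the $H^1$ trace inequality uniformly.
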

\begin{proof}
   See Section~\ref{subsec:error.bound}. 
\end{proof}

\begin{theorem}[Suboptimal error estimate without $\mathbb{S}_{\Gamma}$ and the projected jump reconstruction~\eqref{eq:jump.modify}]\label{thm:error.estimate.sub}
 Let $(\bo{u},\laf)$ be the solution of \eqref{Form1_nonlinearform} . Assume that $\bu\in \bo{U}_0 \cap \boldsymbol{H}^{k+2}(\cM)$ and $\laf \in H^{\frac{1}{2}+\epsilon}(\Gamma)\cap H^{\ell}(\faces_\Gamma).$
 Then, the discrete solution $(\uS{u},\laf_{h})$ of the scheme \eqref{eq:scheme} modified without stabilizer~$\mathbb{S}_{\Gamma}$ and using the projected jump reconstruction operator~\eqref{eq:jump.modify}, satisfies the following error estimate: recalling that $\gamma_{h} =\gamma_{0}h_{\Gamma}$, for sufficiently small $\gamma_{0}\in \mathbb{R}^{+}$ and sufficiently large $\delta\in \mathbb{R}^{+},$
\begin{multline*}
    \NORM{\mathrm{en},h}{\IhS\bu-\uS{u}} 
   + \NORM{L^2(\Gamma)}{
     \gamma^{-\frac{1}{2}}_{h}\bPg{\djumph{\uS{u}}}{\laf_h}-\gamma^{\frac{1}{2}}_h\laf
   } +\NORM{L^2(\Gamma)}{\gamma^{\frac{1}{2}}_h\big(\Pi_{\cF_{\Gamma}}^{\ell}\laf-\laf_h\big)} \\  \lesssim h^{\ell+\frac{1}{2}}\left(|\bbsigma(\bu)|_{H^{k+1}(\cM)}+ |\bu|_{H^{k+2}(\cM)}+ |\laf|_{H^{\ell}(\faces_\Gamma)} +\SEMINORM{H^{\ell+1}(\faces_\Gamma)}{\jump{\bu}}\right).
\end{multline*}
\end{theorem}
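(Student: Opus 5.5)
The plan is to follow the proof of Theorem~\ref{thm:abs.err.bound} for the modified scheme, with $\mathbb{S}_{\Gamma}\equiv 0$ and $\djumph{\cdot}$ replaced by the projected jump $\Pi^{\ell}_{\cF_\Gamma}\djumph{\cdot}$ of \eqref{eq:jump.modify}. Then I bound each resulting term using the approximation properties of $\IhS$ and of $L^2$-projectors on the quasi-uniform fracture mesh. The key point is that the stabiliser $\mathbb{S}_\Gamma$ was only needed to control the term $\mathbb{S}_{\Gamma}(\Pi^\ell_{\cF_\Gamma}\laf,\Pi^\ell_{\cF_\Gamma}\laf)$ and to recover optimal multiplier approximation. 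Dropping it removes that term from the right-hand side but forces a worse treatment of the jump consistency.

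\textbf{Step 1: abstract bound for the modified scheme.} I redo the abstract argument. Test the error equation with $(\IhS\bu-\uS{u},\Pi^\ell_{\cF_\Gamma}\laf-\laf_h)$ and use the firm non-expansiveness of $\mathbf{P}_{\gamma_h}$, i.e.\ the monotonicity of the projections $[\cdot]_+$ and $[\cdot]_{\gamma_h g}$. Add and subtract the exact relation $\gamma\laf=\continuousbPg{\jump{\bu}}{\laf}$ written with $\gamma_h$.

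The coupling term $\int_\Gamma \gamma_h^{-1}\mathbf{P}\cdot\Pi^\ell\djumph{\uS{v}}$ no longer carries a face-polynomial mismatch. Since $\Pi^\ell\djumph{\cdot}\in\Ms$, the lifting of Lemma~\ref{rem:lifting} is used exactly as before. This gives control of $\|\gamma_h^{1/2}(\Pi^\ell\laf-\laf_h)\|$ for small $\gamma_0$, with no stabiliser.

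The limit-conformity measure is modified: $\int_\Gamma\gamma_{\bo n}^+\bbsigma\cdot\djumph{\uS v}$ becomes $\int_\Gamma\laf\cdot\Pi^\ell\djumph{\uS v}$. This produces the extra term
\[
\int_\Gamma(\laf-\Pi^\ell_{\cF_\Gamma}\laf)\cdot\djumph{\uS{v}}.
\]
I bound it by $\|\gamma_h^{1/2}(\laf-\Pi^\ell\laf)\|_{L^2(\Gamma)}\,\|\gamma_h^{-1/2}\djumph{\uS v}\|_{L^2(\Gamma)}$. Then I use a discrete trace inequality $\|\djumph{\uS v}\|_{L^2(\sigma)}\lesssim h_\sigma^{1/2}\|\uS v\|_{1,K\cup L}$. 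This is valid because of the Dirichlet condition and discrete Poincaré, and by quasi-uniformity \eqref{eq:quasi.uniform.fracture}. Altogether this yields a factor $h_\Gamma^{-1/2}\cdot h_\Gamma^{1/2}$ times $\|\laf-\Pi^\ell\laf\|_{L^2(\Gamma)}$, losing half an order.

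\textbf{Step 2: jump approximation.} The interpolation term becomes $\|\gamma_h^{-1/2}(\jump{\bu}-\Pi^\ell\djumph{\IhS\bu})\|$. I split it as $(\jump{\bu}-\Pi^\ell\jump{\bu})+\Pi^\ell(\jump{\bu}-\djumph{\IhS\bu})$. The first piece is $O(h^{\ell+1}|\jump{\bu}|_{H^{\ell+1}(\faces_\Gamma)})$. The second piece is bounded by the DDR face-potential consistency $\|\Upsilon^{k+1}_{\Ksig}\underline{\mathbb I}_h\bu-\bu\|_{L^2(\sigma)}\lesssim h^{k+3/2}|\bu|_{H^{k+2}}$, together with the bubble correction, which is consistent by the definition of $\I^\bub$. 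After dividing by $\gamma_h^{1/2}$ this gives $h^{\ell+1/2}$.

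\textbf{Step 3: remaining terms.} The terms $\bos{\fC}_h(\bu,\IhS\bu)$ and $\bos{\cW}_h(\bbsigma(\bu))$ are $O(h^{k+1})$ by standard DDR consistency. The face term $\|\gamma_h^{1/2}(\bbsigma-\Pi^k_K\bbsigma)\|_{L^2(\sigma)}$ is $O(h^{k+1})$ by the trace approximation estimate. The multiplier terms give $h^{1/2}\cdot h^{\ell}|\laf|_{H^\ell(\faces_\Gamma)}$. Since $\ell\le k$, everything is bounded by $h^{\ell+1/2}$.

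I expect the main obstacle to be Step 1: checking that the lifting argument and monotonicity still close without $\mathbb{S}_\Gamma$. In particular, the lifting must be controlled only by $\Pi^\ell\djumph{\cdot}$, and the extra conformity term must be absorbed via the discrete trace inequality rather than by stabilisation.
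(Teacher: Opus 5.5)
\paragraph{Overall route.} Your plan follows the paper's Section~\ref{sec:nostab}: rerun the abstract argument, note that $I_{4\rm{c}}$ vanishes because the projected jump of $\RhS$ lies in $\Ms$, and split the jump consistency as in Lemma~\ref{lem:consis.jump.recons_modified}. You also rightly isolate the extra term $\int_\Gamma(\laf-\Pi^\ell_{\cF_\Gamma}\laf)\cdot\djumph{\uS v}$, where $\djumph{\cdot}$ is the unprojected jump \eqref{eq:jump.n}. It appears because the identity behind Lemma~\ref{lm:adjoint.consis} holds only for that jump. The same term arises for the lifting $\RhS$, which your Step~1 passes over; there it is harmless by \eqref{eq:prop.Rh.norm.gamma}. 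That lifting instance is the only one the paper records, in its modified version of \eqref{eq:D1.Rh2}.

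\paragraph{The gap.} The problem is your bound of this term for $\uS v=\IhS\bu-\uS u$. To reach $h^{\ell+\frac12}$ you need it to be $\lesssim h^{\frac12}\NORM{L^2(\Gamma)}{\laf-\Pi^\ell_{\cF_\Gamma}\laf}\NORM{1,h}{\uS v}$. The inequality you invoke for this, $\NORM{L^2(\sigma)}{\djumph{\uS v}}\lesssim h_\sigma^{\frac12}(\NORM{1,K}{\uS v}+\NORM{1,L}{\uS v})$, is false. Summed over $\sigma$ it would give $\NORM{L^2(\Gamma)}{\djumph{\uS v}}\lesssim h^{\frac12}\NORM{1,h}{\uS v}$. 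This fails for $\uS v=\IhS\bv$ with $\bv\in\bo{U}_0$ smooth and $\jump{\bv}\neq\bo 0$: the left-hand side tends to $\NORM{L^2(\Gamma)}{\jump{\bv}}$, while $\NORM{1,h}{\IhS\bv}$ stays bounded. The Dirichlet condition and discrete Poincar\'e inequality only give $\NORM{L^2(\Gamma)}{\djumph{\uS v}}\lesssim\NORM{1,h}{\uS v}$, with no power of $h$. With that bound the extra term is only $O(h^{\ell})$, half an order short of the statement.

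\paragraph{The fix.} Use orthogonality before any trace inequality. Since $\laf-\Pi^\ell_{\cF_\Gamma}\laf$ is $L^2(\sigma)$-orthogonal to $\Poly{\ell}(\sigma)^3$, you may replace $\djumph{\uS v}$ on $\sigma$ by $\djumph{\uS v}-\Pi^\ell_\sigma\djumph{\uS v}=\bo w_K-\bo w_L$. Here $\bo w_K:=\Upsilon^{k+1}_{\Ksig}\uvec v_h-\Pi^\ell_\sigma\Upsilon^{k+1}_{\Ksig}\uvec v_h$, and $\bo w_L$ is defined similarly; this works because the bubble correction in \eqref{eq:jump.n} has degree $\ell$. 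As $\Poly{\ell}(\sigma)$ contains the constants, taking $\bo c:=\Pi^0_K\UpS{K}\uS v$ gives
\[
\NORM{L^2(\sigma)}{\bo w_K}\le\NORM{L^2(\sigma)}{\Upsilon^{k+1}_{\Ksig}\uvec v_h-\UpS{K}\uS v}+\NORM{L^2(\sigma)}{\UpS{K}\uS v-\bo c}\lesssim h_K^{\frac12}\Big(\mathbb{S}_K(\uS v,\uS v)^{\frac12}+\NORM{L^2(K)}{\nabla\UpS{K}\uS v}\Big)\lesssim h_K^{\frac12}\NORM{1,K}{\uS v}.
\]
This uses:
\begin{itemize}
\item the first term of \eqref{eq:stabK};
\item a discrete trace inequality and a Poincar\'e--Wirtinger inequality on $K$;
\item the bound $\NORM{L^2(K)}{\nabla\UpS{K}\uS v-\GS{K}\uS v}\lesssim\mathbb{S}_K(\uS v,\uS v)^{\frac12}$, obtained by testing the definitions of $\GS{K}$ and $\UpS{K}$ with $\nabla\UpS{K}\uS v-\GS{K}\uS v\in\Poly{k}(K)^{3\times3}$ and using $\Pi^{k-1}_K\UpS{K}\uS v=\bo v_K$.
\end{itemize}
The same holds for $\bo w_L$, so $\int_\Gamma(\laf-\Pi^\ell_{\cF_\Gamma}\laf)\cdot\djumph{\uS v}\lesssim h^{\ell+\frac12}\SEMINORM{H^{\ell}(\cF_\Gamma)}{\laf}\NORM{1,h}{\uS v}$. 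The local $h^{\frac12}$ gain you wanted is thus true only for the part of the jump orthogonal to $\Poly{\ell}(\sigma)^3$, which is all that is tested here. With this replacement, the rest of your argument goes through.
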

\begin{proof}
See Section~\ref{sec:nostab}. 
\end{proof}

\begin{remark}[Locking-free scheme]
Thanks to the bubble enrichment for $k=0$ or to the already existing face displacement unknown for $k\ge 1$, following the analysis in \cite{droniou2026locking,DKMR25} it can be seen that the hidden constants in the error estimates of Theorems \ref{thm:abs.err.bound}, \ref{thm:error.estimate} and \ref{thm:error.estimate.sub} do not depend on the Poisson ratio $\nu$.
As a consequence, the schemes presented here locking-free as $\nu\to\frac12$.
\end{remark}

\section{Numerical experiments}\label{sec:numerical.exp.}
In this section, we assess the accuracy and robustness of the augmented Lagrangian DDR scheme~\eqref{eq:scheme} for contact mechanics with Tresca friction. The first and second Lamé
coefficients are related to Young's modulus $E$ and Poisson's ratio $\nu$ by $L := \frac{\nu E}{(1+\nu)(1-2\nu)}$ and $G := \frac{E}{2(1+\nu)}$, respectively. Section~\ref{exm:tresca} considers a manufactured solution on the tetrahedral and hexa-cut mesh families of \cite{DKMR25}, for $k \in \{0,1,2\}$ and both admissible choices of $\ell$. It compares the scheme of Theorem~\ref{thm:error.estimate} (jump~\eqref{eq:jump.n} with $\mathbb{S}_\Gamma$) with the stabiliser-free, projected-jump~\eqref{eq:jump.modify} variant of Theorem~\ref{thm:error.estimate.sub}. Section~\ref{exm:intersecting.fracture} addresses a geometrically realistic discrete
fracture--matrix configuration with intersecting fractures, for which no exact solution is available. There, the schemes are compared through the mean normal jump and the mean normal
multiplier on each fracture, and through the number of Newton iterations for several values of $\gamma_0$. The choices of $\gamma_0$ and $\delta$ are discussed in Remark~\ref{rem:gamma0.delta}.

\subsection{Tresca friction model with manufactured solution}\label{exm:tresca}
Following \cite{DKMR25}, we consider the 3D domain $\Omega = (-1,1)^3$ with a single non-immersed fracture $\Gamma = \{0\} \times (-1,1)^2$.  The friction coefficient $g$ is set to $1$, corresponding to frictionless contact,  and the Lam\'e coefficients are set to be $G = L = 1$. The exact solution is defined by
\begin{align*}
\bo{u}(x,y,z) =
\begin{cases}
\begin{pmatrix}
h(x,y)P(z) - g y \\
P(z) \\
x^2 P(z)
\end{pmatrix} & \text{if } z \ge 0, \\[1.2em]
\begin{pmatrix}
h(x,y)Q(z) - g y \\
2Q(z) \\
x^2 Q(z)
\end{pmatrix} & \text{if } z < 0,~ x < 0, \\[1.2em]
\begin{pmatrix}
h(x,y)Q(z) - g y \\
Q(z) \\
x^2 Q(z)
\end{pmatrix} & \text{if } z < 0,~ x \ge 0,
\end{cases}
\end{align*}
with $h(x,y) = -\sin(x)\cos(y)$, $P(z) = z^2$, and $Q(z) = z^2/4$, 
and satisfies the Tresca frictional-contact conditions at the matrix–fracture interface $\Gamma$. The right-hand side $\mathbf{f} = -\operatorname{div}\boldsymbol{\sigma}(\mathbf{u})$ and the Dirichlet boundary conditions on $\partial\Omega$ are obtained from $\bo{u}$. The fracture $\Gamma$ is in slippy-contact for $z<0$ and sticky-contact for $z>0.$ The convergence of the scheme is assessed on the same families of uniform tetrahedral and hexa-cut meshes as in~\cite{DKMR25}~(the latter meshes are genuine polyhedral meshes, with each non-planar face of a random hexahedral mesh cut in two); the polynomial degrees are $k=0,1,2$ and, for each $k\ge 1$, both admissible choices $\ell=k$ (with bubble) and $\ell=k-1$ (without bubble). Figures~\ref{fig:tresca.tet} and \ref{fig:tresca.hex} report the relative energy-norm error  and the relative $\gamma_h^{1/2}$-weighted $L^2(\Gamma)$ multiplier error against $N_{\mathrm{cell}}^{1/3}$, together with together with reference slopes. Panels (a) -- (b) of each figure use the scheme to validate Theorem~\ref{thm:error.estimate}, i.e., the jump reconstruction~\eqref{eq:jump.n} with stabiliser $\mathbb{S}_\Gamma$, for which the theorem predicts an energy-norm rate of $h^{k+1}$ when $\ell=k$ and $h^{k+1/2}$ when $\ell=k-1$. Panels (c) -- (d) use the stabiliser-free (no $\mathbb{S}_{\Gamma}$), projected-jump variant~\eqref{eq:jump.modify} of Theorem~\ref{thm:error.estimate.sub}, for which the (suboptimal) predicted rate is $h^{\ell+1/2}$ irrespective of whether $\ell=k$ or $\ell=k-1$.

On tetrahedral meshes, the scheme of Theorem~\ref{thm:error.estimate} reproduces the predicted
rates closely once $k\ge 1$: for $\ell=k$, the energy-norm rate settles at
$2.0$--$2.1$ for $k=1$ and exactly $3.0$ for $k=2$, against the predicted
$k+1=2$ and $3$; for $\ell=k-1$, it settles at $2.6$ for $k=2$ against the
predicted $k+1/2=2.5$, while the $k=1$ case ($1.1$--$1.4$ against a
predicted $1.5$) is still approaching its asymptotic rate over the range of
meshes tested. The $k=0$ case (only $\ell=0$ is admissible) similarly rises
from $1.3$ to $1.5$ against a predicted rate of $1$, i.e.\ mildly
super-convergent rather than under-converging. The multiplier error tracks
the same predictions, generally matching or mildly exceeding them (e.g.\
$2.6$--$2.7$ for $k=1$, $\ell=k$, against a predicted $2$).

The stabiliser-free (no $S_{\Gamma}$) variant corresponding to Theorem~\ref{thm:error.estimate.sub} (panels (c)--(d)) is, on both mesh families, markedly \emph{more} accurate in practice than its own suboptimal bound: for $k=0$ the observed energy-norm rate is $1.7$ against a predicted $0.5$, and for $\ell=k-1,\,k=1$ it is $2.0$--$2.1$ against a predicted $0.5$ -- more than a full order better in both cases. The multiplier error shows the same pattern (e.g.\ $2.4$--$2.6$ against a predicted $0.5$ for $k=1$, $\ell=k-1$). This is the analogous
observation already made for the Tresca case in \cite[Section~6.2]{DKMR25}, which is related to higher-order mixed scheme.

The hexa-cut family reproduces the same qualitative trends, with one caveat worth reporting: for $k=0$ and for the $\ell=k-1$ cases, the observed multiplier rate is non-monotonic at the coarsest refinement levels, e.g.,$-0.1$ then $0.6,1.0,1.6$ for $k=\ell=0$ with the
stabilised jump, and $-0.3$ then $0.6,1.0,1.6$ for $k=1,\ell=0,$ before settling into a regime consistent with the tetrahedral results.



\begin{figure}[H]\centering
  \begin{subfigure}[b]{0.48\textwidth}
    \centering
    \begin{tikzpicture}\panelBubbleLeg{tet_std}{leg.tet}
    \end{tikzpicture}
    \subcaption{$\ell=k$ (bubble), standard jump~\eqref{eq:jump.n}}
    \label{fig:tet:a}
  \end{subfigure}\hfill
  \begin{subfigure}[b]{0.48\textwidth}
    \centering
    \begin{tikzpicture}\panelNoBubble{tet_std}\end{tikzpicture}
    \subcaption{$\ell=k-1$ (no bubble), standard jump~\eqref{eq:jump.n}}
    \label{fig:tet:b}
  \end{subfigure}\\[0.4cm]
  \begin{subfigure}[b]{0.48\textwidth}
    \centering
    \begin{tikzpicture}\panelBubbleLeg{tet_proj}{leg2.tet}\end{tikzpicture}
    \subcaption{$\ell=k$ (bubble), projected jump~\eqref{eq:jump.modify}}
    \label{fig:tet:c}
  \end{subfigure}\hfill
  \begin{subfigure}[b]{0.48\textwidth}
    \centering
    \begin{tikzpicture}\panelNoBubble{tet_proj}\end{tikzpicture}
    \subcaption{$\ell=k-1$ (no bubble), projected jump~\eqref{eq:jump.modify}}
    \label{fig:tet:d}
  \end{subfigure}\\[0.3cm]
  \ref{leg.tet}
  \caption{Tetrahedral mesh, Tresca friction test case Section~\ref{exm:tresca}: relative energy-norm error (solid curve) and relative $\gamma_h^{1/2}$-weighted $L^2$ multiplier error (dashed curve) versus $N_{\mathrm{cell}}^{1/3}$, for $k=0,1,2$. Panels (a) -- (b) use the scheme of Theorem~\ref{thm:error.estimate} (jump~\eqref{eq:jump.n},
  stabiliser $\mathbb{S}_\Gamma$); panels (c) -- (d) use the stabiliser-free $\mathbb{S}_\Gamma$, projected-jump variant of Theorem~\ref{thm:error.estimate.sub}. For $k=0$, only $\ell=0$ is valid, so panels (b) and (d) show $k=1,2$ only.}
  \label{fig:tresca.tet}
\end{figure}

\begin{figure}[H]\centering
  \begin{subfigure}[b]{0.48\textwidth}
    \centering
    \begin{tikzpicture}\panelBubbleLeg{hex_std}{leg.hex}\end{tikzpicture}
    \subcaption{$\ell=k$ (bubble), standard jump~\eqref{eq:jump.n}}
    \label{fig:hex:a}
  \end{subfigure}\hfill
  \begin{subfigure}[b]{0.48\textwidth}
    \centering
    \begin{tikzpicture}\panelNoBubble{hex_std}\end{tikzpicture}
    \subcaption{$\ell=k-1$ (no bubble), standard jump~\eqref{eq:jump.n}}
    \label{fig:hex:b}
  \end{subfigure}\\[0.4cm]
  \begin{subfigure}[b]{0.48\textwidth}
    \centering
    \begin{tikzpicture}\panelBubbleLeg{hex_proj}{leg2.hex}\end{tikzpicture}
    \subcaption{$\ell=k$ (bubble), projected jump~\eqref{eq:jump.modify}}
    \label{fig:hex:c}
  \end{subfigure}\hfill
  \begin{subfigure}[b]{0.48\textwidth}
    \centering
    \begin{tikzpicture}\panelNoBubble{hex_proj}\end{tikzpicture}
    \subcaption{$\ell=k-1$ (no bubble), projected jump~\eqref{eq:jump.modify}}
    \label{fig:hex:d}
  \end{subfigure}\\[0.3cm]
  \ref{leg.hex}
  \caption{Hexa-cut mesh,  Tresca friction test case Section~\ref{exm:tresca}: relative energy-norm error (solid curve) and relative $\gamma_h^{1/2}$-weighted $L^2$ multiplier error (dashed curve) versus $N_{\mathrm{cell}}^{1/3}$, for $k=0,1,2$. Panels (a) -- (b) use the scheme of Theorem~\ref{thm:error.estimate} (jump~\eqref{eq:jump.n},
  stabiliser $\mathbb{S}_\Gamma$); panels (c) -- (d) use the stabiliser-free $\mathbb{S}_\Gamma$, projected-jump variant of Theorem~\ref{thm:error.estimate.sub} (jump~). $k=0$ has only
  $\ell=0$, so panels (b) and (d) show $k=1,2$ only .}
  \label{fig:tresca.hex}
\end{figure}

\subsection{Discrete fracture matrix~(DFM) model with intersecting fractures}\label{exm:intersecting.fracture}
We consider the domain $\Omega=(0,1)^3$ containing the fracture network shown in Figure~\ref{fig:mesh_6k}. The tetrahedral meshes, which conform to the fracture network, are generated with \texttt{Gmsh}~\cite{gmsh}. The material is assumed to be homogeneous and isotropic, with Young's modulus $E=4\times 10^{9}$ and Poisson's ratio $\nu=0.2$, which gives a shear modulus $L=E/(2(1+\nu))=\frac{5}{3}\times 10^{9}$. Frictional contact along the fractures is described by the Tresca model with a constant friction threshold $g=4\times 10^{6}$. Dirichlet conditions are imposed on the bottom and top faces, namely $\bo{u}=\bo{0}$ at $z=0$ and $\bo{u}=(0.0015,-0.0015,-0.002)$ at $z=1$, while homogeneous Neumann conditions are prescribed on the lateral faces. 

Figures~\ref{fig:F1.normal.comp.Lag.Multi}-\ref{fig:lambda.fractures234.multi}  and Table~\ref{tab:dfm.newton.comparison} together give a qualitative and quantitative assessment of the scheme
on this more demanding, geometrically realistic setting, in the same spirit as the study carried out in~\cite{droniou2023bubble,DKMR25} for intersecting fractures.

Figure~\ref{fig:F1.normal.comp.Lag.Multi} plots the normal component of the discrete Lagrange multiplier on fracture~F1 for three choices of $(k,\ell),$ using the projected jump~\eqref{eq:jump.modify} on a tetrahedral mesh with 123k cells and 8.8k fracture faces. The lowest-order case $(k,\ell)=(0,0)$ resolves the transition
between the open and sticking regions only coarsely, producing the visibly faceted, piecewise-constant profile of panel~(a); increasing the polynomial degree to $(k,\ell)=(1,0)$ already smooths this transition considerably (panel~(b)), and $(k,\ell)=(1,1)$ sharpens the resolution of the
contact boundary further still (panel~(c)), without introducing any spurious oscillations. This qualitative gain in smoothness is consistent with the higher-order scheme results reported in~\cite{DKMR25}. Figure~\ref{fig:contact_normal} shows, for $(k,\ell)=(1,0)$ and with the projected jump~\eqref{eq:jump.modify}, the classification of every fracture face into one of the four contact states~(see Figure~\ref{fig:contact_normal}(a)) together with the corresponding normal displacement jump $\djumphn{\uS{u}}$~(see Figure~\ref{fig:contact_normal}(b)). The results are similar for $(k,\ell)=(1,1)$.

Figure~\ref{fig:F1.normal.comp.Lag.Multi.smooth} illustrates the smoothing effect due to the standard contact stabiliser on the normal Lagrange multiplier F1, for three different standard-jump penalty parameters $\delta$. With $\delta=0,$ the multiplier has pronounced local oscillations. Increasing the parameter $\delta$ to $0.5$ visibly reduces these oscillations, with a maximum of $7.3\times10^5$, and produces a smoother field. At $\delta=1,$ the field remains smooth, with smaller additional changes, with a maximum of $7\times10^5$. This illustrates the smoothing effect of the contact stabiliser. But for $\delta=10$, the maximum of the multiplier decreases from $7.5\times 10^5$ at~($\delta=0$) to $5.4\times 10^5,$ indicating that stronger stabilisation smooths the field but also reduces its peak.

As the mesh is refined, the mean normal jump and mean multiplier on~F1 are expected to converge to their (unknown) exact values; Figure~\ref{fig:normal.fracture1.multi} shows that all four discretisations $(k,\ell) \in \{(0,0),(1,0),(1,1),(2,1)\}$ with projected jump~\eqref{eq:jump.modify} converge to the same limiting values, but at very different rates in terms of degrees of freedom. On the coarsest meshes, the $(k,\ell)=(1,0)$ results are sometimes closer to the values approached under refinement than the $(k,\ell)=(1,1)$ results. Thus, these plots do not establish a uniform advantage for $(k,\ell)=(1,1)$. As the mesh is refined, the higher-order schemes approach similar mean values, whereas the $(k,\ell)=(0,0)$ results change more substantially over the meshes considered. The normal displacement jump and normal Lagrange multiplier are shown in Figures~\ref{fig:normal.fracture1.multi}(a) and~\ref{fig:normal.fracture1.multi}(b), respectively. Figure~\ref{fig:lambda.fractures234.multi} repeats this comparison for the mean of the normal Lagrange multiplier on fractures F2, F3, and F4. The same pattern recurs: all four schemes are converging toward common limiting values, with $(k,\ell)=(0,0)$ approaching them only slowly, while the higher-order schemes are already within the limit on the coarsest meshes shown. For the projected jump scheme with $\ell=0$, the converged discrete solution is independent of $\gamma_0$; the choice of $\gamma_0$ affects the convergence of the semismooth Newton method~(see Remark~\ref{remark:equivalencemixed}).

\begin{figure}[H]
\centering
\includegraphics[width=0.40\textwidth]{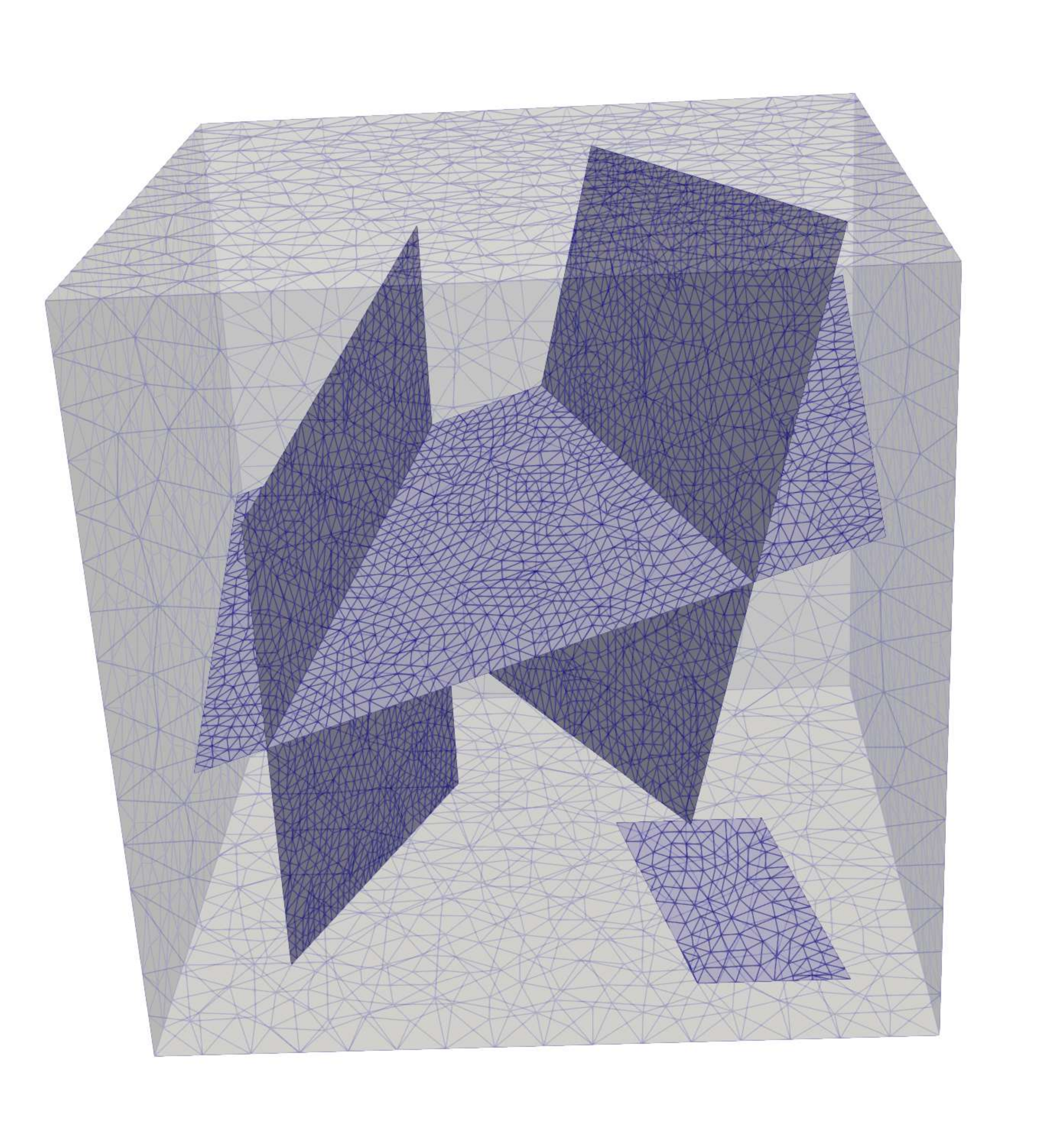}
\caption{Fracture network for the test case in Section~\ref{exm:intersecting.fracture}}
\label{fig:mesh_6k}  
\end{figure}
\begin{figure}[H]
\begin{subfigure}{0.33\textwidth}
  \includegraphics[width=\textwidth]{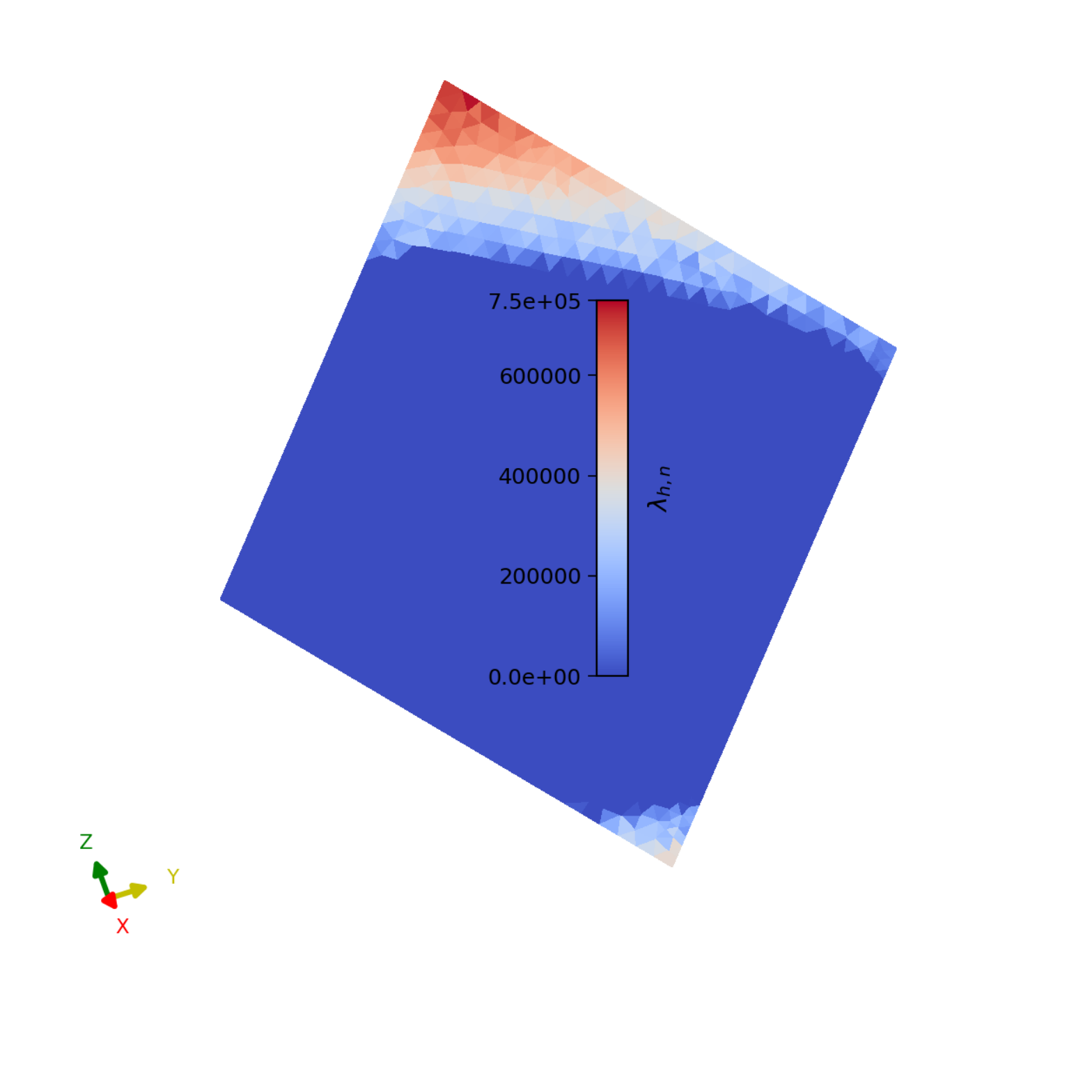}
  \caption{}
\end{subfigure}
\begin{subfigure}{0.33\textwidth}
  \includegraphics[width=\textwidth]{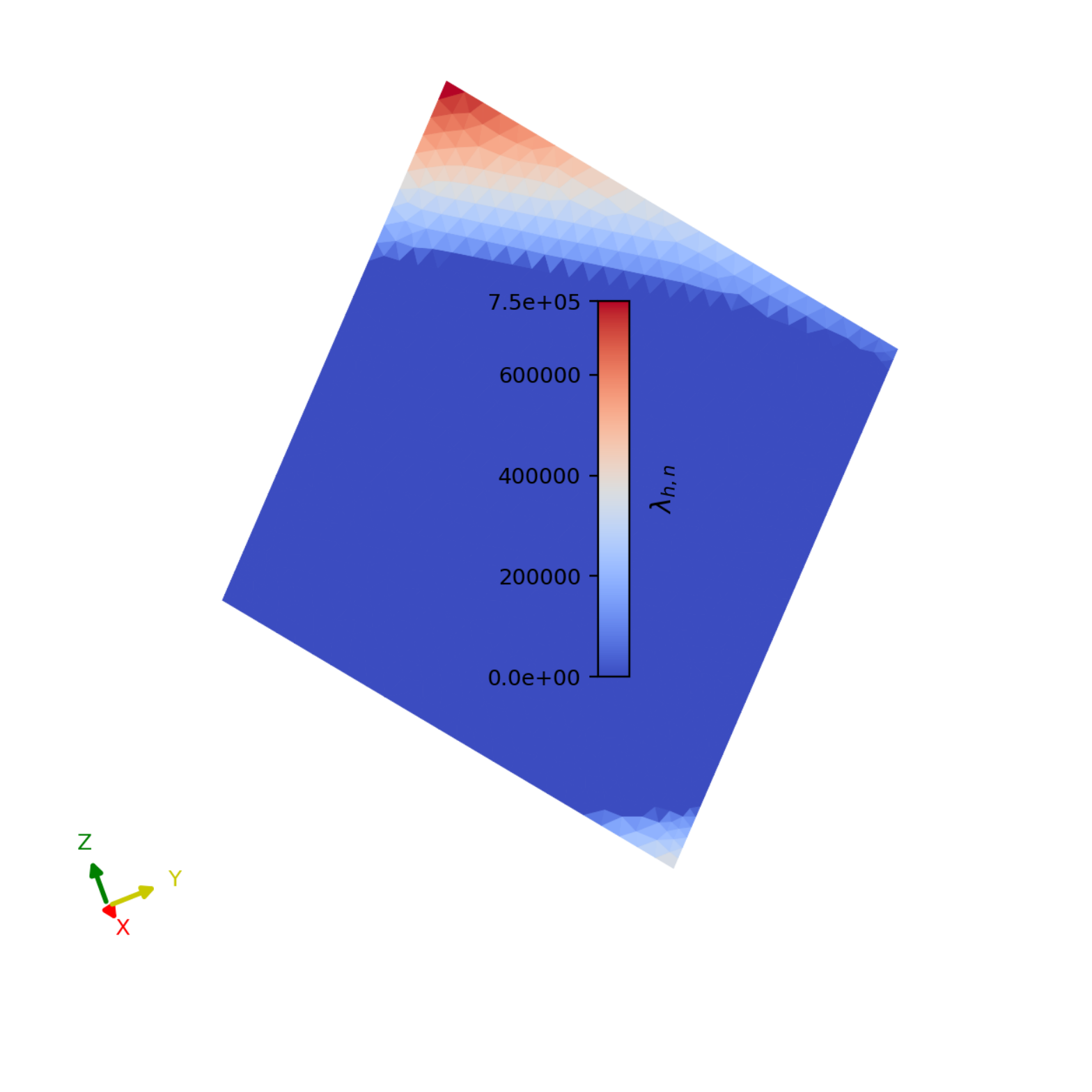}
  \caption{}
\end{subfigure}
\begin{subfigure}{0.33\textwidth}
  \includegraphics[width=\textwidth]{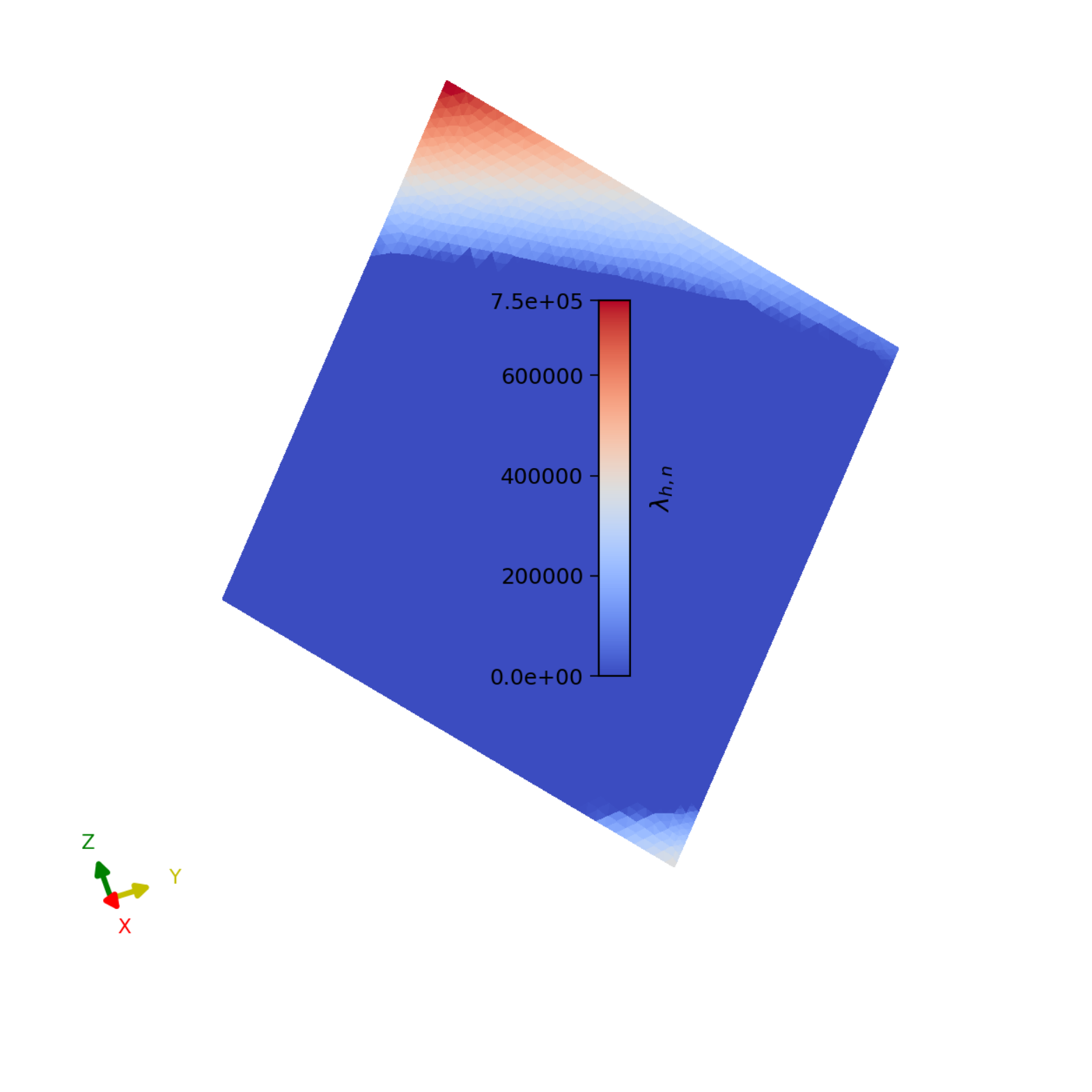}
  \caption{}
\end{subfigure}
\caption{Results of the test case in Section~\ref{exm:intersecting.fracture}: normal component of the discrete Lagrange multiplier on fracture F1 for tetrahedral meshes with the projected jump~\eqref{eq:jump.modify}. The panels correspond to (a) $(k,\ell)=(0,0)$, (b) $(k,\ell)=(1,0)$, and (c) $(k,\ell)=(1,1)$.}  
\label{fig:F1.normal.comp.Lag.Multi}  
\end{figure}

\begin{figure}[H]
\begin{subfigure}{0.33\textwidth}
  \includegraphics[width=\textwidth]{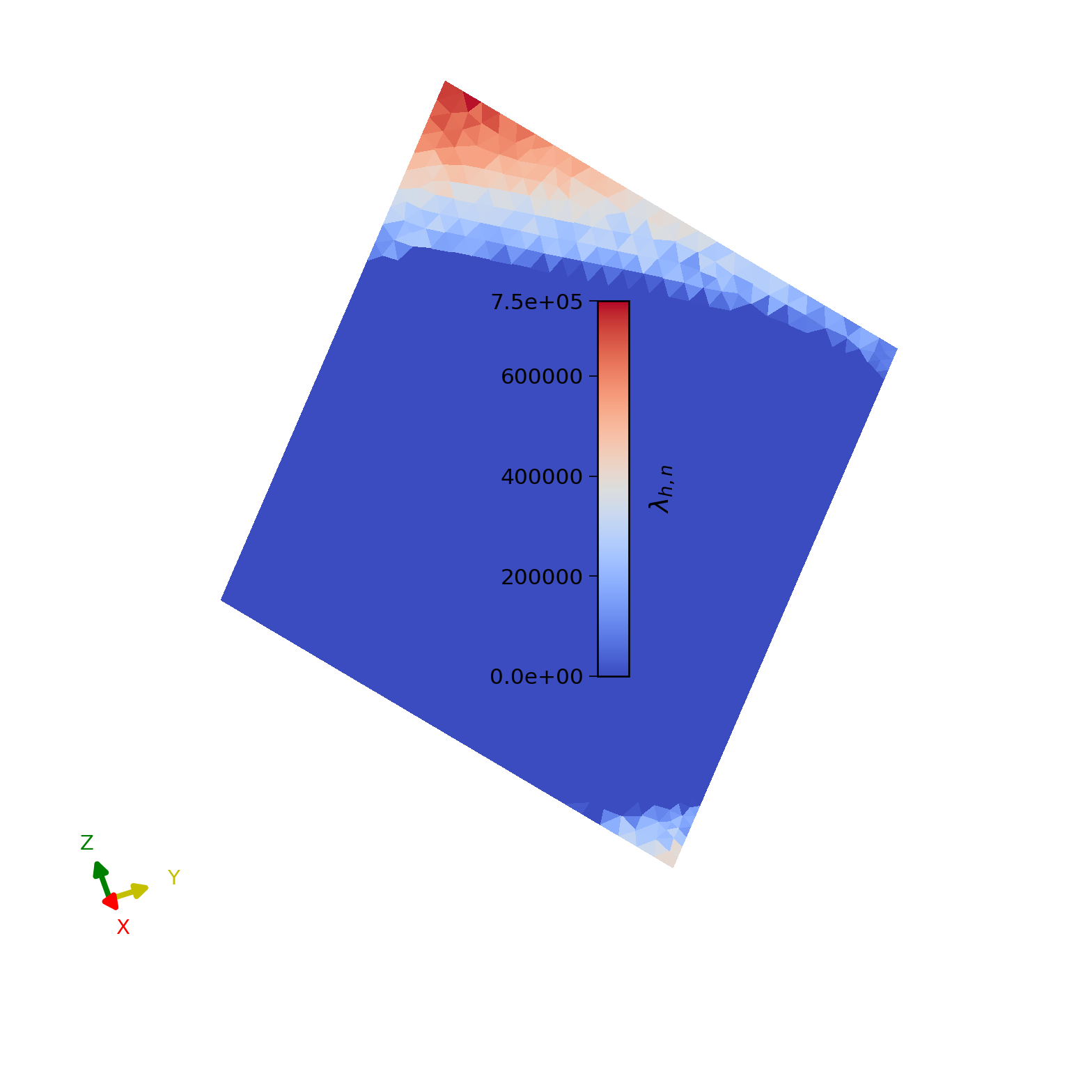}
  \caption{}
\end{subfigure}
\begin{subfigure}{0.33\textwidth}
  \includegraphics[width=\textwidth]{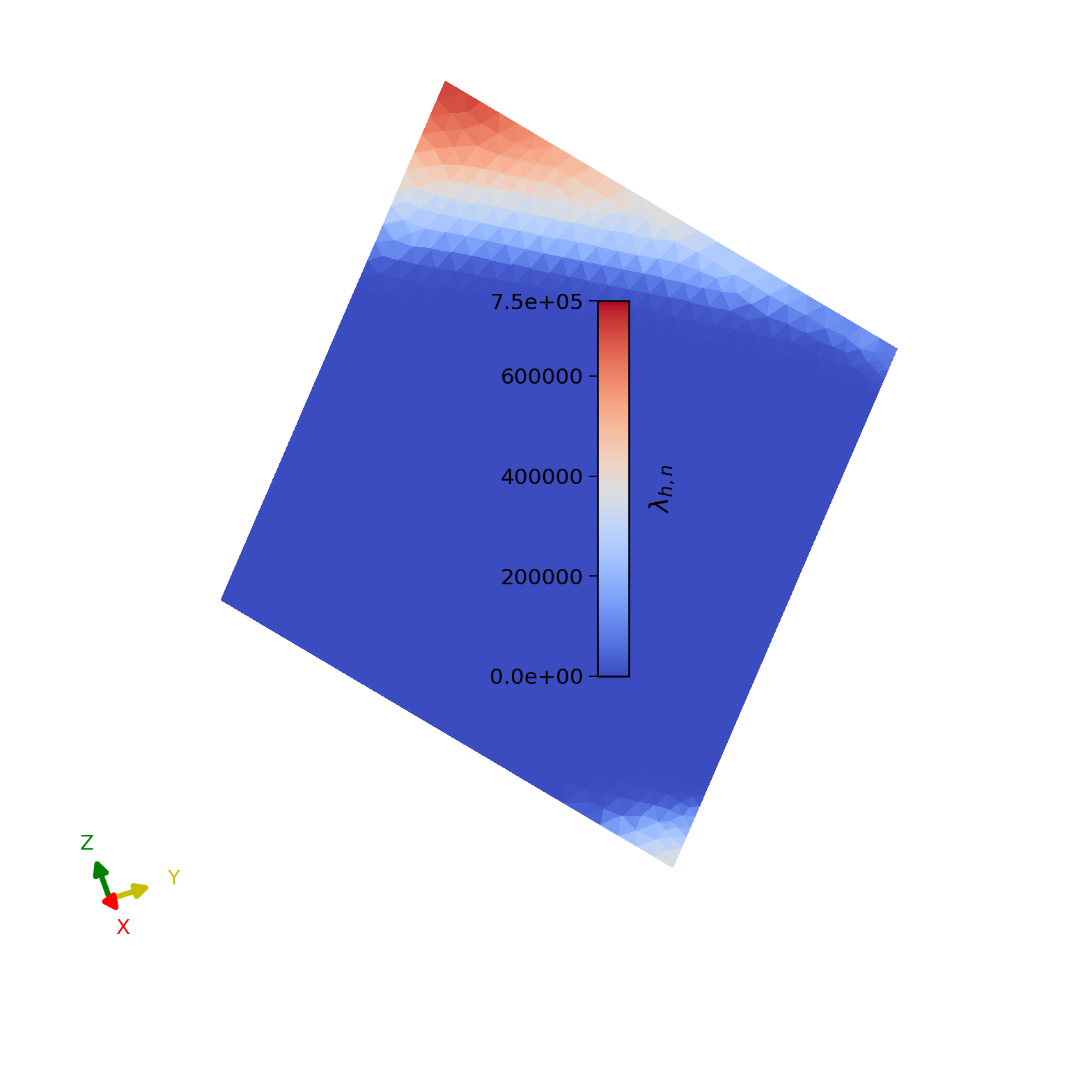}
  \caption{}
\end{subfigure}
\begin{subfigure}{0.33\textwidth}
  \includegraphics[width=\textwidth]{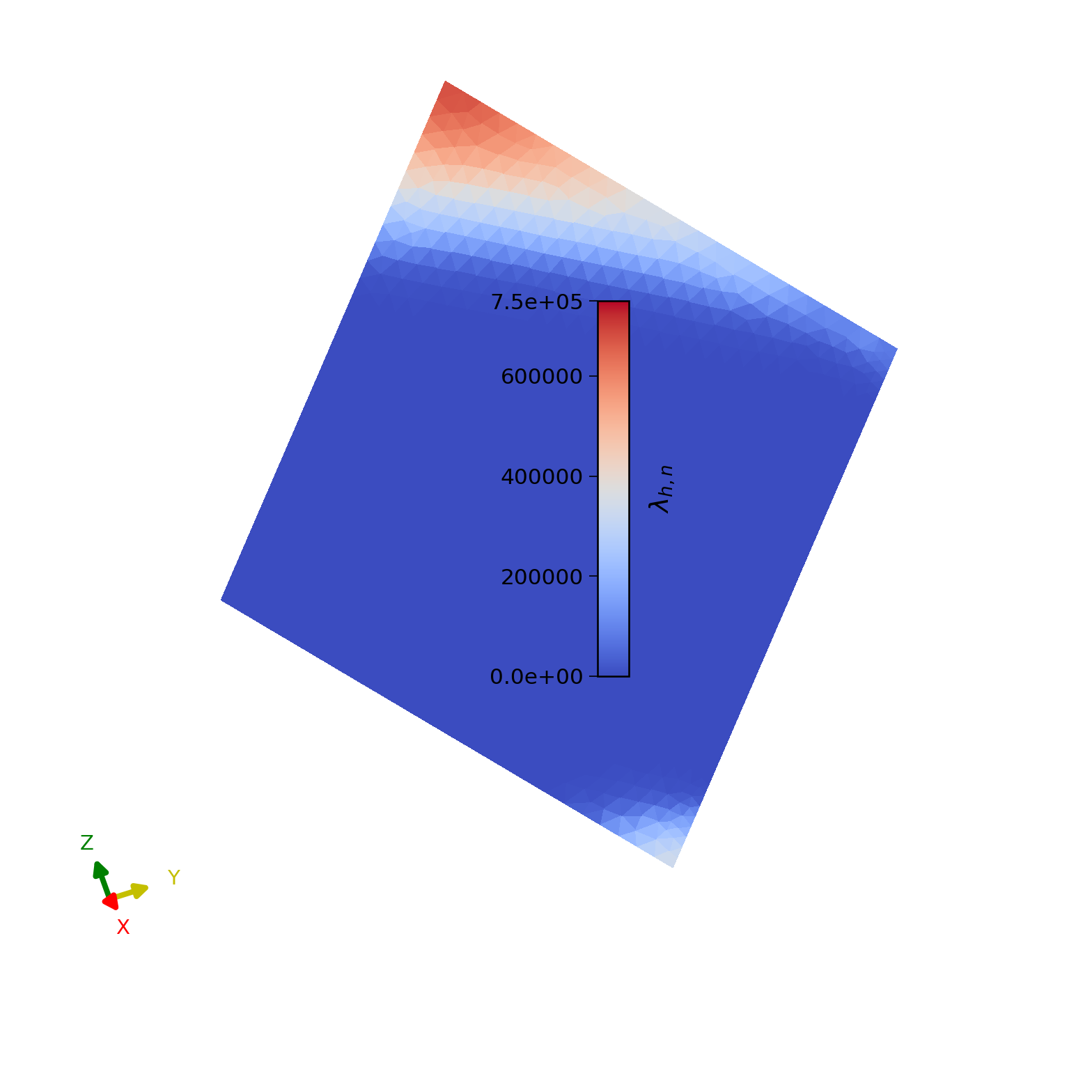}
  \caption{}
\end{subfigure}
\caption{Results of the test case in Section~\ref{exm:intersecting.fracture}: normal component of the discrete Lagrange multiplier on fracture F1 for $k=\ell=0$ with the standard jump~\eqref{eq:jump.n}. Panels (a), (b), and (c) correspond to $\delta=0$, $0.5$, and $1$, respectively.}
\label{fig:F1.normal.comp.Lag.Multi.smooth}  
\end{figure}


\begin{figure}[H]
\begin{subfigure}{0.50\textwidth}
  \includegraphics[width=\textwidth]{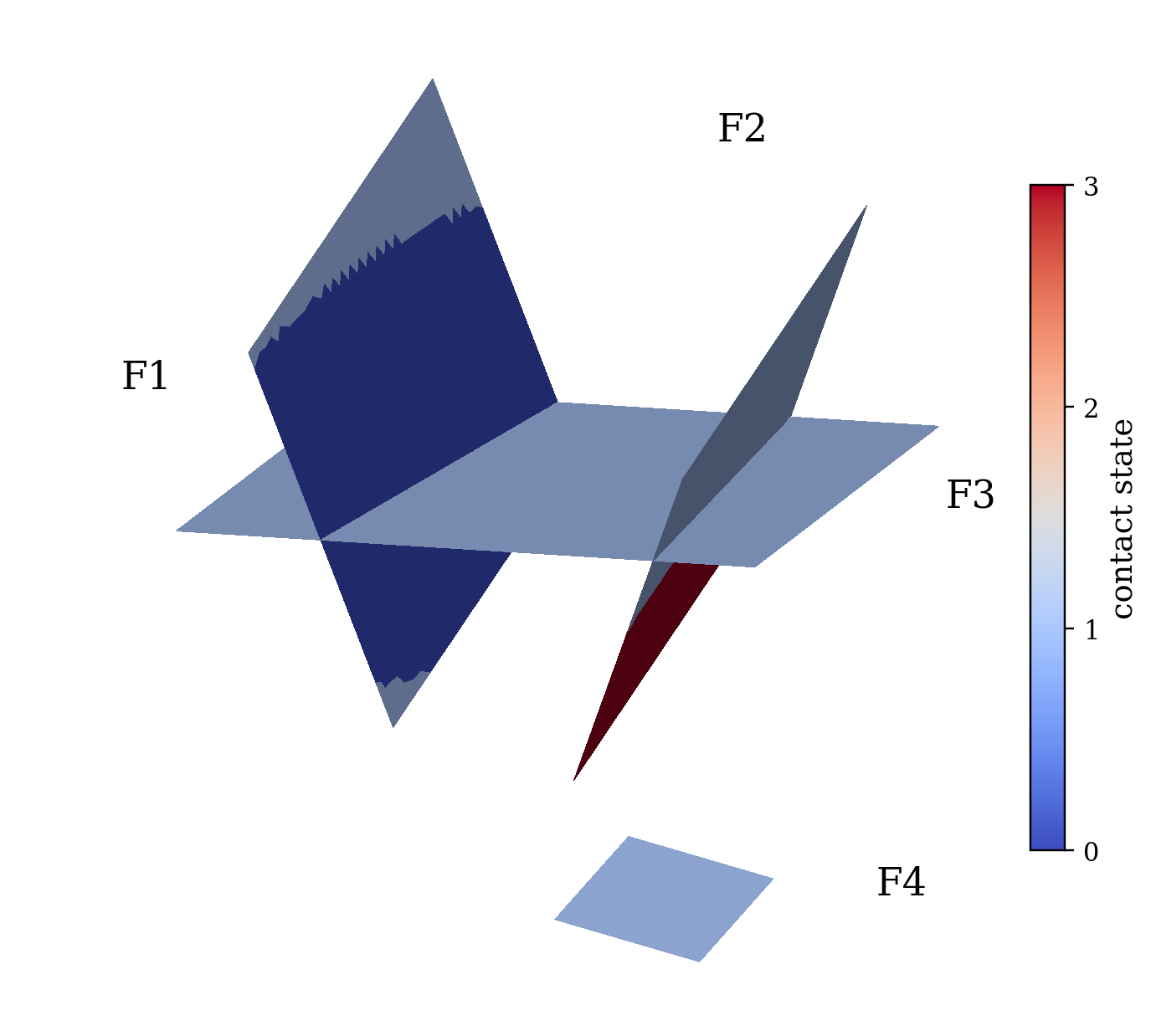}
  \caption{}
\end{subfigure}
\begin{subfigure}{0.50\textwidth}
  \includegraphics[width=\textwidth]{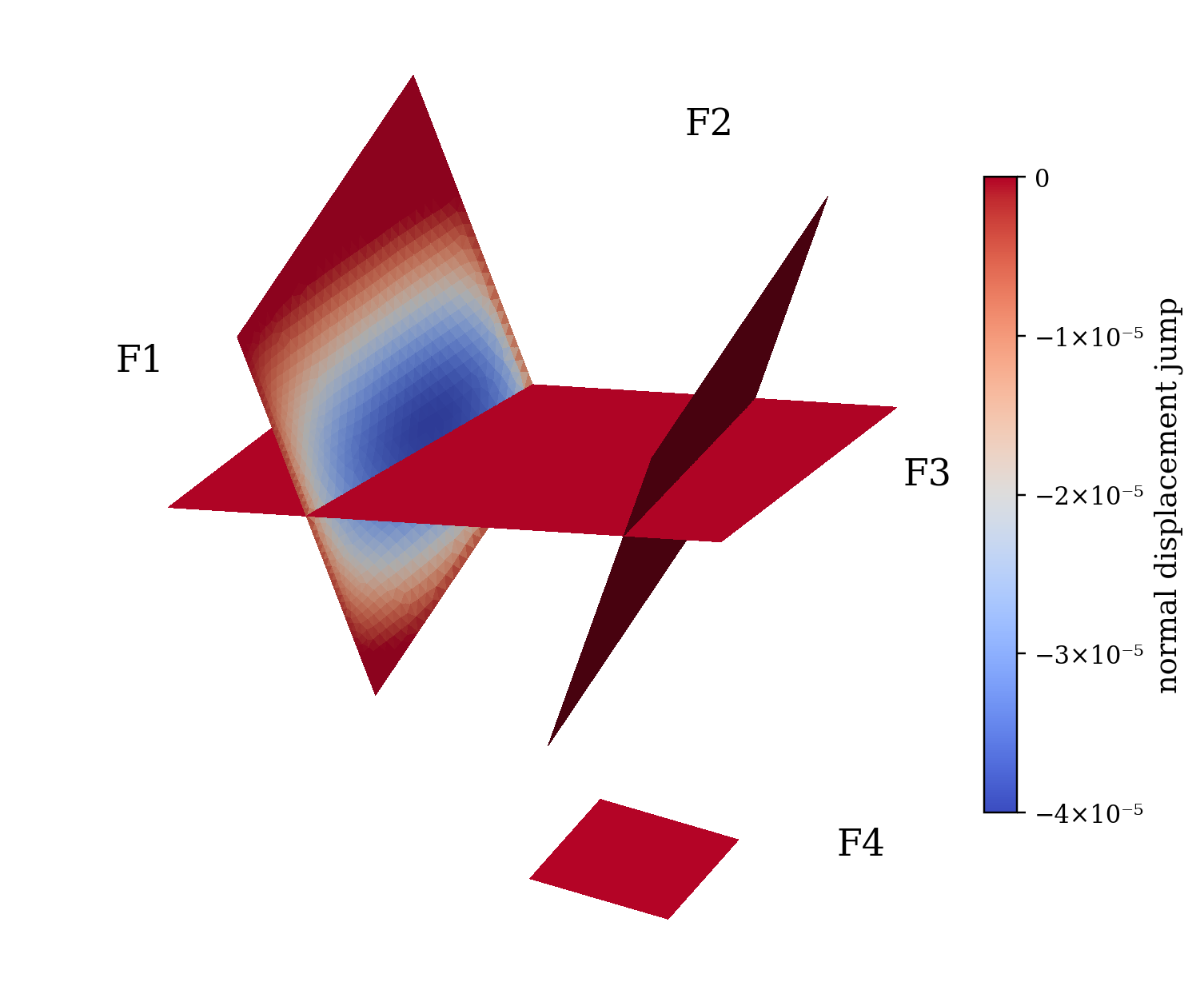}
  \caption{}
\end{subfigure}
\caption{Results for the test case in Section~\ref{exm:intersecting.fracture} for $k=1$ and $\ell=0$ with projected jump~\eqref{eq:jump.modify}: (a) contact state classification, where the values indicate: 0 if $\left|\blambda_{h,\tau}\right| < g$ and $\djumphn{\uS{u}} <0$; 1 if $\left|\blambda_{h,\tau}\right| < g$ and $\djumphn{\uS{u}} =0$; 2 if $\left|\blambda_{h,\tau}\right| = g$ and $\djumphn{\uS{u}} <0$; and 3 if $\left|\blambda_{h,\tau}\right| =g$ and $\djumphn{\uS{u}} =0$; and (b) normal displacement jump obtained using a mesh with 123k cells and 8.8k fracture faces.}
\label{fig:contact_normal} 
\end{figure}
\begin{figure}[H]
\centering
\begin{tikzpicture}
\begin{groupplot}[
  group style={group size=2 by 1, horizontal sep=1.6cm},
  width=0.50\textwidth, height=0.45\textwidth,
  xlabel={Degrees of Freedom}, xmode=log,
  legend style={font=\scriptsize, draw=none, fill=none},
  every axis plot/.append style={mark size=2.4pt}
]
\nextgroupplot[xmin=4000, xmax=2400000, title={(a)}, legend pos=north east]

\addplot[red, thick, dashed, mark=star, mark size=2.4pt] coordinates { (91320,-1.698630000000e-05) (196893,-1.722910000000e-05) (347757,-1.729270000000e-05) (582330,-1.726930000000e-05) (871029,-1.725600000000e-05) (1222893,-1.726700000000e-05) (1734255,-1.725680000000e-05) (2323335,-1.726320000000e-05) };
\addlegendentry{$k{=}2,\ell{=}1,\gamma_{0}=10^{-6}$}
\addplot[blue, thick, dashed, mark=triangle*, mark size=2.4pt] coordinates { (32463,-1.754820000000e-05) (69777,-1.714960000000e-05) (122997,-1.709110000000e-05) (205515,-1.714390000000e-05) (306981,-1.714670000000e-05) (430479,-1.716270000000e-05) (609507,-1.715400000000e-05) (815547,-1.718850000000e-05) (1087011,-1.718370000000e-05) (1348707,-1.718680000000e-05) (1703745,-1.718870000000e-05) };
\addlegendentry{$k{=}1,\ell{=}0$}
\addplot[teal, very thick, solid, mark=*, mark size=2.4pt] coordinates { (4020,-1.007190000000e-05) (7911,-9.965860000000e-06) (13023,-9.886850000000e-06) (20328,-1.210210000000e-05) (29019,-1.248690000000e-05) (39033,-1.322150000000e-05) (52077,-1.356630000000e-05) (66621,-1.367110000000e-05) (84714,-1.481210000000e-05) (103302,-1.497250000000e-05) (125859,-1.508510000000e-05) };
\addlegendentry{$k{=}\ell{=}0$}
\addplot[violet, very thick, solid, mark=square*, mark size=2.4pt] coordinates { (37287,-1.763840000000e-05) (78873,-1.722770000000e-05) (137373,-1.712920000000e-05) (227091,-1.718450000000e-05) (336909,-1.718160000000e-05) (469599,-1.720040000000e-05) (659475,-1.717760000000e-05) (877275,-1.721720000000e-05) (1162347,-1.720570000000e-05) (1439019,-1.720570000000e-05) };
\addlegendentry{$k{=}\ell{=}1,\gamma_{0}=10^{-7}$}

\nextgroupplot[xmin=4000, xmax=2400000, ymode=log,
  title={(b)}, legend pos=north east]

\addplot[red, thick, dashed, mark=star, mark size=2.4pt] coordinates { (91320,4.127220000000e+04) (196893,4.396780000000e+04) (347757,4.373990000000e+04) (582330,4.380560000000e+04) (871029,4.364210000000e+04) (1222893,4.376820000000e+04) (1734255,4.380290000000e+04) (2323335,4.379410000000e+04) };
\addlegendentry{$k{=}2,\ell{=}1,\gamma_{0}=10^{-6}$}
\addplot[blue, thick, dashed, mark=triangle*, mark size=2.4pt] coordinates { (32463,2.732660000000e+04) (69777,3.360660000000e+04) (122997,4.239900000000e+04) (205515,4.287900000000e+04) (306981,4.331840000000e+04) (430479,4.354270000000e+04) (609507,4.372960000000e+04) (815547,4.363110000000e+04) (1087011,4.378700000000e+04) (1348707,4.379700000000e+04) (1703745,4.383820000000e+04) };
\addlegendentry{$k{=}1,\ell{=}0$}
\addplot[teal, very thick, solid, mark=*, mark size=2.4pt] coordinates { (4020,8.276750000000e+04) (7911,7.408180000000e+04) (13023,7.941020000000e+04) (20328,5.402230000000e+04) (29019,6.369990000000e+04) (39033,5.860360000000e+04) (52077,6.067360000000e+04) (66621,5.479410000000e+04) (84714,5.213160000000e+04) (103302,5.207690000000e+04) (125859,5.038070000000e+04) };
\addlegendentry{$k{=}\ell{=}0$}
\addplot[violet, very thick, solid, mark=square*, mark size=2.4pt] coordinates { (37287,2.886050000000e+04) (78873,3.405260000000e+04) (137373,4.313220000000e+04) (227091,4.337210000000e+04) (336909,4.372430000000e+04) (469599,4.391640000000e+04) (659475,4.403030000000e+04) (877275,4.388150000000e+04) (1162347,4.395760000000e+04) (1439019,4.393420000000e+04) };
\addlegendentry{$k{=}\ell{=}1,\gamma_{0}=10^{-7}$}
\end{groupplot}
\end{tikzpicture}
\caption{Results of the test case in  Section~\ref{exm:intersecting.fracture}: comparison of $(k,\ell)=(0,0),(1,0),(1,1),(2,1)$ on fracture F1 with projected jump~\eqref{eq:jump.modify}: (a) mean normal displacement
jump and (b) mean normal component of the discrete Lagrange multiplier. Solid thick lines denote $k=\ell$; dashed lines denote $\ell=k-1$.}
\label{fig:normal.fracture1.multi}
\end{figure}
\begin{figure}[H]
\centering
\begin{tikzpicture}
\begin{groupplot}[
  group style={group size=2 by 1, horizontal sep=1.6cm},
  width=0.50\textwidth, height=0.45\textwidth,
  xlabel={Degrees of Freedom}, xmode=log,
  legend style={font=\scriptsize, draw=none, fill=none},
  every axis plot/.append style={mark size=2.4pt}
]
\nextgroupplot[xmin=4000, xmax=2400000, title={(a)}, legend pos=north east]

\addplot[red, thick, dashed, mark=star, mark size=2.4pt] coordinates { (91320,1.880140000000e+06) (196893,1.879170000000e+06) (347757,1.878980000000e+06) (582330,1.878840000000e+06) (871029,1.879030000000e+06) (1222893,1.879020000000e+06) (1734255,1.879180000000e+06) (2323335,1.879220000000e+06) };
\addlegendentry{$k{=}2,\ell{=}1,\gamma_{0}=10^{-6}$}
\addplot[blue, thick, dashed, mark=triangle*, mark size=2.4pt] coordinates { (32463,1.860220000000e+06) (69777,1.872270000000e+06) (122997,1.878140000000e+06) (205515,1.878380000000e+06) (306981,1.878890000000e+06) (430479,1.879400000000e+06) (609507,1.879580000000e+06) (815547,1.879480000000e+06) (1087011,1.879510000000e+06) (1348707,1.879630000000e+06) (1703745,1.879560000000e+06) };
\addlegendentry{$k{=}1,\ell{=}0$}
\addplot[teal, very thick, solid, mark=*, mark size=2.4pt] coordinates { (4020,2.126240000000e+06) (7911,2.033810000000e+06) (13023,1.971420000000e+06) (20328,1.974750000000e+06) (29019,1.935720000000e+06) (39033,1.931320000000e+06) (52077,1.926950000000e+06) (66621,1.926690000000e+06) (84714,1.917930000000e+06) (103302,1.914610000000e+06) (125859,1.914980000000e+06) };
\addlegendentry{$k{=}\ell{=}0$}
\addplot[violet, very thick, solid, mark=square*, mark size=2.4pt] coordinates { (37287,1.855350000000e+06) (78873,1.870930000000e+06) (137373,1.878210000000e+06) (227091,1.878640000000e+06) (336909,1.879120000000e+06) (469599,1.879680000000e+06) (659475,1.879840000000e+06) (877275,1.879610000000e+06) (1162347,1.879680000000e+06) (1439019,1.879700000000e+06) };
\addlegendentry{$k{=}\ell{=}1,\gamma_{0}=10^{-7}$}

\nextgroupplot[xmin=4000, xmax=2400000, title={(b)}, legend pos=north east]

\addplot[red, thick, dashed, mark=star, mark size=2.4pt] coordinates { (91320,9.014370000000e+06) (196893,9.013870000000e+06) (347757,9.013020000000e+06) (582330,9.012990000000e+06) (871029,9.013220000000e+06) (1222893,9.013370000000e+06) (1734255,9.013520000000e+06) (2323335,9.013600000000e+06) };
\addlegendentry{$k{=}2,\ell{=}1,\gamma_{0}=10^{-6}$}
\addplot[blue, thick, dashed, mark=triangle*, mark size=2.4pt] coordinates { (32463,9.014870000000e+06) (69777,9.015960000000e+06) (122997,9.018150000000e+06) (205515,9.016520000000e+06) (306981,9.016160000000e+06) (430479,9.015660000000e+06) (609507,9.015590000000e+06) (815547,9.015180000000e+06) (1087011,9.015040000000e+06) (1348707,9.014950000000e+06) (1703745,9.014960000000e+06) };
\addlegendentry{$k{=}1,\ell{=}0$}
\addplot[teal, very thick, solid, mark=*, mark size=2.4pt] coordinates { (4020,9.393860000000e+06) (7911,9.257340000000e+06) (13023,9.195210000000e+06) (20328,9.152190000000e+06) (29019,9.137430000000e+06) (39033,9.115280000000e+06) (52077,9.104460000000e+06) (66621,9.092400000000e+06) (84714,9.083570000000e+06) (103302,9.074350000000e+06) (125859,9.068820000000e+06) };
\addlegendentry{$k{=}\ell{=}0$}
\addplot[violet, very thick, solid, mark=square*, mark size=2.4pt] coordinates { (37287,8.998610000000e+06) (78873,9.008270000000e+06) (137373,9.014380000000e+06) (227091,9.014250000000e+06) (336909,9.014640000000e+06) (469599,9.014480000000e+06) (659475,9.014620000000e+06) (877275,9.014410000000e+06) (1162347,9.014370000000e+06) (1439019,9.014390000000e+06) };
\addlegendentry{$k{=}\ell{=}1,\gamma_{0}=10^{-7}$}
\end{groupplot}
\begin{axis}[
  at={($(group c1r1.south east)!0.5!(group c2r1.south west)+(0,-1.4cm)$)},
  anchor=north, width=0.50\textwidth, height=0.45\textwidth,
  xlabel={Degrees of Freedom}, xmode=log,
  xmin=4000, xmax=2400000, ymin=1e7, ymax=1.3e7, ymode=log,
  title={(c)},
  legend style={at={(0.45,0.19)}, anchor= west,
                font=\scriptsize, draw=none, fill=none},
  every axis plot/.append style={mark size=2.4pt}
]

\addplot[red, thick, dashed, mark=star, mark size=2.4pt] coordinates { (91320,1.150180000000e+07) (196893,1.154780000000e+07) (347757,1.155780000000e+07) (582330,1.155430000000e+07) (871029,1.155480000000e+07) (1222893,1.155390000000e+07) (1734255,1.155390000000e+07) (2323335,1.155370000000e+07) };
\addlegendentry{$k{=}2,\ell{=}1$}
\addplot[blue, thick, dashed, mark=triangle*, mark size=2.4pt] coordinates { (32463,1.151490000000e+07) (69777,1.148900000000e+07) (122997,1.153300000000e+07) (205515,1.154100000000e+07) (306981,1.154500000000e+07) (430479,1.154950000000e+07) (609507,1.155070000000e+07) (815547,1.155000000000e+07) (1087011,1.155050000000e+07) (1348707,1.155060000000e+07) (1703745,1.155180000000e+07) };
\addlegendentry{$k{=}1,\ell{=}0$}
\addplot[teal, very thick, solid, mark=*, mark size=2.4pt] coordinates { (4020,1.063480000000e+07) (7911,1.096470000000e+07) (13023,1.126500000000e+07) (20328,1.134570000000e+07) (29019,1.138380000000e+07) (39033,1.144040000000e+07) (52077,1.143730000000e+07) (66621,1.144550000000e+07) (84714,1.146910000000e+07) (103302,1.147370000000e+07) (125859,1.148860000000e+07) };
\addlegendentry{$k{=}\ell{=}0$}
\addplot[violet, very thick, solid, mark=square*, mark size=2.4pt] coordinates { (37287,1.152410000000e+07) (78873,1.148640000000e+07) (137373,1.152950000000e+07) (227091,1.153840000000e+07) (336909,1.154360000000e+07) (469599,1.154810000000e+07) (659475,1.155000000000e+07) (877275,1.154980000000e+07) (1162347,1.155060000000e+07) (1439019,1.155070000000e+07) };
\addlegendentry{$k{=}\ell{=}1, ,\gamma_{0}=10^{-7}$}
\end{axis}
\end{tikzpicture}
\caption{Results of the test case in Section~\ref{exm:intersecting.fracture}: mean normal component of the discrete Lagrange multiplier
on fractures (a) F2, (b) F3, and (c) F4 for
$(k,\ell)=(0,0),(1,0),(1,1),(2,1)$ with projected
jump~\eqref{eq:jump.modify}. Solid thick lines denote $k=\ell$;
dashed lines denote $\ell=k-1$.}
\label{fig:lambda.fractures234.multi}
\end{figure}

\begin{table}[H]
\centering
\caption{Results of the test case in Section~\ref{exm:intersecting.fracture}: Number of Newton iterations required to converge within
the desired threshold tolerance of $10^{-12}$.}
\label{tab:dfm.newton.comparison}
\small
\setlength{\tabcolsep}{6pt}
\begin{tabular}{llrrrrr}
\toprule
\multirow{2}{*}{$(k,\ell,\gamma_{0})=(0,0, 10^{-6})$}
 & Nb. Newton iterations & 7 & 8 & 8 & 8 & 8 \\
 & Degrees of Freedom & 4020 & 7911 & 66621 & 84714 & 103302 \\
\hline
\multirow{2}{*}{$(k,\ell,\gamma_{0})=(0,0, 10^{-8})$}
 & Nb Newton iterations & 6 & 6 & 7 & 8 & 8 \\
 & Degrees of Freedom & 4020 & 7911 & 66621 & 84714 & 103302 \\
 \hline
\multirow{2}{*}{$(k,\ell,\gamma_{0})=(1,0,10^{-6})$}
 & Nb Newton iterations & 5 & 5 & 7 & 6 & 6 \\
 & Degrees of Freedom & 32463 & 69777 & 815547 & 1087011 & 1348707\\
 \hline
\multirow{2}{*}{$(k,\ell,\gamma_{0})=(1,1,10^{-7})$}
 & Nb Newton iterations & 9 & 10 & 9 & 9 & 10 \\& Degrees of Freedom & 37287 & 78873 & 877275 & 1162347 & 1439019\\
 \hline
\multirow{2}{*}{$(k,\ell,\gamma_{0})=(1,1,10^{-8})$}
  & Nb Newton iterations & 7 & 7 & 8 & 8 & 8 \\
 & Degrees of Freedom & 37287 & 78873 & 877275 & 1162347 & 1439019\\
\bottomrule
\end{tabular}
\end{table}

\begin{remark}[Choice of $\gamma_0$ and $\delta$]
\label{rem:gamma0.delta}
The results shown in Figures~\ref{fig:tresca.tet}-\ref{fig:tresca.hex} correspond to $\gamma_0 = 0.1$ and $\delta = 10$. Note that the scheme exhibits similar convergence
behaviour for $\delta \in \{1, 10, 100, 1000\}$. In contrast, for $\delta=0$, the full-jump scheme exhibits essentially the same convergence behaviour as the projected-jump scheme, suggesting that stabilisation may not be necessary in practice. Table~\ref{tab:dfm.newton.comparison} addresses $\gamma_0$: at both $(k,\ell)=(0,0)$ and $(1,1)$, reducing $\gamma_0$ by one or two orders of magnitude never increases the Newton iteration count and typically decreases it (by up to $2$ and $3$
iterations, respectively), so Newton's method is robust to $\gamma_0$ over this range. The table also confirms that $(k,\ell)=(1,0)$ needs both fewer degrees of freedom and fewer Newton iterations ($5$--$7$) than $(0,0)$ at either $\gamma_0$ tested ($6$-$8$ at best), i.e., the higher-order scheme is more efficient by both metrics at once.
\end{remark}

\section{Existence and uniqueness of the discrete solution}\label{sec:Existence.uniqueness}

\subsection{Preliminary lemmas}

To prove the existence and uniqueness of a discrete solution in Proposition~\ref{wellposedness}, we need to establish a few lemmas.

\begin{lemma}[Consistency of the reconstruction operators]
\label{lem:prop.reconstructions}
The gradient and displacement reconstruction operators satisfy the 
following polynomial consistency properties:
\begin{alignat}{2}
\UpS{\Ksig}\IhS\bo{q} &= \bo{q} &&\qquad \forall K \in \cM,\quad \forall \sigma \in \cF_K,\quad \forall \bo{q} \in \Poly{k+1}(\sigma)^{3},\label{eq:UpS.face_1a}\\
\Pi^{k-1}_{\sigma}\Upsilon_{\Ksig}^{k+1} \uvec{v}_{h}&= \bo{v}_{\Ksig} && \qquad \forall K \in \cM,\quad \forall \sigma\in \faces_{K}, \quad \forall \uvec{v}_{h} \in \ddgsk.\label{eq:Pi.k.Ups}
\end{alignat}
By slight abuse of notation, the interpolator $\IhS$ applied above to a function defined only on a cell/face/edge corresponds to only interpolating on the degrees of freedom associated with that cell/face/edge (and its boundary); note that the corresponding gradient or displacement reconstruction depends only on these degrees of freedom.
\end{lemma}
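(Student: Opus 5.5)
The plan is to prove the two identities in reverse order, since \eqref{eq:Pi.k.Ups} is purely algebraic and the bubble part of \eqref{eq:UpS.face_1a} reuses the same kind of projection identity.

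\emph{Proof of \eqref{eq:Pi.k.Ups}.} Fix $\bo{w}\in\Poly{k-1}(\sigma)^3$. I would build a test function $\bos{\eta}\in\big[(\bx-\bx_\sigma)\Poly{k-1}(\sigma)\big]^3\subset\big[(\bx-\bx_\sigma)\Poly{k+1}(\sigma)\big]^3$ with $\DIV\bos{\eta}=\bo{w}$ (row-wise). Such an $\bos{\eta}$ exists because, for each $r\ge0$, the map $p\mapsto\DIV\big((\bx-\bx_\sigma)p\big)$ is an isomorphism of $\Poly{r}(\sigma)$. This follows from the Euler identity: on homogeneous components centred at $\bx_\sigma$ it acts as multiplication by $(2+\text{degree})$.

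Since $\bos{\eta}$ has degree $\le k$, it is also admissible in the definition of $\mathbb{G}^k_{\Ksig}$. Taking $\bos{\xi}=\bos{\eta}$ there and substituting into \eqref{eq:Up.face}, the edge terms cancel and leave
\[
\int_\sigma\Upsilon^{k+1}_{\Ksig}\uvec{v}_h\cdot\bo{w}=\int_\sigma\bo{v}_{\cK\sigma}\cdot\bo{w}.
\]
Because $\bo{w}$ is arbitrary in $\Poly{k-1}(\sigma)^3$, this gives \eqref{eq:Pi.k.Ups}, as in \cite{ddr-variant}.

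\emph{Proof of \eqref{eq:UpS.face_1a}.} I would first show $\Upsilon^{k+1}_{\Ksig}\underline{\mathbb I}_h\bo{q}=\bo{q}$ for the bubble-free part, in three steps.
\begin{itemize}
\item On each edge $e$, $\bo{q}|_e\in\Poly{k+1}(e)^3$ has the same vertex values and the same $\Pi^{k-1}_e$-projection as prescribed by \eqref{eq:def.UpS.Ke}. These data determine a unique element of $\Poly{k+1}(e)^3$, so $\Upsilon^{k+1}_{Ke}\underline{\mathbb I}_h\bo{q}=\bo{q}|_e$.
\item An integration by parts on $\sigma$, using $\Pi^{k-1}_\sigma\bo{q}$ against $\DIV\bos{\xi}\in\Poly{k-1}(\sigma)^3$, gives $\mathbb{G}^k_{\Ksig}\underline{\mathbb I}_h\bo{q}=\nabla_\sigma\bo{q}$.
\item Plugging these into \eqref{eq:Up.face} shows that $\Upsilon^{k+1}_{\Ksig}\underline{\mathbb I}_h\bo{q}-\bo{q}$ is $L^2(\sigma)$-orthogonal to $\DIV\big[(\bx-\bx_\sigma)\Poly{k+1}(\sigma)\big]^3$. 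By the isomorphism above with $r=k+1$, this space is all of $\Poly{k+1}(\sigma)^3$, so the difference vanishes.
\end{itemize}

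For $\sigma\notin\faces_{\Gamma,K}^+$ or $\ell\neq k$, this already proves \eqref{eq:UpS.face_1a} by \eqref{eq:UpS.face}. In the remaining case, the correction term in \eqref{eq:UpS.face} evaluated at $\IhS\bo{q}$ equals
\[
\Pi^{k-1}_\sigma\bo{q}+\Pi^\ell_\sigma\big(\bo{q}-\Pi^{k-1}_\sigma\bo{q}\big)-\Pi^\ell_\sigma\bo{q}=0,
\]
using $\Pi^\ell_\sigma\Pi^{k-1}_\sigma=\Pi^{k-1}_\sigma$.

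The only step needing care is the divergence isomorphism on $(\bx-\bx_\sigma)\Poly{r}(\sigma)$ for matrix-valued ($3\times2$) fields. I would handle it row by row, where it reduces to the scalar result already used in \cite{ddr-variant}.
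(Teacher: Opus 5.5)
Your proof is correct, and for the part that is new in this paper it is the paper's own argument: the bubble correction in \eqref{eq:UpS.face} evaluated at $\IhS\bo{q}$ reduces to $\Pi^{k-1}_\sigma\bo{q}+\Pi^{\ell}_\sigma(\bo{q}-\Pi^{k-1}_\sigma\bo{q})-\Pi^{\ell}_\sigma\bo{q}=\bo{0}$ because $\Pi^\ell_\sigma\Pi^{k-1}_\sigma=\Pi^{k-1}_\sigma$. The paper cites \cite[Eq.~(4.7)]{DKMR25} for $\Upsilon^{k+1}_{\Ksig}\underline{\mathbb I}_h\bo{q}=\bo{q}$ and gives no argument for \eqref{eq:Pi.k.Ups}, whereas you derive both directly from the definitions (edge unisolvence, $\mathbb{G}^k_{\Ksig}\underline{\mathbb I}_h\bo{q}=\nabla_\sigma\bo{q}$, and the isomorphism $\DIV:(\bx-\bx_\sigma)\Poly{r}(\sigma)\to\Poly{r}(\sigma)$), so your version is self-contained.
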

\begin{proof}
We begin by proving \eqref{eq:UpS.face_1a}. Let $K\in \cells$ and let
$\sigma\in\faces_{\Gamma,K}^+$ of $K$ on the positive side of the fracture. Recalling the definition \eqref{eq:UpS.face}, we set $\uS{v}=\IhS \bo{q},$ where $\bo{q}\in \Poly{k+1}(\sigma)^3.$ Then,
\begin{align*}
\UpS{\Ksig}\IhS \bo{q}&=\Upsilon^{k+1}_{\Ksig}\IhS \bo{q} + \krond\Big[\I_{\cK\sigma}\bo{q} +\I_{\Ksig}^{\bub}\bo{q} -\Pi^\ell_{\sigma}\Upsilon^{k+1}_{\Ksig}\IhS\bo{q}\Big].
\\
\overset{\eqref{eq:DOF.Ih},\eqref{eq:IhS.decomposition}}&= 
\Upsilon^{k+1}_{\Ksig}\underline{\mathbb I}_h \bo{q} + \krond\Big[\Pi^{k-1}_{\sigma}\bo{q} +\Pi^{\ell}_{\sigma}(\bo{q}-\Pi^{k-1}_{\sigma}\bo{q}) -\Pi^\ell_{\sigma}\Upsilon^{k+1}_{\Ksig}\underline{\mathbb I}_h\bo{q}\Big]\nonumber\\
&= \bo{q} + \krond\Big[\Pi^{k-1}_{\sigma}\bo{q} +\Pi^{\ell}_{\sigma}(\bo{q}-\Pi^{k-1}_{\sigma}\bo{q}) -\Pi^\ell_{\sigma}\bo{q}\Big]=\bo{q}.
\end{align*} 
In the third equality, we used the polynomial consistency property $\Upsilon^{k+1}_{\Ksig}\underline{\mathbb I}_h \bo{q}=\bo{q},$ see \cite[Eq.~(4.7)]{DKMR25}. In the last step, we used the projector property $\Pi^{\ell}_{\sigma}\circ \Pi^{k-1}_{\sigma}=\Pi^{k-1}_{\sigma}$, valid since $\ell\ge k-1$.

Next, let $\sigma \notin \faces_{\Gamma,K}^+.$ In this case, by definition \eqref{eq:UpS.face_1} of $\UpS{\Ksig}$ and again invoking \cite[Eq.~(4.7)]{DKMR25}, we obtain $\UpS{\Ksig}\IhS \bo{q}=\Upsilon^{k+1}_{\Ksig}\underline{\mathbb I}_h \bo{q}=\bo{q}.$ This completes the proof of \eqref{eq:UpS.face_1a}.

The lemma only states the properties that are explicitly invoked in the analysis. Using \eqref{eq:UpS.face_1a} and the same arguments as in \cite[Lemma~3]{ddr-variant}, one also shows that $\GS{K}\IhS\bo{q}=\nabla\bo{q}$ and $\UpS{K}\IhS\bo{q}=\bo{q}$ for all $K\in\cM$ and all $\bo{q}\in\Poly{k+1}(K)^3$. Combined with the polynomial consistency of the edge reconstruction \cite[Eq.~(4.6)]{DKMR25}, this gives $\mathbb{S}_K(\IhS\bo{q},\IhS\bo{q})=0$.
\end{proof}

\begin{lemma}[Discrete lifting]\label{rem:lifting}
There exists $C_{\mathrm{R}},C_\Gamma,C_{\mathrm{D}}>0$  constants depending only on $\Omega$, $\Gamma$, the mesh regularity parameters, and the polynomial degrees $k$ and $\ell$ (and, in particular, independent of $h$, $\gamma_0,$ and of the Lam\'e coefficients) such that, for every $\bos{\mu}_{h}\in \Ms$, there exists $\uS{R} \in \ddgskS$ satisfying
\begin{subequations}\label{eq:prop.Rh}
\begin{align}
\label{eq:prop.Rh.jump}
\Pi_{\cF_{\Gamma}}^{\ell}\djumph{\uS{R}}&=\bos{\mu}_{h}, \\
\label{eq:prop.Rh.norm.energy}
\NORM{1,h}{\uS{R}}& 
 \leq C_{\mathrm{R}} h_{\Gamma}^{-\frac{1}{2}}\NORM{L^2(\Gamma)}{\bos{\mu}_{h}} ,\\
\label{eq:prop.Rh.norm.gamma}
\NORM{L^{2}(\Gamma)}{\jump{\uS{R}}_{h}} &\leq C_{\Gamma}\NORM{L^{2}(\Gamma)}{\bos{\mu}_{h}},\\
\label{eq:prop.Rh.ups.bound}
\left(\sum_{\sigma\in \faces_{\Gamma}}\NORM{L^{2}(\sigma)}{\Upsilon_{\Ksig}^{k+1}\uvec{R}_{h}-\Pi_{\sigma}^{\ell}\Upsilon_{\Ksig}^{k+1}\uvec{R}_{h}}^{2}\right)^{\frac{1}{2}} &\leq C_{\mathrm{D}}  \left(\sum_{e\in \edges_{\Gamma}^{\text{int}}}h_e \NORM{L^{2}(e)} {\jump{\bos{\mu}_{h}}_{e}}^2\right)^{\frac{1}{2}}.
\end{align}
\end{subequations}
\end{lemma}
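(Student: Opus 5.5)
We build $\uS{R}$ locally on the positive side of each fracture face $\sigma\in\faces_\Gamma$, with $K$ the cell on the $+$ side and $L$ the cell on the $-$ side. All degrees of freedom of $\uS{R}$ vanish except those attached to $K\sigma$: the face unknown $\bo{R}_{\cK\sigma}$ and, when $\ell=k$, the bubble $\bo{R}^\bub_{K\sigma}$. Vertex and edge unknowns are zero, and so are cell unknowns. Since $\sigma$ is a fracture face, the class $\cK\sigma$ contains only $K$, so these unknowns are local to $K$ and independent across faces. With all vertex and edge unknowns zero, every $\Upsilon^{k+1}_{Ke}\uvec{R}_h$ vanishes.

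\textbf{Case $\ell=k$.} We set $\bo{R}_{\cK\sigma}=\Pi^{k-1}_\sigma\bos{\mu}_\sigma$ and $\bo{R}^\bub_{K\sigma}=\bos{\mu}_\sigma-\Pi^{k-1}_\sigma\bos{\mu}_\sigma$. On the $-$ side, $\Upsilon^{k+1}_{L\sigma}\uvec{R}_h=0$ because all unknowns on $L$'s side of $\sigma$ vanish. By \eqref{eq:jump.n},
\[
\Jsig{\uS{R}}=\Upsilon^{k+1}_{K\sigma}\uvec{R}_h-\Pi^{\ell}_\sigma\Upsilon^{k+1}_{K\sigma}\uvec{R}_h+\bos{\mu}_\sigma .
\]
Projecting onto $\Poly{\ell}(\sigma)^3$ gives exactly $\bos{\mu}_\sigma$, which is \eqref{eq:prop.Rh.jump}.

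\textbf{Case $\ell=k-1$.} There is no bubble, $\Jsig{\uS{R}}=\Upsilon^{k+1}_{K\sigma}\uvec{R}_h$, and by \eqref{eq:Pi.k.Ups} we get $\Pi^{k-1}_\sigma\Jsig{\uS{R}}=\bo{R}_{\cK\sigma}$. So the choice $\bo{R}_{\cK\sigma}=\bos{\mu}_\sigma$ gives \eqref{eq:prop.Rh.jump}. For $k=0$ this case does not arise, since then $\ell=k=0$ and the bubble handles it.

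\textbf{Bounds \eqref{eq:prop.Rh.norm.energy} and \eqref{eq:prop.Rh.norm.gamma}.} These follow by scaling on each $K$. By the boundedness of the DDR reconstructions with respect to the local discrete norm (the argument used for \cite[Lemma~3]{ddr-variant} and in \cite{DKMR25}), $\|\Upsilon^{k+1}_{K\sigma}\uvec{R}_h\|_{L^2(\sigma)}\lesssim\|\bo{R}_{\cK\sigma}\|_{L^2(\sigma)}\le\|\bos{\mu}_\sigma\|_{L^2(\sigma)}$. Discrete trace and inverse inequalities then give
\[
\|\GS{K}\uS{R}\|_{L^2(K)}\lesssim h_K^{-1/2}\sum_\sigma\|\UpS{K\sigma}\uS{R}\|_{L^2(\sigma)} .
\]
Each term of $\mathbb S_K$ scales the same way. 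Summing over $K$, and using $h_K\gtrsim h_\sigma\gtrsim h_\Gamma$ from mesh regularity and \eqref{eq:quasi.uniform.fracture}, yields \eqref{eq:prop.Rh.norm.energy}. Cells that do not touch $\Gamma$ carry no nonzero unknowns and contribute nothing. The $L^2$ bound on the jump \eqref{eq:prop.Rh.norm.gamma} follows directly from the face bounds.

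\textbf{Main obstacle: \eqref{eq:prop.Rh.ups.bound}.} The right-hand side involves only edge jumps of $\bos{\mu}_h$, so the naive construction fails: $\Upsilon^{k+1}_{K\sigma}\uvec{R}_h-\Pi^\ell_\sigma\Upsilon^{k+1}_{K\sigma}\uvec{R}_h$ is generally nonzero even when $\bos{\mu}_h$ is globally a polynomial of degree $\ell$. The plan is to modify the construction so that the edge and vertex unknowns on the fracture, on the $+$ side, carry traces of a continuous approximation of $\bos{\mu}_h$.

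\emph{Step 1.} Define $\tilde{\bos{\mu}}_h$ as a conforming (Oswald-type) averaging of $\bos{\mu}_h$ on $\Gamma$ with values in $\Poly{\ell}$. Average at vertices, and set edge moments to $\Pi^{k-1}_e$ of averages along edges. Standard averaging estimates give
\[
\sum_\sigma\|\bos{\mu}_\sigma-\tilde{\bos{\mu}}_{h|\sigma}\|^2_{L^2(\sigma)}\lesssim\sum_e h_e\|\jump{\bos{\mu}_h}_e\|^2_{L^2(e)} .
\]
We must handle vertices and edges lying on the fracture tips, on $\partial\Omega$, or at intersections where the $+$ side is not well defined. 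There we set the values to zero, which is also controlled by the jump terms if one includes boundary edges, or by a local redefinition.

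\emph{Step 2.} Take $(\bo{R}_{\cK s},\bo{R}_{\cK e})$ to be the interpolates of $\tilde{\bos{\mu}}_h$ on the $+$ side, and face unknowns as before. Polynomial consistency \eqref{eq:UpS.face_1a} then shows that $\Upsilon^{k+1}_{K\sigma}\uvec{R}_h$ equals a polynomial of degree $\le\ell$ up to a term bounded by $\|\bos{\mu}_\sigma-\tilde{\bos{\mu}}_{h|\sigma}\|_{L^2(\sigma)}$, using the local boundedness of $\Upsilon^{k+1}_{K\sigma}$ applied to the difference. This gives \eqref{eq:prop.Rh.ups.bound}.

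\emph{Step 3.} The new vertex and edge unknowns do not spoil the other bounds: their $L^2$ scaling is again controlled by $\|\bos{\mu}_h\|_{L^2(\Gamma)}$. They do, however, now enter cells on the $+$ side sharing those edges or vertices, so the scaling argument must be run over the whole patch, still with the factor $h_\Gamma^{-1/2}$.
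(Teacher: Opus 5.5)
Your final construction (after Steps 1--3) is the paper's \eqref{eq:Rh.construction}: zero cell unknowns and zero negative-side unknowns, face unknown $\Pi^{k-1}_\sigma\bos{\mu}_\sigma$ plus the bubble remainder, and vertex/edge unknowns equal to averages of $\bos{\mu}_h$ over the fracture faces sharing them. Your proof of \eqref{eq:prop.Rh.ups.bound} is also the paper's (subtract the interpolate of $\bos{\mu}_\sigma$ via polynomial consistency, bound the difference with \eqref{eq:facePotdof.bound}, control its edge/vertex components by edge jumps), and so are the other three properties (from $\bo{R}_K=\bo{0}$, discrete trace inequalities and \eqref{eq:quasi.uniform.fracture}).

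The difficulty you flag at $\partial\Gamma$ is real for this construction: unknowns on $\partial\Omega$ are forced to vanish by the Dirichlet condition, and at tips the $\pm$ classes merge. Setting those values to zero, as you propose, is not controlled by the interior-edge jumps on the right-hand side of \eqref{eq:prop.Rh.ups.bound}. The paper's proof does not resolve this either: \eqref{Re.tilde}--\eqref{eq:Rs.tilde} tacitly use the averaged value at every vertex and edge of every fracture face.
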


\begin{proof}
In this proof, all the hidden constants in $\lesssim$ have the same dependency as in the statement of the lemma.

\textit{\uline{Decompose $\bos{\mu}_{h}$}:}
Since $\bos{\mu}_{h} \in \Ms$, we have $\bos{\mu}_{\sigma}=(\bos{\mu}_{h})_{|_\sigma} \in \Poly{\ell}(\sigma)^3$ for all faces $\sigma \in \cF_{\Gamma}.$ Then we can write $(\bos{\mu}_{h})_{|_\sigma} = \Pi^{k-1}_{\sigma}\bos{\mu}_{\sigma} + \Pi_{\sigma}^{\ell}(\bos{\mu}_{\sigma}-\Pi^{k-1}_{\sigma}\bos{\mu}_{\sigma}).$

\textit{\underline{Construction of $\RhS$}:} We first introduce the sets $\faces_{s}:= \left\{ \sigma^{\prime} \in \faces_{\Gamma}; s\in \nodes_{\sigma^{\prime}} \right\},$ $\faces_{e}:= \left\{ \sigma^{\prime} \in \faces_{\Gamma}; e\in \edges_{\sigma^{\prime}} \right\},$ and $\cells_{\Gamma}^{+}:=\{K\in \cells; \overline{K}\cap \Gamma \neq \emptyset,\;\faces_{\Gamma,K}^{+}\neq \emptyset\}.$ In words, $\cells^{+}_{\Gamma}$ is the set of mesh cells having at least one fracture face whose outward normal points to the positive side of $\Gamma$ (i.e.\ $\normal_{K\sigma}\cdot \normal^{+}>0$) --- the layer of cells bordering the fracture from its positive side.
The vector $\RhS\in\ddgskS$ is constructed by setting all components to zero except those attached to the positive side of $\Gamma$, which are defined as
\begin{equation}\label{eq:Rh.construction}
\begin{cases}
\bo{R}_K=\bo{0}& \forall K\in\cM_{\Gamma}^{+},\\[2pt]
\bo{R}_{\Ks} = \frac{1}{|\faces_{s}|} \sum\limits_{\sigma^{\prime} \in \faces_{s}} \bos{\mu}_{\sigma^{\prime}}(x_{s})& \forall\, s\in\cV_K^{\text{int}},\; K\in\cM_{\Gamma}^{+},\\[4pt]
\bo{R}_{\Ke} = \frac{1}{|\faces_{e}|}\Pi_{e}^{k-1}\sum\limits_{\sigma^{\prime}\in \faces_{e}}(\bos{\mu}_{\sigma^{\prime}})_{|_e}& \forall\, e\in\cE_K^{\text{int}},\; K\in\cM_{\Gamma}^{+},\\[4pt]
\bo{R}_{\cK\sigma} =\Pi_{\sigma}^{k-1}\bos{\mu}_{\sigma},\quad
\bo{R}^\bub_{\Ksig} = \krond \Pi_{\sigma}^{\ell}(\bos{\mu}_{\sigma}-\Pi_{\sigma}^{k-1}\bos{\mu}_{\sigma})
& \forall\,\sigma\in\cF^+_{\Gamma,K},\; K\in\cM_{\Gamma}^{+}.
\end{cases}
\end{equation}
\textit{\underline{Proof of \eqref{eq:prop.Rh.jump}}:}
Let $\sigma\in \faces_\Gamma$ and first assume that $\krond=1.$ From \eqref{eq:Rh.construction} and the property $\Pi_{\sigma}^{\ell}\circ\Pi_{\sigma}^{k-1}=\Pi_{\sigma}^{k-1}$, we observe that 
\begin{equation}\label{eq:sum_bub.Rh}
\bo{R}_{\Ksig}+\bo{R}_{\Ksig}^{\bub}=\Pi_{\sigma}^{k-1}\bos{\mu}_{\sigma}+\Pi_{\sigma}^{\ell}(\bos{\mu}_{\sigma}-\Pi_{\sigma}^{k-1}\bos{\mu}_{\sigma})= \bos{\mu}_{\sigma}.
\end{equation}
Denoting by $L$ the cell on the other side of $\sigma$ from $K$, we have $\bo{R}_{\Ls}=\bo{0},$ $\bo{R}_{\Le}=\bo{0},$ and $\bo{R}_{\Lsig}=0$, which implies that $\UpS{\Lsig}\uS{R}=\Upsilon^{k+1}_{\Lsig}\uvec{R}_{h}=\bo{0}$ from \eqref{eq:Up.face}-\eqref{eq:UpS.face}. Together with the definition \eqref{eq:jump.n} of discrete jump on $\sigma$
and \eqref{eq:sum_bub.Rh}, this yields
\begin{equation}\label{eq:jump.Rhn}
\Jsig{\uS{R}}=\Upsilon^{k+1}_{\Ksig}\uvec{R}_h + \bos{\mu}_{\sigma} -\Pi^\ell_\sigma\Upsilon^{k+1}_{K\sigma}\uvec{R}_h.    
\end{equation}
Now apply $\Pi^\ell_\sigma$ to both sides of the above equation to get
\begin{equation}\label{eq:krond.1}
\Pi_{\sigma}^{\ell}\Jsig{\uS{R}}= \bos{\mu}_{\sigma}.
\end{equation}
Next, let us consider $\krond=0,$ i.e., $\ell=k-1$ for $k\geq 1$ and $\ell=0$ for $k=0.$ Then from \eqref{eq:UpS.face_1}, the definition \eqref{eq:jump.n} of discrete jump, and using again $\UpS{\Lsig}\uS{R}=\Upsilon^{k+1}_{\Lsig}\uvec{R}_{h}=\bo{0}$, we get
\begin{equation*}
\Jsig{\uS{R}}=\Upsilon^{k+1}_{\Ksig}\uvec{R}_h.    
\end{equation*}
Now again apply $\Pi_{\sigma}^{\ell}$ to the above equation and  to get
\begin{equation}\label{eq:krond.0}
\Pi_{\sigma}^{k-1}\Jsig{\uS{R}}=\Pi_{\sigma}^{k-1}\Upsilon^{k+1}_{\Ksig}\uvec{R}_h\overset{\eqref{eq:Pi.k.Ups}}=\bos{\mu}_{\sigma}.    
\end{equation}
Finally, \eqref{eq:krond.1} and \eqref{eq:krond.0} completes the proof that $\Pi_{\cF_{\Gamma}}^{\ell}\djumph{\uS{R}}=\bos{\mu}_{h}$. \qed

\textit{\uline{Proof of \eqref{eq:prop.Rh.norm.energy}}:} 
The definition \eqref{eq:G1K_enhance} of $\GS{K}$ and the construction of $\RhS$ from \eqref{eq:Rh.construction} leads to, for all $\bo{\xi}\in\Poly{k}(K)^{3\times 3}$, 
\begin{equation*}
\int_K\GS{K}\RhS:\bos{\xi} =\sum_{\sigma\in\cF_K}\int_\sigma \UpS{\Ksig}\RhS\cdot\bos{\xi}\,\normal_{\Ksig}.
\end{equation*}
Substitute $\bos{\xi}=\GS{K}\RhS$  in the above equation and then apply the Cauchy--Schwarz's inequality to obtain
\begin{align*}
\NORM{L^2(K)}{\GS{K}\RhS}^2
&\leq \sum_{\sigma\in\cF_K}\NORM{L^2(\sigma)}{\UpS{\Ksig}\uS{R}}\NORM{L^2(\sigma)}{\GS{K}\RhS}\\
&\leq h_K^{-\frac{1}{2}}\NORM{L^2(K)}{\GS{K}\RhS}\sum_{\sigma\in\cF_K}\NORM{L^2(\sigma)}{\UpS{\Ksig}\uS{R}},
\end{align*}
where the conclusion follows from the discrete trace inequality \cite[Lemma~1.32]{hho-book}.
Simplifying by $\NORM{L^2(K)}{\GS{K}\RhS}$ and invoking \eqref{eq:UpS.face} leads to
\begin{equation}
\NORM{L^2(K)}{\GS{K}\RhS} \lesssim h_K^{-\frac{1}{2}}\left(\sum_{\sigma\in\cF_K}\NORM{L^2(\sigma)}{\Upsilon_{\Ksig}^{k+1}\uvec{R}_{h}}+\krond\sum_{\sigma\in \mathcal F_{\Gamma,K}^+} \NORM{L^{2}(\sigma)}{\bo{R}_{\cK\sigma} + \bo{R}^{\bub}_{\Ksig} -\Pi^\ell_{\sigma}\Upsilon^{k+1}_{\Ksig}\uvec{R}_h}\right).
\label{eq:GK.UPK}
\end{equation}
Now, the estimates from~\cite[Lemma 8]{Di-Pietro.Droniou.ea:24} give, for all $\uvec{v}_{h}\in \dgsk$,
\begin{equation}
\lVert \Up_{\Ksig}^{k+1}\uvec{v}_h\rVert_{L^2(\sigma)} \lesssim \NORM{L^2(\sigma)}{\bo{v}_{\cK\sigma}} + h_\sigma^{\frac{1}{2}}\sum_{e\in\cE_\sigma}\NORM{L^2(e)}{\bo{v}_\Ke} + h_\sigma \sum_{s\in\mathcal V_\sigma}\lvert \bo{v}_\Ks\rvert. \label{eq:facePotdof.bound}
\end{equation}
Discrete trace inequalities (from edges to faces, and from vertex to edges to faces) and the mesh regularity assumption ($h_{e}\approx h_{\sigma}$) give, for any edge $e$ or vertex $s$ on $\Gamma$,
\begin{equation}\label{eq:edge.face.tr.ineq}
\begin{alignedat}{2}
  \NORM{L^2(e)}{(\bos{\mu}_{\sigma^{\prime}})_{|_e}} &\lesssim h_{e}^{-\frac{1}{2}} \NORM{L^2(\sigma^{\prime})}{\bos{\mu}_{\sigma^{\prime}}} &&\quad\forall \sigma^\prime \in\faces_{e},\\
  |\bos{\mu}_{\sigma^{\prime}}(x_{s})|
   \lesssim  h_{e}^{-\frac{1}{2}} 
   \NORM{L^2(e)}{(\bos{\mu}_{\sigma^{\prime}})_{|_e}} &\lesssim h_{e}^{-1} \NORM{L^2(\sigma^{\prime})}{\bos{\mu}_{\sigma^{\prime}}}&&\quad\forall \sigma\in\faces_s.
\end{alignedat}
\end{equation}
Make $\uvec{v}_h=\uvec{R}_h$ in \eqref{eq:facePotdof.bound}, recall the definition \eqref{eq:Rh.construction} and apply~\eqref{eq:edge.face.tr.ineq} to get
\begin{equation}\label{eq:up.Rh.dof.bound}
\lVert \Up_{\Ksig}^{k+1}\uvec{R}_h\rVert_{L^2(\sigma)} \lesssim \sum_{\sigma'\in\faces_{\sigma}}\NORM{L^{2}(\sigma')}{\bos{\mu}_{{\sigma'}}},
\end{equation}
where $\faces_\sigma$ is the set of all $\sigma'\in\faces_\Gamma$ connected to $\sigma$ by an edge or a vertex. Using \eqref{eq:up.Rh.dof.bound}, the construction of $\uS{R}$, \eqref{eq:sum_bub.Rh}, and the definition of $\Pi_{\sigma}^{\ell}$ in right-hand side of \eqref{eq:GK.UPK} leads to 
\begin{align}
\NORM{L^2(K)}{\GS{K}\RhS}^2 
&\lesssim h_{K}^{-1} \sum_{\sigma\in\faces_{\Gamma,K}^+}\sum_{\sigma'\in\faces_{\sigma}}\NORM{L^2({\sigma'})}{\bos{\mu}_{{\sigma'}}}^{2}\nonumber\\
\overset{\eqref{eq:quasi.uniform.fracture}}&\lesssim 
h_{\Gamma}^{-1}\sum_{\sigma\in\faces_{\Gamma,K}^+}{\sum_{\sigma'\in\faces_{\sigma}}}\NORM{L^2(\sigma')}{\bos{\mu}_{\sigma'}}^{2}.
\label{eq:GK.Rhs.corr_1}
\end{align}
Now, the summation of all $K\in \cells$ in the above inequality yields, since each $\sigma'\in\faces_\Gamma$ appears in $\lesssim 1$ sets $\faces_\sigma$,
\begin{equation}\label{Gh.RhS.new}
\sum_{K\in \cells} \NORM{L^2(K)}{\GS{K}\RhS}^2 \lesssim h_{\Gamma}^{-1} \NORM{L^{2}(\Gamma)}{\bos{\mu}_{h}}^2.
\end{equation}
Let us turn to bounding $\stabin{\uS{R}}{\uS{R}}$. Using similar arguments as in \cite[Lemma 8]{Di-Pietro.Droniou.ea:24}, we have, for all $\uS{v}\in \dgskS$,
\begin{align}
  \lVert \UpS{K}\uS{v}\rVert_{L^2(K)} \lesssim{}&\NORM{L^2(K)}{\bo{v}_K} + h_K^{\frac{1}{2}}\sum_{\sigma\in\cF_K} \NORM{L^2(\sigma)}{\bo{v}_{\cK\sigma}} +h_K^{\frac{1}{2}}\krond \sum_{\sigma\in\cF_{\Gamma,K}^{+}} \NORM{L^2(\sigma)}{\bo{v}_{\Ksig}^{\bub}}\notag\\
  &  + h_{K} \sum_{e\in\cE_K} \NORM{L^2(e)}{\bo{v}_\Ke} + h_K^{\frac{3}{2}}\sum_{s\in\mathcal V_K}\lvert \bo{v}_\Ks\rvert,\label{eq:cellPotdof.bound}\\
\lVert \Up_{Ke}^{k+1}\uvec{v}_h\rVert_{L^2(e)}\lesssim{}& \NORM{L^2(e)}{\bo{v}_\Ke} + h^{\frac{1}{2}}_e\sum_{s\in\mathcal V_e}\lvert \bo{v}_\Ks\rvert. \label{eq:edgPotdof.bound}
\end{align}
Use the definition \eqref{eq:stabK} of $\mathbb{S}_{K}$ along with the construction of $\RhS$ and the triangle inequality, we get
\begin{align}
\mathbb S_{K}(\uS{R},\uS{R})\leq {}&h^{-1}_K\sum_{\sigma\in\cF_K}
\NORM{L^{2}(\sigma)}{\UpS{K}\uS{R}-{\Upsilon}^{k+1}_{\Ksig}\uvec{R}_{h}}^{2}\nonumber\\
& +\sum_{e\in\cE_K}\NORM{L^{2}(e)}{\UpS{K}\uS{R}}^{2} +\sum_{e\in\cE_K}\NORM{L^{2}(e)}{\Upsilon_{Ke}^{k+1}\uvec{R}_{h}}^{2}
+h_K\sum_{s\in\mathcal V_K}|\UpS{K}\uS{R}(\bx_s)|^{2}\nonumber\\
& \quad + h^{-1}_{K}\krond\sum_{\sigma\in \mathcal F_{\Gamma,K}^+}\NORM{L^{2}(\sigma)}{\bo{R}_{\cK\sigma} + \bo{R}^{\bub}_{\Ksig} 
-\Pi^\ell_{\sigma}\Upsilon^{k+1}_{\Ksig}\uvec{R}_h}^{2}.\label{eq:stab.Sh}
\end{align}
Note that the terms $\bo{R}_{\Ks}$ that should appear have been absorbed, using discrete trace inequalities, into the terms $\NORM{L^{2}(e)}{\Upsilon_{Ke}^{k+1}\uvec{R}_{h}}$, owing to $\Upsilon_{Ke}^{k+1}\uvec{R}_{h}(\bo{x}_s)=\bo{R}_{\Ks}$ (\eqref{eq:def.UpS.Ke}). 
Applying the discrete trace inequality~\cite[Lemma~1.32]{hho-book} once more (from $\sigma$ down to each edge $e\in \cE_\sigma$ and vertex $s\in \mathcal{V}_{\sigma}$) and using the mesh regularity assumption, we have
$$
\begin{aligned}
\NORM{L^{2}(e)}{\UpS{K}\uS{R}}^{2} &\lesssim h_K^{-1}\NORM{L^{2}(\sigma)}{\UpS{K}\uS{R}}^{2}, \\
h_{K}|\UpS{K}\uS{R}(\bx_s)|^{2} &\lesssim h_{K}^{-1}\NORM{L^{2}(\sigma)}{\UpS{K}\uS{R}}^{2}.
\end{aligned}
$$
Using the triangle inequality on the first term of \eqref{eq:stab.Sh} we therefore infer
\begin{align}
\mathbb S_{K}(\uS{R},\uS{R})\lesssim {}&h^{-1}_K\sum_{\sigma\in\cF_K} \NORM{L^{2}(\sigma)}{\UpS{K}\uS{R}}^2
+h^{-1}_K\sum_{\sigma\in\cF_K}\NORM{L^{2}(\sigma)}{\Upsilon^{k+1}_{\Ksig}\uvec{R}_{h}}^{2}\nonumber\\
& +\sum_{e\in\cE_K}\NORM{L^{2}(e)}{\Upsilon_{Ke}^{k+1}\uvec{R}_{h}}^{2}
 + h^{-1}_{K}\krond\sum_{\sigma\in \mathcal F_{\Gamma,K}^+}\NORM{L^{2}(\sigma)}{\bo{R}_{\cK\sigma} + \bo{R}^{\bub}_{\Ksig} 
-\Pi^\ell_{\sigma}\Upsilon^{k+1}_{\Ksig}\uvec{R}_h}^{2}.\label{eq:stab.Sh.bis}
\end{align}
We now bound $\NORM{L^{2}(\sigma)}{\UpS{K}\uS{R}}^{2}.$
An application of the discrete trace inequality leads to 
\begin{align*}
h_K\NORM{L^{2}(\sigma)}{\UpS{K}\uS{R}}^2&\lesssim \NORM{L^{2}(K)}{\UpS{K}\uS{R}}^2 \nonumber\\ \overset{\eqref{eq:cellPotdof.bound},\eqref{eq:Rh.construction}}&\lesssim h_{K}\sum_{\sigma\in \faces_{K}} \NORM{L^2(\sigma)}{\Pi_{\sigma}^{k-1}\bos{\mu}_{\sigma}}^2+h_{K}\krond \sum_{\sigma\in\cF_{\Gamma,K}^{+}} \NORM{L^2(\sigma)}{\Pi_{\sigma}^{\ell}(\bos{\mu}_{\sigma}-\Pi_{\sigma}^{k-1}\bos{\mu}_{\sigma})}^2\nonumber\\&
\qquad\qquad +h_{K}^2 \sum_{e \in \cE_{K}^{\text{int}}} \NORM{L^{2}(e)}{\bo{R}_{\Ke}}^2 + h_{K}^3 \sum_{s \in \mathcal{V}_{K}^{\text{int}}} |\bo{R}_{\Ks}|^2.
\end{align*}
In a similar way as in~\eqref{eq:up.Rh.dof.bound}, using~\eqref{eq:edge.face.tr.ineq}, the $L^2$-boundedness of $\Pi_{e}^{k-1},$ and the mesh regularity,
$$
\NORM{L^{2}(e)}{\bo{R}_{\Ke}}^2 \lesssim h_{e}^{-1} \sum_{\sigma^{\prime} \in \cF_{e}} \NORM{L^{2}(\sigma^{\prime})}{\mu_{\sigma^{\prime}}}^2,
\qquad |\bo{R}_{\Ks}|^2 \lesssim h_{e}^{-2}\sum_{\sigma^{\prime} \in \cF_{s}}\NORM{L^{2}(\sigma^{\prime})}{\mu_{\sigma^{\prime}}}^2.
$$
Combined with the $L^2$-boundedness of $\Pi_{\sigma}^{k-1},$ $\Pi_{\sigma}^{\ell}$, $h_{K}\approx h_{e}$ and grouping all index set via $\cF_{\sigma},$ this gives
\begin{equation}\label{eq:cell.Pot.Rh}
   \NORM{L^{2}(\sigma)}{\UpS{K}\uS{R}}\lesssim 
   \left(\sum_{\sigma\in\faces_{\Gamma,K}^+}{\sum_{\sigma'\in\faces_{\sigma}}}\NORM{L^2(\sigma')}{\bos{\mu}_{\sigma'}}^{2}\right)^{\frac{1}{2}}.
\end{equation}
The last term of~\eqref{eq:stab.Sh.bis} requires no separate argument: splitting it via the triangle inequality using the DOFs $\bo{R}_{\cK\sigma}$ and $\bo{R}^{\bub}_{\Ksig}$ of~\eqref{eq:Rh.construction}, the first two terms recombine exactly, by~\eqref{eq:sum_bub.Rh} and the $L^2(\sigma)$-orthogonality of $\Poly{k-1}(\sigma)$ and $\Poly{\ell \perp k-1}(\sigma)$, into  
$\NORM{L^{2}(\sigma)}{\bo{R}_{\cK\sigma} + \bo{R}^{\bub}_{\Ksig} 
}^{2}=\NORM{L^{2}(\sigma)}{\bo{R}_{\cK\sigma}}^2 + \NORM{L^{2}(\sigma)}{\bo{R}^{\bub}_{\Ksig} 
}^{2}= \NORM{L^{2}(\sigma)}{\mu_{\sigma}}^2 $, while the $L^2$-boundedness of $\Pi_{\sigma}^{\ell}$ bound the remaining term by $\Upsilon^{k+1}_{\Ksig}\uvec{R}_h,$ which is the same term controlled by \eqref{eq:up.Rh.dof.bound}. Hence, we have
\begin{equation}\label{eq:final.term.stab}
h^{-1}_{K}\sum_{\sigma\in \mathcal F_{\Gamma,K}^+}\NORM{L^{2}(\sigma)}{\bo{R}_{\cK\sigma} + \bo{R}^{\bub}_{\Ksig} 
-\Pi^\ell_{\sigma}\Upsilon^{k+1}_{\Ksig}\uvec{R}_h}^{2}
\lesssim h_{K}^{-1}\sum_{\sigma\in \mathcal F_{\Gamma,K}^+}\sum_{\sigma^{\prime}\in \cF_{\sigma}} \NORM{L^{2}(\sigma^{\prime})}{\bos{\mu}_{\sigma^{\prime}}}^{2}.
\end{equation}
Applying~ \eqref{eq:cell.Pot.Rh}, \eqref{eq:up.Rh.dof.bound}, \eqref{eq:edgPotdof.bound}, and \eqref{eq:final.term.stab} into \eqref{eq:stab.Sh.bis} yields, by mesh regularity assumption,
 \begin{equation}\label{eq:stab.Sh.2}
  \stabin{\RhS}{\RhS}^{\frac{1}{2}} \lesssim \left(\sum_{\sigma\in \faces_{\Gamma}}h_{\sigma}^{-1}\NORM{L^{2}(\sigma)}{\bos{\mu}_{\sigma}}^2\right)^{\frac{1}{2}}
\overset{\eqref{eq:quasi.uniform.fracture}}\lesssim h_{\Gamma}^{-\frac{1}{2}}\NORM{L^{2}(\Gamma)}{\bos{\mu}_{h}} .
 \end{equation}
Finally, the definition of $\NORM{1,h}{\cdot}$ along with \eqref{Gh.RhS.new} and \eqref{eq:stab.Sh.2} completes the proof of \eqref{eq:prop.Rh.norm.energy}.  \qed

\textit{\uline{Proof of \eqref{eq:prop.Rh.norm.gamma}}:} 
This estimate follows from \eqref{eq:jump.Rhn} and \eqref{eq:up.Rh.dof.bound} and summing over $\sigma\in\faces_\Gamma.$ \qed

\textit{\uline{Proof of \eqref{eq:prop.Rh.ups.bound}}:} 
Recalling that $\bos{\mu}_{\sigma}\in \Poly{\ell}(\sigma)$, the polynomial consistency property \cite[Eq.~(4.7)]{DKMR25} implies $\Upsilon^{k+1}_{\Ksig}\underline{\mathbb I}_h \bos{\mu}_{\sigma}=\bos{\mu}_{\sigma}$. From this and the definition of $\Pi_{\sigma}^{\ell},$ we have 
$$
 \NORM{L^{2}(\sigma)}{\Upsilon_{\Ksig}^{k+1}\uvec{R}_{h}-\Pi_{\sigma}^{\ell}\Upsilon_{\Ksig}^{k+1}\uvec{R}_{h}}
 =\NORM{L^{2}(\sigma)}{\Upsilon_{\Ksig}^{k+1}\uvec{\widetilde{R}}_{\sigma}-\Pi_{\sigma}^{\ell}\Upsilon_{\Ksig}^{k+1}\uvec{\widetilde{R}}_{\sigma}}   
$$
where $\uvec{\widetilde{R}}_{\sigma}$ is the local vector, on $\sigma$ and its boundary, whose components are those of $\uvec{R}_{h}-\underline{\mathbb I}_h \bos{\mu}_\sigma$. Applying the triangle inequality and again invoking the definition of 
$\Pi_{\sigma}^{\ell}$ in the above equation, we obtain
\begin{align}
 \sum_{\sigma \in\faces_\Gamma} \NORM{L^{2}(\sigma)}{\Upsilon_{\Ksig}^{k+1}\uvec{R}_{h}-\Pi_{\sigma}^{\ell}\Upsilon_{\Ksig}^{k+1}\uvec{R}_{h}}^2 &\lesssim \sum_{\sigma \in\faces_\Gamma} \NORM{L^{2}(\sigma)}{\Upsilon_{\Ksig}^{k+1}\uvec{\widetilde{R}}_{{\sigma}{}}}^2 \notag\\
 \overset{\eqref{eq:facePotdof.bound}}& \lesssim  \sum_{\sigma \in\faces_\Gamma}\left(h_\sigma\sum_{e\in\cE_\sigma}\NORM{L^2(e)}{\widetilde{\bo{R}}_{{\sigma},\Ke}}^2 + h_\sigma^2 \sum_{s\in\mathcal V_\sigma}\lvert \widetilde{\bo{R}}_{\sigma,\Ks}\rvert^2\right),\label{eq:rhs.Rh.tilde}
\end{align}
where we have used, in the conclusion, the fact that $\widetilde{\bo{R}}_{\sigma,\Ksig}=\bo{R}_{\Ksig}-\Pi_{\sigma}^{k-1}\bo{\mu}_\sigma=\bo{0}$ by \eqref{eq:Rh.construction}. We will now estimate both terms on the right-hand side of \eqref{eq:rhs.Rh.tilde}. From the definition of $\widetilde{\bo{R}}_{\Ke}$ and $h_{\sigma}\approx h_{e}$, we have 
\begin{align}\label{Re.tilde}
\sum_{\sigma \in\faces_\Gamma}h_{\sigma}\sum_{e\in\cE_\sigma}\NORM{L^{2}(e)}{\widetilde{\bo{R}}_{\sigma,\Ke}}^2&\lesssim \sum_{\sigma \in\faces_\Gamma} \sum_{e\in\cE_\sigma}h_{e}\NORM{L^{2}(e)}{\bo{R}_{\Ke}- \Pi_{e}^{k-1}(\bos{\mu}_{\sigma})_{|_e}}^2 \notag\\
 \overset{\eqref{eq:Rh.construction}} &\lesssim \sum_{\sigma \in\faces_\Gamma}\sum_{e\in\cE_\sigma}h_{e}\NORM{L^{2}(e)}{\Big(\frac{1}{|\faces_{e}|}\Pi_{e}^{k-1}\sum_{\sigma^{\prime}\in \faces_{e}}(\bos{\mu}_{\sigma^{\prime}})_{|_e}\Big)-\Pi_{e}^{k-1}(\bos{\mu}_{\sigma})_{|_e}}^2 \nonumber\\& \lesssim \sum_{\sigma \in\faces_\Gamma}\sum_{e\in\cE_\sigma}h_{e} \NORM{L^{2}(e)}{\jump{\bos{\mu}_{\sigma}}_{e}}^2 \lesssim \sum_{e \in\edges_\Gamma^{\text{int}}} h_{e} \NORM{L^{2}(e)}{\jump{\bos{\mu}_{\sigma}}_{e}}^2,
\end{align}
where we have written
$$
\Big(\frac{1}{|\faces_{e}|}\Pi_{e}^{k-1}\sum_{\sigma^{\prime}\in \faces_{e}}(\bos{\mu}_{\sigma^{\prime}})_{|_e}\Big)-\Pi_{e}^{k-1}(\bos{\mu}_{\sigma})_{|_e}=\frac{1}{|\faces_{e}|}\Pi_{e}^{k-1}\sum_{\sigma^{\prime}\in \faces_{e}}\left[(\bos{\mu}_{\sigma^{\prime}})_{|_e}-(\bos{\mu}_{\sigma})_{|_e}\right]
$$
to pass to the third step, and used the $L^2$-boundedness of $\Pi_{e}^{k-1}$ in the fourth step. An application of the discrete trace inequality \cite[Lemma~1.32]{hho-book} and \cite[Eq.~(7.63)]{hho-book} lead to
\begin{align}
\sum_{\sigma \in\faces_\Gamma} h_\sigma^2 \sum\limits_{s\in\mathcal V_\sigma}\lvert \widetilde{\bo{R}}_\Ks\rvert^2 &\lesssim \sum_{\sigma \in\faces_\Gamma} \NORM{L^{2}(\sigma)}{\Big(\frac{1}{|\faces_{s}|} \sum_{\sigma^{\prime} \in \faces_{s}} \bos{\mu}_{\sigma^{\prime}}\Big)-\bos{\mu}_{\sigma}}^2\notag \\ &\lesssim \sum_{\sigma \in\faces_\Gamma} \sum_{e\in \edges_{s,\sigma}}h_{e}\NORM{L^{2}(e)}{\jump{\bos{\mu}_{\sigma}}_{e}}^2\lesssim \sum_{e \in\edges_\Gamma^{\text{int}}} h_{e} \NORM{L^{2}(e)}{\jump{\bos{\mu}_{\sigma}}_{e}}^2 ,\label{eq:Rs.tilde}
\end{align}
where $\edges_{s,\sigma}:=\left\{e\in \edges_{\Gamma}; \overline{e}\cap\partial\sigma \neq \emptyset \right\}.$  Combining \eqref{eq:rhs.Rh.tilde}, \eqref{Re.tilde} and \eqref{eq:Rs.tilde} concludes the lemma.
\end{proof}

\begin{remark}[Comparison with the discrete lifting of \cite{Burman.Hansbo.Larson:17}]
\label{rem:comparison.lifting}
The lifting of \cite[Lemma~3.5]{Burman.Hansbo.Larson:17} is built through a Lagrange-nodal cut-off function~(global indicator function) $\xi_h$ vanishing at the boundary nodes of the contact zone and required to satisfy both \cite[Eq.~(3.6)]{Burman.Hansbo.Larson:17} and \cite[Assumption~3.1]{Burman.Hansbo.Larson:17}; the latter is established only via the existence, on each simplex, of a quadrature rule on $\P^{\ell+1}(K)$ with strictly positive weights and only interior points, a condition that must be verified degree by degree and, in practice, caps the analysis at $k\le 12$ in three dimensions. Our lifting $\uS{R}$ is built directly from the DDR DOFs structure with face enrichment, so neither a cut-off nor a quadrature-positivity condition is needed, and Lemma~\ref{rem:lifting} therefore holds for every $k\ge 0$ and $\ell\in\{\max(0,k-1),k\}$.
\end{remark}
\begin{lemma}[Monotonicity property]\label{lem:mono.prop}
For all $\bo{a}_1,\bo{a}_2,\boldsymbol{\xi}_1,\boldsymbol{\xi}_2$ functions $\Gamma\to\R^3$, it holds
$$
\NORM{L^{2}(\Gamma)}{\bPg{\bo{a}_1}{\boldsymbol{\xi}_1}-\bPg{\bo{a}_2}{\boldsymbol{\xi}_{2}}}^{2}
\leq \int_{\Gamma}\big[\bPg{\bo{a}_1}{\boldsymbol{\xi}_{1}}-\bPg{\bo{a}_2}{\boldsymbol{\xi}_{2}}\big]
\cdot\big[\big(\bo{a}_1-\bo{a}_2\big)+\gamma\big(\boldsymbol{\xi}_{1}-\boldsymbol{\xi}_{2}\big)\big].
$$
\end{lemma}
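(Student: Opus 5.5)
The inequality is pointwise on $\Gamma$, so I will prove that, for all $\bo{a}_1,\bo{a}_2,\boldsymbol{\xi}_1,\boldsymbol{\xi}_2\in\R^3$,
\begin{equation*}
\big|\bPg{\bo{a}_1}{\boldsymbol{\xi}_1}-\bPg{\bo{a}_2}{\boldsymbol{\xi}_2}\big|^2\le \big[\bPg{\bo{a}_1}{\boldsymbol{\xi}_1}-\bPg{\bo{a}_2}{\boldsymbol{\xi}_2}\big]\cdot\big[(\bo{a}_1-\bo{a}_2)+\gamma(\boldsymbol{\xi}_1-\boldsymbol{\xi}_2)\big],
\end{equation*}
and then integrate over $\Gamma$. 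Set $\bo{b}_i:=\bo{a}_i+\gamma\boldsymbol{\xi}_i$. The key observation is that $\bPg{\bo{a}_i}{\boldsymbol{\xi}_i}$ depends only on $\bo{b}_i$: it equals $\Phi(\bo{b}_i)$ with
\begin{equation*}
\Phi(\bo{b}) := [\bo{b}\cdot\normal^+]_+\,\normal^+ + \big[\bo{b}-(\bo{b}\cdot\normal^+)\normal^+\big]_{\gamma g}.
\end{equation*}
Indeed, the normal part of $\bo{a}_i+\gamma\boldsymbol{\xi}_i$ is $(\bo{a}_i)_\normal+\gamma(\xi_i)_\normal$ and its tangential part is $(\bo{a}_i)_\tang+\gamma(\boldsymbol{\xi}_i)_\tang$. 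The claim thus reduces to the firm nonexpansiveness $|\Phi(\bo{b}_1)-\Phi(\bo{b}_2)|^2\le (\Phi(\bo{b}_1)-\Phi(\bo{b}_2))\cdot(\bo{b}_1-\bo{b}_2)$.

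Next I show that $\Phi$ is the orthogonal projection of $\R^3$ onto the closed convex set
\begin{equation*}
C:=\{t\normal^++\bo{w}:\; t\ge0,\ \bo{w}\perp\normal^+,\ |\bo{w}|\le\gamma g\}.
\end{equation*}
Writing $\bo{b}=b_n\normal^++\bo{b}_\tang$ in an orthogonal frame, we have $|\bo{b}-(t\normal^++\bo{w})|^2=(b_n-t)^2+|\bo{b}_\tang-\bo{w}|^2$. The minimisation therefore decouples: $t=[b_n]_+$ is the projection onto $[0,\infty)$, and $\bo{w}=[\bo{b}_\tang]_{\gamma g}$ is the projection onto the disc of radius $\gamma g$ in the tangent plane. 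The latter coincides with the $\R^3$ ball projection, since $\bo{b}_\tang$ already lies in the plane. Here $g$ is only piecewise constant, but the argument is pointwise, so this causes no difficulty.

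Finally I invoke the standard projection property: for $p_i=P_C(\bo{b}_i)$, the variational characterisation gives $(\bo{b}_1-p_1)\cdot(p_2-p_1)\le0$ and $(\bo{b}_2-p_2)\cdot(p_1-p_2)\le0$. Summing these yields $|p_1-p_2|^2\le (p_1-p_2)\cdot(\bo{b}_1-\bo{b}_2)$, which is exactly the pointwise claim. Integrating over $\Gamma$ then concludes the proof.

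The only step needing care is the identification of $\Phi$ with $P_C$, specifically the decoupling into normal and tangential parts. Alternatively, one can apply firm nonexpansiveness separately to the scalar map $[\cdot]_+$ and to the ball projection, and sum the two inequalities using orthogonality of the normal and tangential components; this route avoids introducing $C$ altogether.
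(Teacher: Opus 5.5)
Your proof is correct and is essentially the paper's argument. The paper also splits the operator into its normal and tangential parts and adds the firm-nonexpansiveness inequalities for $[\cdot]_+$ and for the ball projection $[\cdot]_{\gamma g}$, citing Burman--Hansbo--Larson and Chouly; this is exactly the alternative you mention at the end. Your main route just combines the two projections into a single projection onto the half-cylinder $C$ and derives firm nonexpansiveness directly from the variational inequality, which makes the argument self-contained.
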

\begin{proof}
By the definition~\eqref{eq:combine.Pg.def} of $\mathbf{P}_{\gamma}$, for $i=1,2$,
\begin{equation*}
\bPg{\bo{a}_i}{\boldsymbol{\xi}_{i}}
=\Pgp{\bo{a}_{i,\normal}}{\boldsymbol{\xi}_{i,\normal}}\normal^{+}
+\Pt{\bo{a}_{i,\tang}}{\boldsymbol{\xi}_{i,\tang}},
\end{equation*}
the first summand being along $\normal^{+}$ and the second lying in the tangent plane. Both factors on the right-hand side of the claimed inequality share this same orthogonal splitting, so the inequality decouples into its normal and tangential parts. The normal part follows from the scalar monotonicity~\cite[Lemma~3.2]{Burman.Hansbo.Larson:17} applied with arguments $\bo{a}_{i,\normal}+\gamma \boldsymbol{\xi}_{i,\normal}$, and the tangential part from the projection monotonicity~\cite[Eq.~8]{Chouly2014} (with radius $\gamma g$) applied with arguments $\bo{a}_{i,\tang}+\gamma \boldsymbol{\xi}_{i,\tang}$. Summing these two bounds and recombining via the orthogonal decompositions $\bo{a}_i=\bo{a}_{i}\normal^{+}+\bo{a}_{i,\tang}$ and $\boldsymbol{\xi}_{i}=\boldsymbol{\xi}_{i,\normal}\normal^{+}+\boldsymbol{\xi}_{i,\tang}$ concludes the lemma.
\end{proof}

\begin{lemma}[Stability]\label{lem:stability_results}
There exists a constant $C$ independent of $h$ and $\gamma_0$ such that, if $\alpha >0$ satisfies $\alpha C\gamma _{0} < 1$ and $2{\alpha}C_{\Gamma}^2 < 1$, then for  all $(\uS{u}, \laf_{h}) \in \ddgskS \times \Ms$, it holds
\begin{align*} 
  \left(1-\alpha C \gamma_{0} \right)&\NORM{\mathrm{en},h}{\uS{u}}^{2}
     + \left(1-2{\alpha}C_{\Gamma}^2\right)
    \NORM{L^{2}(\Gamma)}{\gamma^{-\frac{1}{2}}_h\bPg{\djumph{\uS{u}}}{\laf_h}
       - \gamma^{\frac{1}{2}}_h \laf_h
     }^2 \notag\\ &\qquad +\frac{5\alpha}{8}\NORM{L^{2}(\Gamma)}{\gamma^{\frac{1}{2}}_{h}\laf_h}^2
   \leq \mathbb{A}_h((\uS{u},\laf_h);(\uS{u}-\alpha\RhS,\laf_h)),
\end{align*}
where $\uS{R}$ is defined from Lemma~\ref{rem:lifting} with $\bos{\mu}_h=-\gamma_{h}\laf_h$.
\end{lemma}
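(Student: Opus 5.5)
We expand $\mathbb{A}_h((\uS{u},\laf_h);(\uS{u}-\alpha\RhS,\laf_h))$ by bilinearity in the test pair. This gives the diagonal term $\mathbb{A}_h((\uS{u},\laf_h);(\uS{u},\laf_h))$ minus $\alpha\,\mathbb{A}_h((\uS{u},\laf_h);(\RhS,\bos{0}))$. We bound the first from below and the second from above. Throughout, write $\bo{P}:=\bPg{\djumph{\uS{u}}}{\laf_h}$ and $\bo{D}:=\gamma_h^{-\frac12}\bo{P}-\gamma_h^{\frac12}\laf_h$, and recall that $\gamma_h=\gamma_0h_\Gamma$ is a constant.

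\emph{Diagonal term.} The elastic part is $\int_\O\bbsigma_h(\uS{u}):\bbeps_h(\uS{u})+\stabc{\uS{u}}{\uS{u}}$. Since $\zeta(E_K,\nu_K)>\mu_1$, it is at least $\NORM{\mathrm{en},h}{\uS{u}}^2$. The stabilisation $\mathbb{S}_\Gamma(\laf_h,\laf_h)$ is nonnegative and is dropped. For the contact part, apply Lemma~\ref{lem:mono.prop} with $(\bo{a}_2,\bos{\xi}_2)=(\bo{0},\bo{0})$, noting that $\bPg{\bo 0}{\bo 0}=\bo 0$. This gives $\int_\Gamma|\bo{P}|^2\le\int_\Gamma \bo{P}\cdot(\djumph{\uS{u}}+\gamma_h\laf_h)$. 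Rewriting the contact integrand as
\[
\gamma_h^{-1}\bo{P}\cdot(\djumph{\uS{u}}-\gamma_h\laf_h)+\gamma_h|\laf_h|^2=\gamma_h^{-1}\bo{P}\cdot(\djumph{\uS{u}}+\gamma_h\laf_h)-2\bo{P}\cdot\laf_h+\gamma_h|\laf_h|^2,
\]
the contact part of $\mathbb{C}_h$ is therefore at least $\int_\Gamma(\gamma_h^{-1}|\bo{P}|^2-2\bo{P}\cdot\laf_h+\gamma_h|\laf_h|^2)=\NORM{L^2(\Gamma)}{\bo{D}}^2$.

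\emph{Lifting term.} Since the test multiplier is zero, $\mathbb{A}_h((\uS{u},\laf_h);(\RhS,\bos{0}))$ equals $\langle\uS{u},\RhS\rangle$ (bulk plus scaled stabilisation) plus $\int_\Gamma\gamma_h^{-1}\bo{P}\cdot\djumph{\RhS}$.

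For the bulk part, Cauchy--Schwarz, \eqref{eq:norm_energy} and \eqref{eq:prop.Rh.norm.energy} with $\bos{\mu}_h=-\gamma_h\laf_h$ give
\[
|\text{bulk}|\lesssim\NORM{\mathrm{en},h}{\uS{u}}\,h_\Gamma^{-\frac12}\gamma_h\NORM{L^2(\Gamma)}{\laf_h}=\gamma_0^{\frac12}\NORM{\mathrm{en},h}{\uS{u}}\NORM{L^2(\Gamma)}{\gamma_h^{\frac12}\laf_h}.
\]
Young's inequality then bounds this by $C\gamma_0\NORM{\mathrm{en},h}{\uS{u}}^2+\frac18\NORM{L^2(\Gamma)}{\gamma_h^{\frac12}\laf_h}^2$, which fixes $C$.

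For the contact part, split $\gamma_h^{-1}\bo{P}=\gamma_h^{-\frac12}\bo{D}+\laf_h$. Since $\laf_h\in\Ms$ is piecewise $\Poly{\ell}$, \eqref{eq:prop.Rh.jump} yields $\int_\Gamma\laf_h\cdot\djumph{\RhS}=\int_\Gamma\laf_h\cdot\Pi^\ell_{\cF_\Gamma}\djumph{\RhS}=-\NORM{L^2(\Gamma)}{\gamma_h^{\frac12}\laf_h}^2$. After multiplication by $-\alpha$, this becomes the good term $+\alpha\NORM{L^2(\Gamma)}{\gamma_h^{\frac12}\laf_h}^2$. The remainder is handled with \eqref{eq:prop.Rh.norm.gamma}:
\[
\Big|\int_\Gamma\gamma_h^{-\frac12}\bo{D}\cdot\djumph{\RhS}\Big|\le C_\Gamma\NORM{L^2(\Gamma)}{\bo{D}}\NORM{L^2(\Gamma)}{\gamma_h^{\frac12}\laf_h}\le 2C_\Gamma^2\NORM{L^2(\Gamma)}{\bo{D}}^2+\frac18\NORM{L^2(\Gamma)}{\gamma_h^{\frac12}\laf_h}^2 .
\]

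\emph{Collecting terms.} The coefficient of $\NORM{L^2(\Gamma)}{\gamma_h^{\frac12}\laf_h}^2$ is at least $\alpha(1-\frac18-\frac18)\ge\frac{5\alpha}{8}$, which gives the stated inequality. The only delicate point is the scaling: the $h_\Gamma^{-1/2}$ in \eqref{eq:prop.Rh.norm.energy} must be exactly compensated by $\gamma_h=\gamma_0h_\Gamma$, so that the bulk cross term costs only $\alpha C\gamma_0$. This is where quasi-uniformity on $\Gamma$ (through the constancy of $\gamma_h$) enters. The orthogonality step $\int\laf_h\cdot(I-\Pi^\ell)\djumph{\RhS}=0$ is also essential, since it produces the positive multiplier term.
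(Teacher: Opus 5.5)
Your proof is correct and follows the paper's argument essentially step by step. You expand by linearity in the test pair and apply the monotonicity lemma with $(\bo{a}_2,\boldsymbol{\xi}_2)=(\bo{0},\bo{0})$ to extract $\NORM{L^2(\Gamma)}{\bo{D}}^2$. You split $\gamma_h^{-1}\bo{P}=\gamma_h^{-\frac12}\bo{D}+\laf_h$ against $\djumph{\RhS}$, then use \eqref{eq:prop.Rh.jump}, \eqref{eq:prop.Rh.norm.energy}, \eqref{eq:prop.Rh.norm.gamma} and Young's inequality, with the identical scaling $h_\Gamma^{-\frac12}\gamma_h=\gamma_0^{\frac12}\gamma_h^{\frac12}$.

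The only difference is that you absorb the bulk and scaled-stabilisation cross terms in a single Young step, where the paper uses two. This gives you the slightly better coefficient $\frac{3\alpha}{4}\ge\frac{5\alpha}{8}$ in front of $\NORM{L^2(\Gamma)}{\gamma_h^{\frac12}\laf_h}^2$.
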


\begin{proof}
By definition \eqref{eq:def.bilnear.form} of $\mathbb{A}_h$, \eqref{eq:innerproduct} of the inner product corresponding to $\NORM{\mathrm{en},h}{{\cdot}},$ \eqref{form1_ch} of $\mathbb{C}_{h}$, together with elementary manipulations, it follows that
\begin{multline}\label{lm1_eq1}
\mathbb{A}_h((\uS{u},\laf_h);(\uS{u},\laf_h)) \geq \NORM{\mathrm{en},h}{\uS{u}}^2 +\int_{\Gamma}\gamma^{-1}_{h}\bPg{\djumph{\uS{u}}}{\laf_h}\cdot \Big[\djumph{\uS{u}} - \gamma_{h}\laf_h \Big] \\ + \NORM{L^{2}(\Gamma)}{\gamma^{\frac{1}{2}}_{h}\laf_h}^2+ \mathbb{S}_{\Gamma}(\laf_{h},\laf_{h}),
\end{multline}
where we used the fact that $\zeta(E_K,\nu_K)\ge\mu_1>0$ for every $K\in\mathcal{M}.$
We now simplify all the terms~(except first and last) on the right-hand side of \eqref{lm1_eq1}.

From Lemma~\ref{lem:mono.prop} with $\uSe{v}{1}=\uS{\bu}$, $\uSe{v}{2}=\uS{\mathbf{0}},$ $\bos{\mu}_{1,h}=\laf_h,$ and $\bos{\mu}_{2,h}=\mathbf{0}_h,$ we observe that
\begin{equation}\label{eq:obs.2a}
\NORM{L^{2}(\Gamma)}{\bPg{\djumph{\uS{u}}}{\laf_h}}^2 \leq \int_{\Gamma} \bPg{\djumph{\uS{u}}}{\laf_h}\cdot\Big[\djumph{\uS{u}}+\gamma_{h}\laf_h\Big].
    \end{equation}
The second and third  terms on the right-hand side of~\eqref{lm1_eq1} therefore lead to
\begin{align}\label{eq:observation.2}
&\int_{\Gamma}\gamma^{-1}_{h}\bPg{\djumph{\uS{u}}}{\laf_h}\cdot \Big[
\djumph{\uS{u}} - \gamma_{h}\laf_h \Big] + \NORM{L^{2}(\Gamma)}{\gamma^{\frac{1}{2}}_{h}\laf_h}^2 \notag\\
& =  \int_{\Gamma}\gamma^{-1}_{h}\bPg{\djumph{\uS{u}}}{\laf_h}\cdot \Big[
\djumph{\uS{u}} + \gamma_{h}\laf_h \Big] -2\int_{\Gamma}\bPg{\djumph{\uS{u}}}{\laf_h}\cdot  \laf_h + \NORM{L^{2}(\Gamma)}{\gamma^{\frac{1}{2}}_{h}\laf_h}^2 \notag\\
\overset{\eqref{eq:obs.2a}}&\geq \NORM{L^{2}(\Gamma)}{\gamma^{-1}_{h}\bPg{\djumph{\uS{u}}} {\laf_h}}^2 -2\int_{\Gamma}\bPg{\djumph{\uS{u}}}{\laf_h}\cdot\laf_h + \NORM{L^{2}(\Gamma)}{\gamma^{\frac{1}{2}}_{h}\laf_h}^2 \notag\\
      &= \NORM{L^{2}(\Gamma)}{\gamma^{-\frac{1}{2}}_{h}\bPg{\djumph{\uS{u}}}{\laf_h}-\gamma^{\frac{1}{2}}_h\laf_h}^2.
\end{align}
Use \eqref{eq:observation.2}  in \eqref{lm1_eq1} to get
\begin{equation}\label{equivalent_norm_id}
  \mathbb{A}_h((\uS{u},\laf_h);(\uS{u},\laf_h))
  \geq \NORM{\mathrm{en},h}{\uS{u}}^2 + \NORM{L^{2}(\Gamma)}{\gamma^{-\frac{1}{2}}_{h}\bPg{\djumph{\uS{u}}} {\laf_h}-\gamma^{\frac{1}{2}}_{h}\laf_h}^2 + \mathbb{S}_{\Gamma}(\laf_{h},\laf_{h}).
\end{equation}
On the other hand, we have
\begin{align}
  \mathbb{A}_h{}&((\uS{u},\laf_h);(-\alpha\RhS,\bo{0}_h))\nonumber\\
  &= \int_\Omega\bbsigma_h(\uS{u}):\bbeps_h(-\alpha\RhS)
     + \stabc{\uS{u}}{-\alpha\RhS} 
       -\a \int_{\Gamma} \gamma^{-1}_{h}\bPg{\djumph{\uS{u}}}{\laf_h}\cdot \djumph{\RhS}\notag\\
  \overset{\pm\laf_{h}}&= \int_\Omega\bbsigma_h(\uS{u}):\bbeps_h(-\alpha\RhS)
    + \stabc{\uS{u}}{-\alpha\RhS}
  \notag\\& \quad \qquad 
  -\a \int_{\Gamma}\Big[ \gamma^{-1}_{h}\bPg{\djumph{\uS{u}}}{\laf_h} - \laf_h\Big]\cdot \djumph{\RhS} - \a \int_{\Gamma} \laf_h \cdot \jump{\RhS}. \label{lm1_manineq}
\end{align}
We now derive upper bounds for each term on the right-hand side of \eqref{lm1_manineq}; since these terms are all linear with respect to $-\alpha$, we drop for the moment this term (when we re-introduce it later, these upper bounds will become lower bounds). 
For the first term, using $\bbtau:\bbeps_{h}(\RhS)=\bbtau:\GS{K}(\RhS)$, the definition \eqref{eq:def.sigma.h} of $\bbsigma_{h}(\cdot)$ and the Cauchy--Schwarz inequality, we get
\begin{align}\label{eq:sigma_Rh_uh}
\int_\Omega\bbsigma_h(\uS{u}):\bbeps_h(\RhS) 
& \leq   \NORM{\mathrm{en},h}{\uS{u}}\NORM{\mathrm{en},h}{\uS{R}}\nonumber\\
\overset{\eqref{eq:norm_energy},\eqref{eq:prop.Rh.norm.energy}}& \leq C_{0}C_{\mathrm{R}}
\gamma^{\frac{1}{2}}_{0}
 \NORM{\mathrm{en},h}{\uS{u}}\NORM{L^{2}(\Gamma)}{\gamma^{\frac{1}{2}}_{h}\laf_{h}}\nonumber\\ 
&\leq 2(C_{0}C_{\mathrm{R}})^{2}\gamma_{0}\NORM{\mathrm{en},h}{\uS{u}}^{2}+ \frac{1}{8}\NORM{L^{2}(\Gamma)}{\gamma^{\frac{1}{2}}_{h}\laf_h}^2,
\end{align}
where $C_0$ is the equivalence constant from \eqref{eq:norm_energy} and  Young's inequality is used in the last step.

Next, the Cauchy--Schwarz inequality and definition of $\NORM{\mathrm{en},h}{{\cdot}}$ leads to
\begin{align}
\stabc{\uS{u}}{\RhS} &\leq \NORM{\mathrm{en},h}{\uS{u}}\stabc{\uS{R}}{\RhS}^{\frac{1}{2}}\notag\\
\overset{\eqref{eq:stab.Sh.2}}&\leq 2C_{1}C_{\mathrm{R}}\gamma^\frac{1}{2}_{0}\NORM{\mathrm{en},h}{\uS{u}}\NORM{L^{2}(\Gamma)}{\gamma^{\frac{1}{2}}_{h}\laf_{h}}\notag\\
&\leq 2(C_{1} C_{\mathrm{R}})^{2}\gamma_{0}\NORM{\mathrm{en},h}{\uS{u}}^2 + \frac{1}{8}\NORM{L^{2}(\Gamma)}{\gamma^{\frac{1}{2}}_{h}\laf_h}^2.\label{bilform_cont}
\end{align}
where $C_1$ is the equivalence constant from \eqref{eq:norm_energy} and  Young's inequality is used in the last step.
The third term in \eqref{lm1_manineq} is estimated by the Cauchy--Schwarz inequality and Young's inequality to get
\begin{multline*}
\int_{\Gamma}\Big[ \gamma^{-1}_{h}\bPg{\djumph{\uS{u}}}{\laf_h} -\laf_h\Big]\cdot \djumph{\uS{R}}\\ 
\overset{\eqref{eq:prop.Rh.norm.gamma}}\leq 2C_{\Gamma}^2 \NORM{L^{2}(\Gamma)}{ \gamma^{-\frac{1}{2}}_{h}\bPg{\djumph{\uS{u}}}{\laf_h} - \gamma^{\frac{1}{2}}_{h}\laf_h}^2
  + \frac{1}{8}\NORM{L^{2}(\Gamma)}{\gamma^{\frac{1}{2}}_{h}\laf_h}^2.
\end{multline*}
For the last term, using the definition of $\Pi_{\cF_{\Gamma}}^{\ell},$ we have 
\begin{equation}\label{lm1_ineq1}
\int_{\Gamma} \laf_h \cdot \jump{\RhS}=\int_{\Gamma} \laf_{h}\cdot \Pi_{\cF_{\Gamma}}^{\ell} \jump{\RhS} \overset{\eqref{eq:prop.Rh.jump}}=-\int_{\Gamma} \laf_h \cdot \gamma_{h} \laf_h
= -\NORM{L^2(\Gamma)}{\gamma^{\frac{1}{2}}_{h}\laf_h}^2.
\end{equation}
Use \eqref{eq:sigma_Rh_uh}--\eqref{lm1_ineq1}~(with multiplication of $-\a$ both sides)  in \eqref{lm1_manineq} to get
\begin{multline*}
  -2\alpha(C_{0}^2 +C_{1}^2) C_{\mathrm{R}}^{2}\gamma_{0}\NORM{\mathrm{en},h}{\uS{u}}^{2}
     -2{\alpha}C_{\Gamma}^2
     \NORM{L^{2}(\Gamma)}{
       \gamma^{-\frac{1}{2}}_{h}\bPg{\djumph{\uS{u}}}{\laf_h}
       - \gamma^{\frac{1}{2}}_{h}\laf_h}^2 \\ + \frac{5\alpha}{8}\NORM{L^{2}(\Gamma)}{\gamma^{\frac{1}{2}}_{h}\laf_h}^2\leq \mathbb{A}_h((\uS{u},\laf_h);(-\alpha\RhS,\bo{0}_h)).
\end{multline*}
Combine \eqref{equivalent_norm_id}~(by dropping non-negative term $\mathbb{S}_{\Gamma}(\laf_{h},\laf_{h})$) and the inequality above to get
\begin{multline*} 
  \left(1-\alpha C \gamma_{0} \right)\NORM{\mathrm{en},h}{\uS{u}}^{2}
     +\left(1-2{\alpha}C_{\Gamma}^2\right)
     \NORM{L^{2}(\Gamma)}{
       \gamma^{-\frac{1}{2}}_{h}\bPg{\djumph{\uS{u}}}{\laf_h}
       - \gamma^{\frac{1}{2}}_{h}\laf_h
     }^2 \notag\\  +\frac{5\alpha}{8}\NORM{L^{2}(\Gamma)}{\gamma^{\frac{1}{2}}_{h}\laf_h}^2
  \leq \mathbb{A}_h((\uS{u},\laf_h);(\uS{u}-\alpha\RhS,\laf_h)),
\end{multline*}
where we set $C:=2(C_{0}^2 +C_{1}^2) C_{\mathrm{R}}^{2}.$
This completes the proof. 
\end{proof}

\subsection{Existence and uniqueness of the solution}\label{sec:proof.existence.uniqueness}
We are now ready to prove Proposition~\ref{wellposedness}. The existence of a solution to the discrete scheme \eqref{eq:scheme} is established using the topological degree. 

\textit{\uline{Setting}:} We consider the finite dimensional space  $\dsml=\ddgskS\times \Ms$ equipped with the norm
\begin{equation*}
\NORM{\dsml}{V_{h}}:=\Big(\NORM{\mathrm{en},h}{\uS{v}}^2
+\NORM{L^2(\Gamma)}{\gamma^{\frac{1}{2}}_{h}\bos{\mu}_h}^2\Big)^{\frac{1}{2}} \;\; \text{ for all } \;\; V_{h}=(\uS{v},\bos{\mu}_h)\in\dsml.  
\end{equation*}
Let $(\cdot,\cdot)_{\dsml}$ be the inner product associated with this norm,
and define $\Psi(W_{h},\rho):\dsml\times[0,1]
\to\dsml$ such, that for all $W_{h}=(\uS{w},\bo{r}_h)$, $V_{h}=(\uS{v},\bos{\mu}_h)$ and $\rho\in [0,1]$,
\begin{align*}
(\Psi(W_{h},\rho),V_{h})_{\dsml} & := \int_\Omega\bbsigma_h(\uS{w}):\bbeps_h(\uS{v})
    + \stabc{\uS{w}}{\uS{v}}
     + \rho \mathbb{C}_h((\uS{w},\bo{r}_h);(\uS{v},\bos{\mu}_h))
     \nonumber\\&\qquad+ (1-\rho)\int_{\Gamma} \gamma_{h} \bos{r}_h\cdot \bos{\mu}_h - \sum_{K\in\cM}\int_K\mathbf{f}\cdot \bo{v}_K.
\end{align*}
\textit{\uline{Existence}:}
We consider the problem, for $\rho\in [0,1]$: Find $W_h\in\dsml$ such that
  \begin{equation}\label{eq:pb.Gmap}
  \Psi(W_h,\rho)=0.
  \end{equation}
For $\rho=1$, a solution to this problem is a solution to the scheme \eqref{eq:scheme}. We will prove that such a solution exists by using the topological degree argument encapsulated in \cite[Lemma~9.6]{hho-book}. We need for this check the three conditions in this lemma. 

(i) \emph{Continuity of the map}: Since $\dsml$ is a finite-dimensional space, it is trivial to check that
the bilinear form $\mathbb{A}_{h}$, the function $\max\{0,\cdot\}$, the contact operator $\mathbb{C}_{h}$, and the relevant scalar products are continuous; hence $\Psi$ is continuous. 

(ii) \emph{Bound on a priori solutions}: Let $\rho\in[0,1]$ and suppose that $W_h$ solves \eqref{eq:pb.Gmap}. We need to prove that
\begin{equation}\label{eq:bound.td}
\NORM{\dsml}{W_{h}} \leq  C_{\mathrm{data}},
\end{equation}
 with $C_{\mathrm{data}}$ independent of $\rho$. 

To prove \eqref{eq:bound.td}, we test \eqref{eq:pb.Gmap}
with  $V_{h}=(\uS{w}-\alpha\RhS, \bo{r}_h)$~($\RhS$ is from Lemma~\ref{rem:lifting} with $\bos{\mu}_h=-\gamma_{h}\bo{r}_h$)  and then rewrite the resultant equation by splitting by $\rho$ and $1-\rho$ as
\begin{multline}\label{eq:rho.spliiting}
 \rho\mathbb{A}_h((\uS{w},\bo{r}_h);(\uS{w}-\alpha\RhS,\bo{r}_h)) 
\\ 
\quad +(1-\rho)\Bigg(\int_\Omega\bbsigma_h(\uS{w}):\bbeps_h(\uS{w}-\alpha\RhS)+ \stabc{\uS{w}}{\uS{w}-\alpha\RhS}+\NORM{L^2(\Gamma)}{\gamma_{h}^{\frac{1}{2}} \bo{r}_{h}}^{2}\Bigg) \\
=\sum_{K\in\cM}\int_K\mathbf{f}\cdot\bo{w}_K.  
\end{multline}
By Lemma~\ref{lem:stability_results}, the first part on the left-hand side of \eqref{eq:rho.spliiting} satisfies  
\begin{equation*} 
  C_\alpha\rho\NORM{\dsml}{W_{h}}^2
  \leq \rho\mathbb{A}_h((\uS{w},\bo{r}_h);(\uS{w}-\alpha\RhS,\bo{r}_h)),
\end{equation*}
where $C_{\a}:=\min\{1-\alpha C \gamma_{0}, 1- 2\a C_{\Gamma}^2, \frac{5\a}{8}\}$ and $\a$ has been chosen to ensure $C_\a>0.$
From \eqref{eq:sigma_Rh_uh}-\eqref{bilform_cont}, we have
\begin{equation*}
 C_\a\NORM{\dsml}{W_{h}}^2
  \leq 
    \int_\Omega\bbsigma_h(\uS{w}):\bbeps_h(\uS{w}-\alpha\RhS)
    + \stabc{\uS{w}}{\uS{w}-\alpha\RhS}
    + \NORM{L^2(\Gamma)}{\gamma^{\frac{1}{2}}_{h}\bo{r}_h}^2.
\end{equation*}
Use these two above inequalities in \eqref{eq:rho.spliiting} and then the Cauchy--Schwarz inequality leads to
\begin{align*}
C_\a\NORM{\dsml}{W_{h}}^2
&\leq \left|\sum_{K\in\cM}\int_K\mathbf{f}\cdot \bo{w}_K\right|\leq \sum_{K\in \cM}\NORM{L^{2}(K)}{\mathbf{f}}  \NORM{L^{2}(K)}{\bo{w}_{K}} \nonumber\\& \leq C_P\NORM{L^2(\Omega)}{\mathbf{f}}\NORM{\mathrm{en},h}{\uS{w}} \leq C_P\NORM{L^2(\Omega)}{\mathbf{f}}\NORM{\dsml}{W_{h}},
\end{align*}
where the discrete Poincar\'e inequality~\cite[Theorem~4]{di2025uniform} is used in the second last step, and the conclusion follows from the definition of $\NORM{\dsml}{{\cdot}}$.
The above inequality leads to
\begin{equation*}
\NORM{\dsml}{W_{h}} \leq \frac{C_P}{C_\a}\NORM{L^2(\Omega)}{\mathbf{f}}=:C_{\mathrm{data}}.
\end{equation*}
This concludes the proof of \eqref{eq:bound.td}.

(iii) \emph{Homotopy to an invertible affine map}: The map $\Psi(\cdot, 0)$ is an affine function from $\dsml\to \dsml$.
 Its invertibility is ensured by the fact that, owing to \eqref{eq:bound.td}, its kernel is trivial.
Hence, the map $\Psi$ satisfies all three conditions of the topological degree lemma. Consequently, there exists $W_h \in \dsml$ such that $\Psi(W_h,1)=0$
which, as noted above, means that $W_h$ is a solution to \eqref{eq:scheme}.

\textit{\uline{Uniqueness}:} 
Let $(\uSe{u}{1},\laf_{1,h})$, $(\uSe{u}{2},\laf_{2,h})$ solve \eqref{eq:scheme}, and set $\uS{w}:=\uSe{u}{1}-\uSe{u}{2}$ and $\bo{r}_{h}:=\laf_{1,h}-\laf_{2,h}$.
Uniqueness is proved if we show that $(\uS{w}, \bo{r}_{h})=(\uS{0},\mathbf{0}_{h}).$ 

We start by proving $\uS{w}=\uS{0}$, which expresses the uniqueness of displacement. Taking the difference of discrete scheme~\eqref{eq:scheme} written first for $(\uS{u},\laf_{h})=(\uSe{u}{1},\laf_{1,h})$, then for $(\uS{u},\laf_{h})=(\uSe{u}{2},\laf_{2,h})$, we infer that
\begin{equation}\label{eq:diff.eqn}
\mathbb{A}_h((\uSe{u}{1},\laf_{1,h});(\uS{v},\bos{\mu}_h))-\mathbb{A}_h((\uSe{u}{2},\laf_{2,h});(\uS{v},\bos{\mu}_h))=0
  \quad\forall(\uS{v},\bos{\mu}_h)\in\ddgskS\times \Ms.
\end{equation}
Making $(\uS{v},\bos{\mu}_h)=(\uS{w},\bo{r}_{h})$ in \eqref{eq:diff.eqn} and using the definitions of $\mathbb{A}_{h}$ and $\mathbb{C}_{h}$ lead to
\begin{multline}\label{eq:E1}
  \NORM{\mathrm{en},h}{\uS{w}}^2 + \int_{\Gamma} \Big[\gamma^{-1}_h\bPg{\djumph{\uSe{u}{1}}}{\laf_{1,h}}
-\gamma^{-1}_h\bPg{\djumph{\uSe{u}{2}}}{\laf_{2,h}}\Big]\cdot\\\Big[\left(\djumph{\uSe{u}{1}}-\gamma_h \laf_{1,h}\right) -\left(\djumph{\uSe{u}{2}}-\gamma_h\laf_{2,h}\right)\Big]+ \NORM{L^2(\Gamma)}{\gamma_h^{\frac{1}{2}} \bo{r}_{h}}^2  = 0.
\end{multline}
Use Lemma~\ref{lem:mono.prop} together with analogous arguments to \eqref{eq:observation.2} in the last two terms on the left-hand side  of \eqref{eq:E1} to get
$$\NORM{\mathrm{en},h}{\uS{w}}^2 
    +\NORM{L^2(\Gamma)}{
      \gamma^{\frac{1}{2}}_{h}\bo{r}_{h}
      - \big[\gamma^{-\frac{1}{2}}_{h}\bPg{\djumph{\uSe{u}{1}}}{\laf_{1,h}}
        -\gamma^{-\frac{1}{2}}_{h}\bPg{\djumph{\uSe{u}{2}}}{\laf_{2,h}}
    \big]}^2   
    \leq  0.$$
This leads to $\uS{w}=\uS{0}$, hence $\uSe{u}{1}=\uSe{u}{2},$ and
\begin{equation}
\gamma_{h} \bo{r}_{h} = \bPg{\djumph{\uSe{u}{1}}}{\laf_{1,h}} - \bPg{\djumph{\uSe{u}{2}}}{\laf_{2,h}}.\label{eq:E2.q}
\end{equation}
We next prove the uniqueness of the discrete Lagrange multiplier, i.e., $\bo{r}_{h}=\bo{0}_h.$ To this end, we test the difference equation~\eqref{eq:diff.eqn} with $(\uS{v},\bos{\mu}_h)=(\RhS,\bo{0}_h)$,
where $\RhS$ is chosen from Lemma~\ref{rem:lifting} with $\bos{\mu}_{h}= \gamma_{h} \bo{r}_{h}.$ 
This yields
\begin{equation*}
  \mathbb{A}_h((\uS{w},\bo{r}_{h});(\RhS,\bo{0}_h))=0.
\end{equation*}
The above equation, the definitions of $\mathbb{A}_{h}$ and $\mathbb{C}_{h}$, $\uS{w}=\uS{0}$, and definition of $\Pi_{\cF_{\Gamma}}^{\ell}$ lead to
\begin{align*}
& \int_{\Gamma}\Big[\gamma^{-1}_h\bPg{\djumph{\uSe{u}{1}}}{\laf_{1,h}}
    - \gamma^{-1}_h \bPg{\djumph{\uSe{u}{2}}}{\laf_{2,h}}\Big]\cdot \djumph{\RhS} \nonumber\\ \overset{\eqref{eq:E2.q}}&=\int_{\Gamma} \bo{r}_{h}\cdot \Pi_{\cF_{\Gamma}}^{\ell}\jump{\RhS}
\overset{\eqref{eq:prop.Rh.jump}}= \NORM{L^2(\Gamma)}{\gamma_{h}^{\frac{1}{2}}\bo{r}_{h}}^2=0.
\end{align*}
Therefore, $\bo{r}_{h}=\mathbf{0}_{h}$, and hence $\laf_{1,h}=\laf_{2,h}.$ This proves the uniqueness of the solution to the discrete scheme \eqref{eq:scheme}, and this concludes the proof of the proposition. \qed

\section{Proof of the error estimates}\label{sec:proof_error}

\subsection{Proof of Theorem~\ref{thm:abs.err.bound}}

The proof of the abstract error bound is divided into five steps. Let us define the errors
$$ \uS{e} := \IhS\bo{u}-\uS{u},
  \qquad \bos{\eta}_h :=\Pi_{\cF_{\Gamma}}^{\ell}\laf-\laf_h.$$

\textit{\uline{Step~1}:}~(Error equation)
Since $\bu$ is a weak solution of the contact problem~\eqref{Form1_nonlinearform} and by regularity assumed on $\laf$ in the theorem, we have $-\div(\bbsigma(\bu))=\mathbf{f}\in \boldsymbol{L}^2(\Omega)$ and $\laf = -\gamma_\normal^+\bbsigma(\bu)\in \boldsymbol{L}^2(\Gamma)$. Hence, $\bbsigma(\bu)\in\Sigmaspace$ (see \eqref{sigma.space}) and, using the definition \eqref{eq:adjoint.consist} of $w_h$ with $\uS{v}=\uS{e}$, we get
\begin{equation}\label{cons.dual_equ_1}
\int_{\Omega} \bbsigma(\bu): \bbeps_{h}(\uS{e})  =  -w_{h}(\bbsigma(\bu),\uS{e}) - \int_{\Gamma} \laf \cdot \jump{\uS{e}}_{h} + \sum_{K\in \cells} \int_{K}  \mathbf{f} \cdot \bo{e}_{K}.
\end{equation}
Subtracting the scheme~\eqref{eq:scheme} (with $(\uS{v},\bos{\mu}_h)=(\uS{e},\bos{\eta}_h)$) from \eqref{cons.dual_equ_1}, we obtain
\begin{align}\label{eq:thm41.cons.dual.somme}
\int_{\Omega} \left(\bbsigma(\bu)-\bbsigma_h(\uS{u}) \right): \bbeps_{h}(\uS{e})& - \stabc{\uS{u}}{\uS{e}} =  -w_{h}(\bbsigma(\bu),\uS{e}) \notag \\&\qquad - \int_{\Gamma} \laf \cdot \jump{\uS{e}}_{h}  +\mathbb{C}_h((\uS{u},\laf_h);(\uS{e},\bos{\eta}_h)).
\end{align}
The definition of $\NORM{\mathrm{en},h}{{\cdot}}$~(norm induced from the inner product \eqref{eq:innerproduct}) along with $\zeta(E_K,\nu_K)\ge\mu_1>0$ for every $K\in\mathcal{M}$ leads to
\begin{align}
\NORM{\mathrm{en,h}}{\uS{e}}^2 &\leq \int_{\Omega} \bbsigma_h(\uS{e}): \bbeps_{h}(\uS{e}) + \stabc{\uS{e}}{\uS{e}} \notag\\
   \overset{\pm\bbsigma(\bu)}&= -\int_{\Omega} \left(\bbsigma(\bu)-\bbsigma_h(\IhS\bu)\right): \bbeps_{h}(\uS{e}) +\stabc{\IhS \bu}{\uS{e}}  \notag\\
   & \qquad + \int_{\Omega} \left(\bbsigma(\bu)-\bbsigma_h(\uS{u}) \right): \bbeps_{h}(\uS{e})   - \stabc{\uS{u}}{\uS{e}} \notag\\
   \overset{\eqref{eq:thm41.cons.dual.somme}}&= -\int_{\Omega} \left(\bbsigma(\bu)-\bbsigma_h(\IhS\bu)\right): \bbeps_{h}(\uS{e}) +\stabc{\IhS \bu}{\uS{e}} \notag\\
   & \qquad -w_{h}(\bbsigma(\bu),\uS{e}) - \int_{\Gamma} \laf \cdot \jump{\uS{e}}_{h}  +\mathbb{C}_h((\uS{u},\laf_h);(\uS{e},\bos{\eta}_h)).\label{eq:thm41:error.equation.intermidate}
\end{align}
Wenow simplify the last two terms of~\eqref{eq:thm41:error.equation.intermidate}. 
The definition~\eqref{form1_ch} of $\mathbb{C}_{h},$ the fact that $\laf=\gamma^{-1}_h\bPg{\jump{\bu}}{\laf},$ and elementary manipulations lead to
 \begin{align}\label{eq:thm41.last.two.terms}
 -\int_{\Gamma} \laf \cdot \jump{\uS{e}}_{h} &+\mathbb{C}_h((\uS{u},\laf_h);(\uS{e},\bos{\eta}_h))\notag\\
 &=
\int_{\Gamma}\Big[\gamma^{-1}_h\bPg{\djumph{\uS{u}}}{\laf_h}-\gamma^{-1}_h\bPg{\jump{\bu}}{\laf}\Big]
 \cdot \Big[\djumph{\uS{e}}-\gamma_h \bos{\eta}_{h}\Big]\notag\\
 &\qquad + \int_{\Gamma} \gamma_h(\laf_{h}-\laf) \cdot \bos{\eta}_{h} +\mathbb{S}_{\Gamma}(\laf_{h},\bos{\eta}_{h}).
\end{align}
Substitute \eqref{eq:thm41.last.two.terms} into \eqref{eq:thm41:error.equation.intermidate} to get
\begin{align}\label{eq:error.equation}
 \NORM{\mathrm{en,h}}{\uS{e}}^2 &\leq -\underbrace{\int_{\Omega} \left(\bbsigma(\bu)-\bbsigma_h(\IhS\bu)\right): \bbeps_{h}(\uS{e}) +\stabc{\IhS \bu}{\uS{e}} - w_{h}(\bbsigma(\bu),\uS{e})}_{I_{1}} 
 \notag\\& \qquad + \underbrace{ \int_{\Gamma}\Big[\gamma^{-1}_h\bPg{\djumph{\uS{u}}}{\laf_h}-\gamma^{-1}_h\bPg{\jump{\bu}}{\laf}\Big]
 \cdot \Big[\djumph{\uS{e}}-\gamma_h \bos{\eta}_{h}\Big]}_{I_{2}}\notag\\&  \qquad\quad + \underbrace{\int_{\Gamma} \gamma_h (\laf_{h}-\laf) \cdot \bos{\eta}_{h}}_{I_{3}} + \underbrace{\mathbb{S}_{\Gamma}(\laf_{h},\bos{\eta}_{h})}_{I_{4}}. 
\end{align}

\textit{\uline{Step~2}:}~(Estimate for $I_{1}$)
Applying the Cauchy--Schwarz inequalities and invoking the definitions \eqref{eq:primal.consist} of $\bos{\fC}_{h}$ and \eqref{eq:adjoint.consist} of $\bos{\cW}_h,$ we infer that
  \begin{align*}
I_{1}&\overset{\eqref{eq:norm_energy}}\lesssim\Big(\bos{\fC}_h(\bu,\IhS\bu)+\bos{\cW}_h(\bbsigma(\bu))\Big)\NORM{\mathrm{en},h}{\uS{e}}.
  \end{align*}
\textit{\uline{Step~3}:}~(Estimate for $I_{2}$)
Considering $I_{2},$ we can write 
\begin{align*}
I_{2}&= \underbrace{\int_{\Gamma}\Big[\gamma^{-1}_h\bPg{\djumph{\uS{u}}}{\laf_h}-\gamma^{-1}_h\bPg{\jump{\bu}}{\laf}\Big]\cdot
 \Big[\djumph{\IhS\bu}-\jump{\bu}+ \gamma_h\left(\Pi^{\ell}_{\cF_{\Gamma}}\laf- \laf\right)\Big]}_{I_{2\rm{a}}}\\&\quad+ \underbrace{\int_{\Gamma}\Big[\gamma^{-1}_h\bPg{\djumph{\uS{u}}}{\laf_h}-\gamma^{-1}_h\bPg{\jump{\bu}}{\laf}\Big]\cdot
 \Big[\left(\jump{\bu}+\gamma_h \laf\right) - \left(\djumph{\uS{u}}+\gamma_h \laf_h\right)\Big]}_{I_{2\rm{b}}} 
 \\&\qquad +\underbrace{\int_{\Gamma}\Big[\gamma^{-1}_h \bPg{\djumph{\uS{u}}}{\laf_h}-\gamma^{-1}_h\bPg{\jump{\bu}}{\laf}\Big]\cdot
 \Big[-2\gamma_h \bos{\eta}_{h}\Big]}_{I_{2\rm{c}}}  
\end{align*}
 Applying the Cauchy--Schwarz inequality and then the triangle inequality, we estimate $I_{2\rm{a}}$ as
\begin{multline*}
I_{2\rm{a}}\overset{\eqref{eq:tang.lagrange.multiplier}} \leq \NORM{L^{2}(\Gamma)}
  {\gamma^{-\frac{1}{2}}_h\bPg{\djumph{\uS{u}}}{\laf_h}-\gamma^{\frac{1}{2}}_h\laf} \Big(\gamma^{-\frac{1}{2}}_h\NORM{L^2(\Gamma)}{\jump{\bo{u}}
    -\djumph{\IhS\bo{u}}} +\gamma^{\frac{1}{2}}_h\NORM{L^2(\Gamma)}{\laf-\Pi_{\cF_{\Gamma}}^{\ell}\laf}\Big).
\end{multline*}
Next, use Lemma~\ref{lem:mono.prop}   to get the estimate for $I_{2\rm{b}}$
\begin{equation*}
I_{2\rm{b}} \leq -\NORM{L^{2}(\Gamma)}
  {\gamma^{-\frac{1}{2}}_h\bPg{\djumph{\uS{u}}}{\laf_h}-\gamma^{\frac{1}{2}}_h\laf}^{2}.
\end{equation*}
Finally, to estimate $I_{2\rm{c}},$ we need some preparations. To do so, test the discrete scheme~\eqref{eq:scheme} with
$(\uS{v},\bos{\mu}_h)=(\RhS,\bo{0}_h)$, where the definition of $\RhS$ follows from Lemma~\ref{rem:lifting} with  $\bos{\mu}_h=-2\gamma_h \bos{\eta}_h$ and use the definition~\eqref{form1_ch} of $\mathbb{C}_{h}$ together with $\bo{R}_{K}=\mathbf{0}_{K}$ for all $K\in\cells$ to get
\begin{equation}
  \int_\Omega\bbsigma_h(\uS{u}):\bbeps_h(\RhS)
  + \stabc{\uS{u}}{\RhS}
   + \int_{\Gamma}
      \gamma^{-1}_h\bPg{\djumph{\uS{u}}}{\laf_h}\cdot
      \djumph{\RhS} = 0. \label{eq:scheme.Rh2}
\end{equation}
Since $\bbtau:\bbeps_{h}(\RhS)=\bbtau:\GS{K}(\RhS)$ for any symmetric matrix $\bbtau$, the definitions of the orthogonal projector $\Pi_{K}^{k}$ and of $\GS{K}$ (see \eqref{eq:G1K_enhance}), together with $\bo{R}_{K}=\mathbf{0}_{K}$,
lead to
\begin{align}
  \int_{\Omega}\bbsigma(\bo{u}):\bbeps_h(\RhS)
 =& \sum_{K\in \cells}\int_{K}\Pi^k_K\bbsigma(\bo{u}):\GS{K}\RhS \nonumber\\
 =& \sum_{K\in \cells}\sum_{\sigma\in\cF_K}\int_\sigma \UpS{\Ksig}\RhS\cdot\Pi^k_K\bbsigma(\bo{u})\normal_{\Ksig}.\nonumber\\
  =&\sum_{K\in \cells}\sum_{\sigma\in\cF_K}\int_\sigma \UpS{\Ksig}\RhS\cdot \bbsigma(\bo{u})\normal_{\Ksig}\nonumber\\
  & \qquad -\sum_{K\in \cells}\sum_{\sigma\in\cF_K}\int_\sigma \UpS{\Ksig}\RhS\cdot\left(\bbsigma(\bo{u})-{\Pi}^k_K\bbsigma(\bo{u})\right)\normal_{\Ksig}.
\label{eq:sigma.Rh2}
\end{align}
For all $\sigma\in \cF_\Gamma$, we denote by $K$ and $L$ the cells that contain $\sigma$ and are, respectively, on the positive and negative side of $\Gamma$. 
Since $\UpS{\Ksig}\uS{R}=\UpS{\Lsig}\uS{R}$ for all $\sigma\in \cF^\inte\setminus\cF_\Gamma$, $\UpS{\Ksig}\uS{R}=\mathbf{0}$ for all $\sigma\in\cF_K\cap\cF^\ext,$ and the normal traces of $\bbsigma(\bu)$ are continuous across $\Gamma$, we have
\begin{align}
&\sum_{K\in \cells}\sum_{\sigma\in\cF_K}\int_\sigma  \UpS{\Ksig}\RhS\cdot \bbsigma(\bo{u})\normal_{\Ksig} \nonumber\\
&= \sum_{K\in \cells}\sum_{\sigma\in \cF_K \cap \cF_{\Gamma}}\int_\sigma  \UpS{\Ksig}\RhS\cdot \bbsigma(\bo{u})\normal_{\Ksig} +\underbrace{\sum_{K\in \cells}\sum_{\sigma\in\cF_K \setminus\cF_\Gamma}\int_\sigma  \UpS{\Ksig}\RhS\cdot \bbsigma(\bo{u})\normal_{\Ksig}}_{0}\nonumber\\
&= -\sum_{\sigma\in\cF_\Gamma}\int_\sigma
   \left(\UpS{\Ksig}\uS{R}-\UpS{\Lsig}\uS{R}\right)\cdot \laf \nonumber\\& = -\sum_{\sigma\in\cF_\Gamma}\int_\sigma
    \laf \cdot \Jsig{\RhS}=-\int_\Gamma
    \laf \cdot \djumph{\RhS},
\label{traction_sum_R_h}
\end{align}
where $\gamma_{\normal}^{+}\bbsigma(\bu)=-\laf$  and we have used $\UpS{\Lsig}\uS{R}=0$ (since $L$ is on the negative side of $\sigma$) in the second last step. Plug \eqref{traction_sum_R_h} into \eqref{eq:sigma.Rh2} together with the fact $\UpS{\Ksig}\uS{R}=\bo{0}$ for all $\sigma\notin\faces_{\Gamma,K}^{+}$ to get
\begin{equation}\label{eq:D1.Rh2}
  \int_\Omega\bbsigma(\bo{u}):\bbeps_h(\RhS)
  + \int_\Gamma
    \laf \cdot \djumph{\RhS}
 = -\sum_{K\in \cells}\sum_{\sigma\in\cF_{\Gamma,K}^+}\int_\sigma \UpS{\Ksig}\RhS\cdot\left(\bbsigma(\bo{u})-{\Pi}^k_K\bbsigma(\bo{u})\right)\normal_{\Ksig},
\end{equation}

Subtracting \eqref{eq:D1.Rh2} from \eqref{eq:scheme.Rh2} and rearranging the resulting terms, we obtain
\begin{multline*}
\int_{\Gamma}\Big[\gamma^{-1}_h\bPg{\djumph{\uS{u}}}{\laf_h}-\laf\Big]\cdot\djumph{\RhS} = -\int_\Omega\left(\bbsigma_h(\uS{u})-\bbsigma(\bo{u})\right):\bbeps_h(\RhS)-\stabc{\uS{u}}{\RhS}\\
+\sum_{K\in \cells}\sum_{\sigma\in\cF_K}\int_\sigma \UpS{\Ksig}\RhS\cdot\left(\bbsigma(\bo{u})-{\Pi}^k_K\bbsigma(\bo{u})\right)\normal_{\Ksig}.
\end{multline*}
Adding and subtracting $\Pi_{\cF_{\Gamma}}^{\ell}\djumph{\RhS}\overset{\eqref{eq:prop.Rh.jump}}=-2\gamma_h\boldsymbol{\eta}_h$ in the first term  on the left-hand side of the above equation, and writing $\uS{u}=\IhS\bo{u}-\uS{e}$ in the first two terms on the right-hand side, followed by a rearrangement of the terms, yield
\begin{align*}
&I_{2\rm{c}} = \underbrace{\int_\Omega(\bbsigma(\bo{u})-\bbsigma_h(\IhS\bo{u})):\bbeps_h(\RhS)-\stabc{\IhS\bo{u}}{\RhS} + \int_\Omega \bbsigma_h(\uS{e}):\bbeps_h(\RhS) +\stabc{\uS{e}}{\RhS}}_{I_{3\rm{c}}}\\ &\qquad\quad + \underbrace{\int_{\Gamma}\Big[\gamma^{-1}_h \bPg{\djumph{\uS{u}}}{\laf_h}-\laf\Big]\cdot\Big[\Pi_{\cF_{\Gamma}}^{\ell}\djumph{\RhS}-\djumph{\RhS}\Big]}_{I_{4\rm{c}}}
    \\ &\qquad \qquad
    +\underbrace{\sum_{K\in \cells}\sum_{\sigma\in\cF_K}\int_\sigma \UpS{\Ksig}\RhS\cdot\left(\bbsigma(\bo{u})-{\Pi}^k_K\bbsigma(\bo{u})\right)\normal_{\Ksig}}_{I_{5\rm{c}}}.
\end{align*}
Now, we will estimate each term on the right-hand side of the above equation. Using Cauchy--Schwarz inequalities and recalling the definition \eqref{eq:primal.consist} of $\bos{\fC}_{h}$,
  \begin{align*}
I_{3\rm{c}}
&\lesssim\bos{\fC}_h(\bu,\IhS\bu)\NORM{\mathrm{en},h}{\uS{R}} + 
\NORM{\mathrm{en},h}{\uS{e}}\NORM{\mathrm{en},h}{\RhS}\\
\overset{\eqref{eq:norm_energy},\eqref{eq:prop.Rh.norm.energy}}
&\lesssim C_{\mathrm{R}}\gamma_{0}^{\frac{1}{2}}\bos{\fC}_h(\bu,\IhS\bu)\NORM{L^2(\Gamma)}{\gamma^{\frac{1}{2}}_h\bos{\eta}_{h}} + \underbrace{C_{\mathrm{R}}\gamma_{0}^{\frac{1}{2}}\NORM{\mathrm{en},h}{\uS{e}}\NORM{L^2(\Gamma)}{\gamma^{\frac{1}{2}}_h\bos{\eta}_{h}}}_{I_{3\rm{c}}^{\prime}}.
\end{align*}
To estimate ${I_{4\rm{c}}},$ we first use the the definitions 
\eqref{eq:jump.n} of $\djumph{\cdot}$, \eqref{eq:Rh.construction} of $\RhS,$ and of $\Pi_{\cF_{\Gamma}}^{\ell}$ to write
\begin{equation}\label{eq:jump.approx}
 \Pi_{\sigma}^{\ell}\djumph{\RhS}-\djumph{\RhS}=\Pi_{\sigma}^{\ell}\Upsilon_{\Ksig}^{k+1}\uvec{R}_{h}-\Upsilon_{\Ksig}^{k+1}\uvec{R}_{h}\quad \forall \sigma \in \faces_{\Gamma,K}^+.
 \end{equation}
This identity, together with the Cauchy--Schwarz inequality and \eqref{eq:prop.Rh.ups.bound}, yields
\begin{align*}
{I_{4\rm{c}}}&\leq 2C_{\mathrm{D}}\left(\NORM{L^{2}(\Gamma)}{\gamma^{-\frac{1}{2}}_h \bPg{\djumph{\uS{u}}}{\laf_h}-\gamma^{\frac{1}{2}}_h\laf}\right)  \left(\sum_{e\in \edges_{\Gamma}^{\text{int}}}{\gamma_{h}}{} h_{e} \NORM{L^{2}(e)} {\jump{\bos{\eta}_{h}}_{e}}^2\right)^{\frac{1}{2}}\\
\overset{\eqref{def.S.Gamma}}&\leq \left(\NORM{L^{2}(\Gamma)}{\gamma^{-\frac{1}{2}}_h\bPg{\djumph{\uS{u}}}{\laf_h}-\gamma^{\frac{1}{2}}_h\laf}\right)  \left(2 C_{\mathrm{D}}\delta^{-\frac{1}{2}}\mathbb{S}_{\Gamma}(\bos{\eta}_h,\bos{\eta}_h)^{\frac{1}{2}}\right)=:I_{4\rm{c}}^{\prime}.
\end{align*}
 The Cauchy--Schwarz inequality and an argument analogous to~\eqref{eq:up.Rh.dof.bound} leads to
\begin{equation*}
 I_{5\rm{c}}\lesssim  \left(\sum_{K\in \cells}\sum_{\sigma\in\cF_{\Gamma, K}^+}\NORM{L^{2}(\sigma)}{\gamma^{\frac{1}{2}}_h\big(\bbsigma(\bo{u})-\Pi_{K}^{k}\bbsigma(\bo{u})\big)}^2\right)^{\frac{1}{2}}\NORM{L^{2}(\Gamma)}{\gamma^{\frac{1}{2}}_h\bos{\eta}_{h}}.
\end{equation*}

\textit{\uline{Step~4}:}~(Estimates for $I_{3}$ and $I_{4}$) For $I_{3}$, the definition of $\Pi_{\cF_{\Gamma}}^{\ell}$ gives
\begin{equation*}
I_{3}=-\NORM{L^{2}(\Gamma)}{\gamma^{\frac{1}{2}}_h\bos{\eta}_{h}}^2.
\end{equation*}
For $I_{4}$, adding and subtracting $\Pi_{\cF_{\Gamma}}^{\ell}\laf_h$ and applying the Cauchy--Schwarz inequality yield
\begin{equation*}
 I_{4} \le  -\mathbb{S}_{\Gamma}(\bos{\eta}_h,\bos{\eta}_h)+ \mathbb{S}_{\Gamma}(\Pi_{\cF_{\Gamma}}^{\ell}\laf_h,\Pi_{\cF_{\Gamma}}^{\ell}\laf_h)^{\frac{1}{2}}\mathbb{S}_{\Gamma}(\bos{\eta}_h,\bos{\eta}_h)^{\frac{1}{2}}.   
\end{equation*}

\textit{\uline{Step~5}:}~(abstract bound)
Substituting the bounds from Steps 3--5 into the right-hand side of \eqref{eq:error.equation}, and moving to the left-hand side the negative contributions of $I_{2\rm{b}},$ $I_3$, the first part of $I_4$, and rearranging the terms lead to
\begin{multline*} 
 \NORM{\mathrm{en},h}{\uS{e}}^2 +  \NORM{L^2(\Gamma)}{
\gamma^{-\frac{1}{2}}_h\bPg{\djumph{\uS{u}}}{\laf_h}-\gamma^{\frac{1}{2}}_h\laf
   }^2+\NORM{L^2(\Gamma)}{\gamma^{\frac{1}{2}}_h \bos{\eta}_h}^2 + \mathbb{S}_{\Gamma}(\bos{\eta}_h,\bos{\eta}_h) \\ \leq C \bigg[ \bos{\fC}_h(\bu,\IhS\bu)\NORM{\mathrm{en},h}{\uS{e}} +\bos{\cW}_h(\bbsigma(\bu))\NORM{\mathrm{en},h}{\uS{e}}  + C_{\mathrm{R}}\gamma_{0}^{\frac{1}{2}}\bos{\fC}_h(\bu,\IhS\bu)\NORM{L^2(\Gamma)}{\gamma^{\frac{1}{2}}_h\bos{\eta}_{h}} + I_{3\rm{c}}^{\prime}
   \\
+ \NORM{L^{2}(\Gamma)}
  {\gamma^{-\frac{1}{2}}_h\bPg{\djumph{\uS{u}}}{\laf_h}-\gamma^{\frac{1}{2}}_h\laf} \left(\NORM{L^2(\Gamma)}{\gamma^{-\frac{1}{2}}_h\big(\jump{\bo{u}}
    -\djumph{\IhS\bo{u}}\big)} +\NORM{L^2(\Gamma)}{\gamma^{\frac{1}{2}}_h\big(\laf-\Pi_{\cF_{\Gamma}}^{\ell}\laf\big)}\right)
   + I_{4\rm{c}}^{\prime}\\
\\+ \left(\sum_{K\in \cells}\sum_{\sigma\in\cF_{\Gamma, K}^+}\NORM{L^{2}(\sigma)}{\gamma^{\frac{1}{2}}_{\sigma}\big(\bbsigma(\bo{u})-\Pi_{K}^{k}\bbsigma(\bo{u})\big)}^2\right)^{\frac{1}{2}}\NORM{L^{2}(\Gamma)}{\gamma^{\frac{1}{2}}_h\bos{\eta}_{h}}+ \mathbb{S}_{\Gamma}(\Pi_{\cF_{\Gamma}}^{\ell}\laf,\Pi_{\cF_{\Gamma}}^{\ell}\laf)^{\frac{1}{2}}\mathbb{S}_{\Gamma}(\bos{\eta}_h,\bos{\eta}_h)^{\frac{1}{2}}\bigg]. 
\end{multline*}
We conclude by applying Young's inequality to the bilinear terms on the right-hand side. Each term involving a consistency or approximation quantity consists of the product of a controlled quantity with a single error contribution and can therefore be bounded by Young's inequality with an arbitrary fixed parameter. The resulting squared error terms are then absorbed into the left-hand side.

The terms $I_{3\rm{c}}^{\prime}$ and $I_{4\rm{c}}^{\prime}$ involve products of two error quantities and consequently require a parameter-dependent splitting. Applying Young's inequality yields
$$
  C I_{3\rm{c}}^{\prime}\leq C^2 C_{\mathrm{R}}^{2}\gamma_{0}\NORM{\mathrm{en},h}{\uS{e}}^{2}
  + \frac{1}{4}\NORM{L^2(\Gamma)}{\gamma^{\frac{1}{2}}_h\bos{\eta}_{h}}^2,
$$
and 
$$
  C I_{4\rm{c}}^{\prime} \leq \frac{1}{4}\NORM{L^{2}(\Gamma)}
  {\gamma^{-\frac{1}{2}}_h\bPg{\djumph{\uS{u}}}{\laf_h}-\gamma^{\frac{1}{2}}_h\laf}^2
  + 4 C^2 C^2_{\mathrm{D}}\delta^{-1}\mathbb{S}_{\Gamma}(\bos{\eta}_h,\bos{\eta}_h).
$$
Choosing $\gamma_{0}$ sufficiently small and $\delta$ sufficiently large ensures that the coefficients
$C^2 C_{\mathrm{R}}^{2}\gamma_{0}$ and $4 C^2 C_{\mathrm{D}}^{2}\delta^{-1}$ are smaller than the corresponding unit coefficients of $\NORM{\mathrm{en},h}{\uS{e}}^{2}$ and $\mathbb{S}_{\Gamma}(\bos{\eta}_h,\bos{\eta}_h)$ on the left-hand side. Consequently, these contributions, together with the associated $\frac14$-weighted terms, can be absorbed into the left-hand side. Collecting the remaining estimates, retaining only the consistency and approximation contributions on the right-hand side, and finally taking square roots concludes the proof of the theorem.

\subsection{Proof of Theorem \ref{thm:error.estimate}}\label{subsec:error.bound}

To prove the error bounds in terms of powers of $h$, we first establish a few results.

\begin{lemma}[Adjoint consistency]\label{lm:adjoint.consis}
Recalling the definition $\eqref{eq:adjoint.consist}$ of the adjoint consistency error $\bos{\cW}_h$, it holds that, for all $\bbtau \in \bo{\Sigma} \cap H^{r}(\cM)^{3\times 3}$ with $\frac{1}{2}<r\leq k+1$, 
\begin{equation*}
  \bos{\cW}_h(\bbtau) \lesssim h^{r}|\bbtau|_{H^{r}(\cM)}.  
\end{equation*}
\end{lemma}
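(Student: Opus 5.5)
The plan is to rewrite $w_h(\bbtau,\uS{v})$ element by element using the definitions of $\GS{K}$ and of the face potentials, so that it becomes a sum of face and edge/vertex remainders involving only $\bbtau-\Pi^k_K\bbtau$. Then we bound these remainders by Cauchy--Schwarz against the components of $\NORM{1,h}{\uS{v}}$.

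\emph{Step 1: element-wise rewriting of the volume and cell terms.} Fix $\uS{v}\in\ddgskS$. Since $\bbtau$ is symmetric, $\bbtau:\bbeps_h(\uS{v})=\bbtau:\GS{h}\uS{v}$, and $\GS{K}\uS{v}\in\Poly{k}(K)^{3\times3}$. Hence we may replace $\bbtau$ by $\Pi^k_K\bbtau$ in the volume term. We then apply \eqref{eq:G1K_enhance} with $\bos{\xi}=\Pi^k_K\bbtau$. In the cell term, $\bo{v}_K\in\Poly{k-1}(K)^3$ and $\DIV\Pi^k_K\bbtau\in\Poly{k-1}(K)^3$. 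Integrating $\int_K\bo{v}_K\cdot\DIV\bbtau$ by parts against $\Pi^{k-1}_K$, and using $\int_K \bo{v}_K\cdot\DIV(\bbtau-\Pi^k_K\bbtau)=-\int_K\nabla\bo{v}_K:(\bbtau-\Pi^k_K\bbtau)+\int_{\partial K}\bo{v}_K\cdot(\bbtau-\Pi^k_K\bbtau)\normal_{\Ksig}$ (the volume part vanishes since $\nabla\bo{v}_K\in\Poly{k-2}$), the cell-unknown contributions cancel. What remains is
\[
w_h(\bbtau,\uS{v})=-\sum_{K}\sum_{\sigma\in\cF_K}\int_\sigma \UpS{\Ksig}\uS{v}\cdot\Pi^k_K\bbtau\,\normal_{\Ksig}+\int_\Gamma\gamma^+_{\bo{n}}\bbtau\cdot\djumph{\uS{v}}-\sum_K\int_{\partial K}\bo{v}_K\cdot(\bbtau-\Pi^k_K\bbtau)\normal_{\Ksig}-\sum_K\sum_{\sigma}\int_\sigma\bo{v}_K\cdot\Pi_K^k\bbtau\normal_{\Ksig}.
\]

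\emph{Step 2: insert the exact normal traces.} We use the conformity of $\bbtau\in\Sigmaspace$: its normal traces are single-valued across interior non-fracture faces and satisfy $\gamma^+_{\bo{n}}\bbtau+\gamma^-_{\bo{n}}\bbtau=\bo{0}$ on $\Gamma$. Moreover, $\UpS{\Ksig}\uS{v}$ is single-valued on faces not in $\cF_\Gamma$ (by the continuity of DOFs and because the bubble correction lives only on $\cF_{\Gamma,K}^+$), and it vanishes on $\partial\O$. Consequently $\sum_K\sum_\sigma\int_\sigma\UpS{\Ksig}\uS{v}\cdot\bbtau\normal_{\Ksig}=-\int_\Gamma\gamma^+_{\bo{n}}\bbtau\cdot\djumph{\uS{v}}$, exactly as in \eqref{traction_sum_R_h}. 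Adding this zero quantity cancels the fracture term. We are left with a sum of face integrals of the form $\int_\sigma(\UpS{\Ksig}\uS{v}-\text{(cell-level trace)})\cdot(\bbtau-\Pi^k_K\bbtau)\normal_{\Ksig}$. The most natural choice of cell-level quantity is $\UpS{K}\uS{v}$ instead of $\bo{v}_K$, obtained by redoing Step 1 with $\UpS{K}$. This is the usual DDR/HHO manipulation: using the definition of $\UpS{K}$ and $\Pi^k_K$, one writes $\int_K\GS{K}\uS{v}:\Pi^k_K\bbtau$ as $\int_K\nabla\UpS{K}\uS{v}:\Pi^k_K\bbtau$ plus face terms $\int_\sigma(\UpS{\Ksig}\uS{v}-\UpS{K}\uS{v})\cdot\Pi^k_K\bbtau\normal_{\Ksig}$. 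The target identity is
\[
w_h(\bbtau,\uS{v})=\sum_K\sum_{\sigma\in\cF_K}\int_\sigma(\UpS{K}\uS{v}-\UpS{\Ksig}\uS{v})\cdot(\bbtau-\Pi^k_K\bbtau)\normal_{\Ksig}+\sum_K\int_K(\bo{v}_K-\UpS{K}\uS{v})\cdot\DIV(\bbtau-\Pi^k_K\bbtau)\text{-type terms},
\]
where the last terms vanish or are handled by $\Pi^{k-1}_K\UpS{K}\uS{v}=\bo{v}_K$. This projection property follows from the definitions as in \cite[Lemma~3]{ddr-variant}, and we need to check that it survives the bubble enrichment.

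\emph{Step 3: estimates.} By Cauchy--Schwarz, each face term is bounded by $h_K^{-1/2}\NORM{L^2(\sigma)}{\UpS{K}\uS{v}-\UpS{\Ksig}\uS{v}}\cdot h_K^{1/2}\NORM{L^2(\sigma)}{\bbtau-\Pi^k_K\bbtau}$. The first factor is controlled by $\mathbb S_K(\uS{v},\uS{v})^{1/2}$. On $\sigma\notin\cF_{\Gamma,K}^+$ this comes from the first line of \eqref{eq:stabK}. On $\sigma\in\cF^+_{\Gamma,K}$ we use the triangle inequality with the first and last lines of \eqref{eq:stabK}, since $\UpS{\Ksig}-\Upsilon^{k+1}_{\Ksig}$ is exactly the bubble correction penalised there. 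The second factor is bounded by the trace approximation property of $\Pi^k_K$ \cite[Theorem~1.45]{hho-book}, giving $h_K^{r}|\bbtau|_{H^r(K)}$ for $\frac12<r\le k+1$; the restriction $r>\frac12$ is exactly what makes face traces meaningful. Summing over $K$ and using discrete Cauchy--Schwarz yields $w_h(\bbtau,\uS{v})\lesssim h^r|\bbtau|_{H^r(\cM)}\NORM{1,h}{\uS{v}}$, which gives the claim.

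The main obstacle is Step 2: obtaining the exact identity that cancels both the cell-unknown term and the fracture term with the bubble-enriched potentials, while leaving only differences that are controlled by $\mathbb S_K$. The bubble correction on $\cF_{\Gamma,K}^+$ makes this bookkeeping delicate. I would first try to follow \cite[Lemma~4.4]{DKMR25} verbatim, replacing the face potentials by $\UpS{\Ksig}$.
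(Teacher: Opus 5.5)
You have a genuine gap, and it sits exactly in your Step~2. Your Step~1 already gives the identity the paper's proof rests on, once two bookkeeping slips are fixed. First, the last term $-\sum_K\sum_{\sigma}\int_\sigma\bo{v}_K\cdot\Pi^k_K\bbtau\,\normal_{\Ksig}$ in your display should not be there: after cancelling $\int_K\bo{v}_K\cdot\DIV\Pi^k_K\bbtau$, the only cell-unknown contribution left is $-\int_{\partial K}\bo{v}_K\cdot(\bbtau-\Pi^k_K\bbtau)\normal_{\Ksig}$. Second, the traction sum equals $+\int_\Gamma\gamma^+_{\bo{n}}\bbtau\cdot\djumph{\uS{v}}$, not $-$. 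On $\sigma\in\cF_\Gamma$ one has $\normal_{\Ksig}=\normal^+$ for the positive-side cell and $\gamma^-_{\bo{n}}\bbtau=-\gamma^+_{\bo{n}}\bbtau$; in \eqref{traction_sum_R_h} the minus sign comes from $\laf=-\gamma^+_{\bo{n}}\bbsigma(\bu)$. With these corrections, Steps~1--2 give the exact identity
\begin{equation*}
w_h(\bbtau,\uS{v})=\sum_{K\in\cM}\sum_{\sigma\in\cF_K}\int_\sigma\big(\UpS{\Ksig}\uS{v}-\bo{v}_K\big)\cdot\big(\bbtau-\Pi^k_K\bbtau\big)\normal_{\Ksig}.
\end{equation*}
Both the cell-unknown term and the fracture term have already disappeared, and no delicate bubble bookkeeping remains. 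This is the identity the paper uses.

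The problem is your decision to replace $\bo{v}_K$ by $\UpS{K}\uS{v}$. Write $\UpS{\Ksig}\uS{v}-\bo{v}_K=(\UpS{\Ksig}\uS{v}-\UpS{K}\uS{v})+(\UpS{K}\uS{v}-\bo{v}_K)$ and integrate the second part by parts; the volume term drops since $\nabla(\UpS{K}\uS{v}-\bo{v}_K)\in\Poly{k}(K)^{3\times3}$. This leaves the remainder $\sum_K\int_K(\UpS{K}\uS{v}-\bo{v}_K)\cdot\DIV(\bbtau-\Pi^k_K\bbtau)$, which does not vanish.

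The orthogonality $\Pi^{k-1}_K\UpS{K}\uS{v}=\bo{v}_K$ is true, and it survives the enrichment: test the definition of $\UpS{K}$ with $\bos{\eta}\in[(\bx-\bx_K)\Poly{k-1}(K)]^3\subset\Poly{k}(K)^{3\times3}$ and compare with \eqref{eq:G1K_enhance}. But it only removes $\DIV\Pi^k_K\bbtau\in\Poly{k-1}(K)^3$, leaving $\int_K(\UpS{K}\uS{v}-\bo{v}_K)\cdot(\DIV\bbtau-\Pi^{k-1}_K\DIV\bbtau)$. For $\frac12<r<1$, $\DIV\bbtau$ is merely in $L^2$ and this yields no power of $h$. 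For $r\ge1$ you would still need $\NORM{L^2(K)}{\UpS{K}\uS{v}-\bo{v}_K}\lesssim h_K\NORM{1,K}{\uS{v}}$, which is not in your plan. Integrating the remainder back to $\partial K$ instead requires $h_K^{-1/2}\NORM{L^2(\partial K)}{\UpS{K}\uS{v}-\bo{v}_K}\lesssim\NORM{1,K}{\uS{v}}$, which is exactly the kind of discrete Poincar\'e--trace bound the detour was meant to avoid.

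The paper simply keeps $\bo{v}_K$ and uses the key estimate
\begin{equation*}
h_K^{-1}\NORM{L^2(\sigma)}{\UpS{\Ksig}\uS{v}-\bo{v}_K}^2\lesssim h_K^{-1}\NORM{L^2(\sigma)}{\Upsilon^{k+1}_{\Ksig}\uvec{v}_h-\bo{v}_K}^2+\mathbb S_K(\uS{v},\uS{v})\lesssim\NORM{1,K}{\uS{v}}^2.
\end{equation*}
Here the bubble correction $\UpS{\Ksig}\uS{v}-\Upsilon^{k+1}_{\Ksig}\uvec{v}_h$ on $\sigma\in\cF^+_{\Gamma,K}$ is controlled by the last line of \eqref{eq:stabK}. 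The first term is controlled by the non-enriched estimate \cite[Lemma~5.1]{DKMR25}.

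With this estimate, your Step~3 goes through verbatim with $\bo{v}_K$ in place of $\UpS{K}\uS{v}$. One further point: for non-integer $r$, bounding $h_K^{-1/2}\NORM{L^2(\sigma)}{\bbtau-\Pi^k_K\bbtau}$ by $h_K^{r-1}\SEMINORM{H^r(K)}{\bbtau}$ needs a fractional multiplicative trace inequality \cite[Remark~2.6]{dong2022hybrid}. It also needs interpolated approximation estimates, not \cite[Theorem~1.45]{hho-book} alone.
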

\begin{proof}
 Let $\uS{v}\in \ddgskS$ and  $\bbtau \in \Sigmaspace \cap H^r(\cells)^{3\times3}.$ Since $\bbtau$ is symmetric, we have $\bbtau:\bbeps_h(\uS{u})=\bbtau:\GS{K} \uS{u}$
and thus, by the definition of $\Pi^k_K$, 
\begin{align*}
  &\int_K \bbtau:\bbeps_h(\uS{v})
  = \int_K\Pi^k_K\bbtau:\GS{K}\uS{v}
 \nonumber\\&  \quad \overset{\eqref{eq:G1K_enhance}}= -\int_K\DIV(\Pi^k_K\bbtau)\cdot \bo{v}_K + \sum_{\sigma\in\cF_K}\int_\sigma \UpS{\Ksig}\uS{v}\cdot\Pi^k_K\bbtau\,\normal_{\Ksig}.
\end{align*} 
Use this in the definition \eqref{eq:adjoint.consist} of $w_{h}(\bbtau, \uS{v}).$ Then, applying the definition of $\Pi_{K}^{k}$ and rearranging the terms yields
\begin{align}
  w_{h}(\bbtau, \uS{v}) &= -\sum_{K\in\cM}\int_K\DIV(\bbtau-\Pi^k_K\bbtau)\cdot \bo{v}_K +\int_\Gamma \gamma^+_{\bo{n}}\bbtau\cdot \djumph{\uS{v}}
     - \sum_{K\in\cM}\sum_{\sigma\in\cF_K}\int_\sigma
       \UpS{\Ksig}\uS{v}\cdot\Pi^k_K\bbtau\normal_{\Ksig} 
     \notag\\
  \overset{\text{IBP}}&=\sum_{K\in\cM}\cancelto{0}{\int_K(\bbtau-\Pi^k_K\bbtau)\cdot \nabla \bo{v}_K} 
    +\int_\Gamma \gamma^+_{\bo{n}}\bbtau\cdot \djumph{\uS{v}}
    - \sum_{K\in\cM}\sum_{\sigma\in\cF_K}\int_\sigma
       \UpS{\Ksig}\uS{v}\cdot\bbtau\,\normal_{\Ksig} \nonumber\\
       & \qquad \quad + \sum_{K\in\cM}\sum_{\sigma\in\cF_K}\int_\sigma
       (\UpS{\Ksig}\uS{v}-\bo{v}_K)\cdot(\bbtau-\Pi^k_K\bbtau)\normal_{\Ksig},\label{eq:W1.expand}
        \end{align}
        where the cancellation after integrating-by-parts comes from the definition of $\Pi_K^k$. Similar arguments as those leading to \eqref{traction_sum_R_h} show that 
\begin{equation*}
\sum_{K\in\cM}\sum_{\sigma\in\cF_K}\int_\sigma
       \UpS{\Ksig}\uS{v}\cdot\bbtau\,\normal_{\Ksig}
         =\int_{\Gamma} \gamma_{\normal}^{+}\bbtau\cdot  \djumph{\uS{v}}.
\end{equation*}
An application of the above equation in \eqref{eq:W1.expand} leads to
\begin{align}\label{eq:wh.simplify.eq}
w_{h}(\bbtau, \uS{v}) =\sum_{K\in\cM}\sum_{\sigma\in\cF_K}\int_\sigma
       (\UpS{\Ksig}\uS{v}-\bo{v}_K)\cdot(\bbtau-\Pi^k_K\bbtau)\normal_{\Ksig}.
\end{align}
The triangle inequality together with the definition definition~\eqref{eq:UpS.face} of $\UpS{\Ksig},$ \eqref{eq:stabK}, and \cite[Lemma~5.1]{DKMR25} lead to
\begin{equation}\label{eq:UPS.vk.diff.bound}
h_{K}^{-1} \NORM{L^{2}(\sigma)}{\UpS{\Ksig}\uS{v}-\bo{v}_K}^{2} \leq h_{K}^{-1} \NORM{L^{2}(\sigma)}{\Upsilon_{\Ksig}^{k+1}\uvec{v}_h-\bo{v}_K}^{2} +\mathbb{S}_{K}(\uS{v},\uS{v}) \lesssim \NORM{1,K}{\uS{v}}^{2} \;\; \forall \sigma \in \cF_{K},
\end{equation}
Now, applying the Cauchy–Schwarz inequality, the fractional multiplicative trace inequality~\cite[Remark~2.6]{dong2022hybrid}, and the approximation properties of $\Pi_{K}^{k}$ -- obtained by interpolating the estimates in~\cite[Theorem~1.45]{hho-book} for integral Sobolev exponents -- on the right side of \eqref{eq:wh.simplify.eq}, we obtain
\begin{align*}
w_{h}(\bbtau,\uS{v})&\lesssim  \sum_{K\in\cM}\bigg(\sum_{\sigma\in\cF_K}\lVert \UpS{\Ksig}\uS{v} -\bo{v}_K\rVert_{L^2(\sigma)}\\
 &\qquad\quad\qquad\times \Big[h^{-\frac{1}{2}}_K\lVert \bbtau-{\Pi}^k_K\bbtau\rVert_{L^2(K)}
 +h^{\min(r-\frac{1}{2},\frac{1}{2})}_K\lvert \bc-{\Pi}^k_K\bc\rvert_{H^{\min(r,1)}(K)}\Big]\bigg)\\
&\lesssim\sum_{K\in\cM}\sum_{\sigma\in\cF_K}h^{-\frac{1}{2}}_K\lVert \UpS{\Ksig}\uS{v} - \bo{v}_K \rVert_{L^2(\sigma)}h^r_K \lvert\bc\rvert_{H^r(K)}\\ 
&\lesssim\left(\sum_{K\in\cM}h^{2r}_K \lvert\bc\rvert^2_{H^r(K)}\right)^{\frac{1}{2}}\left(\sum_{K\in\cM}\Big[\sum_{\sigma\in\cF_K}h^{-\frac{1}{2}}_K\lVert \UpS{\Ksig}\uS{v} - \bo{v}_K\rVert_{L^2(\sigma)}\Big]^2\right)^{\frac{1}{2}}\\
\overset{\eqref{eq:UPS.vk.diff.bound}}&\lesssim h^r\lvert\bc\rvert_{H^r(\cM)}\left(\sum_{K\in\cM}\lVert\uS{v}\rVert^2_{1,K}\right)^{\frac{1}{2}}.
\end{align*}
Finally, combining the above inequality with \eqref{eq:adjoint.consist} completes the proof of the lemma.
\end{proof}

\begin{lemma}[Consistency of jump reconstruction]\label{lem:consis.jump.recons} If $\bu\in \bo{U}_0\cap H^{k+2}(\cM)^{3}$ then
$$\NORM{L^2(\Gamma)}{\gamma^{-\frac{1}{2}}_h\big(\jump{\bo{u}} -\djumph{\IhS\bo{u}}\big)} \lesssim h^{k+1}\SEMINORM{H^{k+2}(\cM)}{\bu}.$$
\end{lemma}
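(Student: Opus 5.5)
We work face by face. Fix $\sigma\in\cF_\Gamma$ with $K$ (resp. $L$) the cell on the positive (resp. negative) side. Since $\gamma_h=\gamma_0h_\Gamma$ and, by \eqref{eq:quasi.uniform.fracture}, $h_\Gamma\lesssim h_\sigma$, we have $\gamma_h^{-1}\lesssim \gamma_0^{-1}h_\sigma^{-1}$. The constant $\gamma_0$ is a fixed parameter absorbed in $\lesssim$. It therefore suffices to prove
\[
h_\sigma^{-\frac12}\NORM{L^2(\sigma)}{\jump{\bu}-\Jsig{\IhS\bu}}\lesssim h_K^{k+1}\SEMINORM{H^{k+2}(K)}{\bu}+h_L^{k+1}\SEMINORM{H^{k+2}(L)}{\bu},
\]
then square, sum over $\sigma$, and use that each cell touches a bounded number of fracture faces.

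By \eqref{eq:jump.n}, $\Jsig{\IhS\bu}=\UpS{\Ksig}\IhS\bu-\UpS{\Lsig}\IhS\bu$, and $\jump{\bu}=\gamma^+\bu-\gamma^-\bu$ with $\gamma^+\bu=(\bu|_K)_{|\sigma}$ and $\gamma^-\bu=(\bu|_L)_{|\sigma}$. The triangle inequality reduces the claim to the one-sided estimate
\[
\NORM{L^2(\sigma)}{\bu|_K-\UpS{\Ksig}\IhS\bu}\lesssim h_K^{k+\frac32}\SEMINORM{H^{k+2}(K)}{\bu},
\]
together with the same estimate for $L$. To prove it, insert $\bo{q}=\Pi^{k+1}_K(\bu|_K)$, or any optimal polynomial approximation in $\Poly{k+1}(K)^3$. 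By the polynomial consistency \eqref{eq:UpS.face_1a}, $\UpS{\Ksig}\IhS\bo{q}=\bo{q}|_\sigma$, and $\UpS{\Ksig}\circ\IhS$ is linear. This gives
\[
\bu|_K-\UpS{\Ksig}\IhS\bu=(\bu-\bo{q})|_\sigma-\UpS{\Ksig}\IhS(\bu-\bo{q}).
\]
The first term is bounded by the continuous trace inequality combined with the approximation properties of $\Pi^{k+1}_K$ \cite[Theorem~1.45]{hho-book}, which give $h_K^{k+3/2}\SEMINORM{H^{k+2}(K)}{\bu}$.

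The main obstacle is the \emph{boundedness} of $\UpS{\Ksig}\circ\IhS$ on $\bo{w}:=\bu-\bo{q}$. We split it as in \eqref{eq:UpS.face}. For the standard part we use \eqref{eq:facePotdof.bound}, then the $L^2$-stability of the projectors defining $\I_{\cK\sigma}$ and $\I_{\cK e}$, which reduces matters to $\NORM{L^2(\sigma)}{\bo{w}}+h_\sigma^{1/2}\sum_e\NORM{L^2(e)}{\bo{w}}+h_\sigma\sum_s|\bo{w}(\bx_s)|$. Each of these is controlled by successive continuous trace inequalities from $K$ to $\sigma$, to $e$, and to $s$; the vertex values are finite since $\bu\in H^{k+2}(K)\subset C^0(\overline K)$ for $k\ge0$ in 3D. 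Each contribution scales as
\[
\sum_{j=0}^{2}h_K^{j-\frac12}\SEMINORM{H^j(K)}{\bo{w}}\lesssim h_K^{k+\frac32}\SEMINORM{H^{k+2}(K)}{\bu}.
\]
We use the version in which the vertex bound involves $H^2$ seminorms, scaled to the reference element. For the bubble correction, present only when $\krond=1$ and $\sigma\in\cF^+_{\Gamma,K}$, the term $\I_{\cK\sigma}\bo{w}+\I^\bub_{\Ksig}\bo{w}-\Pi^\ell_\sigma\Upsilon^{k+1}_{\Ksig}\underline{\mathbb I}_h\bo{w}$ is bounded in $L^2(\sigma)$ by $\NORM{L^2(\sigma)}{\bo{w}}+\NORM{L^2(\sigma)}{\Upsilon^{k+1}_{\Ksig}\underline{\mathbb I}_h\bo{w}}$, using the $L^2$-boundedness of $\Pi^{k-1}_\sigma$ and $\Pi^\ell_\sigma$. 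Both terms were handled above.

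Collecting the estimates gives the face bound $h_\sigma^{-1/2}h^{k+3/2}$, i.e. $h^{k+1}$. Summing over $\sigma\in\cF_\Gamma$ and accounting for the factor $\gamma_h^{-1/2}$ concludes the proof. The point needing most care is the vertex term in the stability step, which requires $\bu$ to be $H^{k+2}$ on each cell, with $k+2\ge2>3/2$.
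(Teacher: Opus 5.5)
Your proposal is correct and follows essentially the same route as the paper. Like the paper, you insert the cellwise $\Poly{k+1}$ projection $\bq$ on $K$ and $L$, use the polynomial consistency \eqref{eq:UpS.face_1a} so that $\Jsig{\IhS\bq}=\jump{\bq}$, and bound $\UpS{\Ksig}\IhS(\bu-\bq)$ through the DOF estimate \eqref{eq:facePotdof.bound}. The only difference is cosmetic: the paper controls the DOFs of $\bu-\bq$ with the $L^\infty$ Sobolev-embedding bound of \cite[Eq.~(5.110)]{hho-book} multiplied by $|\sigma|^{1/2}$, whereas you use scaled trace and embedding inequalities with $H^j$ seminorms, $j\le 2$.

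One small slip: the bound $\gamma_h^{-1}\lesssim\gamma_0^{-1}h_\sigma^{-1}$ follows from $h_\sigma\le h_\Gamma$, which holds because $h_\Gamma$ is defined as a maximum. The quasi-uniformity \eqref{eq:quasi.uniform.fracture}, i.e.\ $h_\Gamma\lesssim h_\sigma$, gives the reverse inequality and is not what this step needs.
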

\begin{proof}
Fix $\sigma \in \cF_{\sigma}$ and let $K$~(resp. $L$) denote, as usual, the cell on the positive~(resp. negative) side of $\sigma.$ Let $\bo{q}_K\in\Poly{k+1}(K)^3$~(resp.~$\bo{q}_L\in\Poly{k+1}(L)^3$) be the $L^2(K)$-(resp. $L^2(L)$)-orthogonal projection of $\bu_{|K}$~(resp.~$\bu_{|L}$), and let $\bo{q}$ be the~(generally discontinuous across $\sigma$) piecewise polynomial equal to $\bo{q}_{K}$ on $K$ and $\bo{q}_{L}$ on $L.$ By the approximation properties of the $L^2$-projector~\cite[Theorem~1.45]{hho-book},
  \begin{equation}\label{eq:u.q.semi.norm.H.bound}
  |\bu -\bo{q}|_{H^{m}(X)} \lesssim h_{K}^{k+2-m}|\bu|_{H^{k+2}(X)} \;\; \text{ for all } m\in\{0,\cdots,k+2\} \text{ and for any }  X\in \{K,L\}.\end{equation}
Now, an application of the Sobolev-embedding estimate~\cite[Eq. (5.110)]{hho-book} yield
\begin{align}
\NORM{L^\infty(\overline{X})}{\bu-\bq}&\lesssim
|X|^{-\frac{1}{2}}\sum_{m=0}^{k+2} h_{X}^{m}|\bu|_{H^{m}(X)}\overset{\eqref{eq:u.q.semi.norm.H.bound}}\lesssim |X|^{-\frac{1}{2}}h_{X}^{k+2}|\bu|_{H^{k+2}(X)}\notag\\& \lesssim h_{X}^{k+\frac{1}{2}}\SEMINORM{H^{k+2}(X)}{\bu} \quad \text{ for any } X\in \{K,L\}.\label{eq:approx.uq}
\end{align}
Applying~\eqref{eq:approx.uq} on each side of $\sigma$ gives
\begin{equation}\label{eq:approx.jump.uq}
\NORM{L^2(\sigma)}{\jump{\bu-\bq}} \leq |\sigma|^{\frac{1}{2}} \NORM{L^\infty(\sigma)}{\jump{\bu-\bq}}\lesssim |\sigma|^{\frac{1}{2}} \left(h_K^{k+\frac{1}{2}}\SEMINORM{H^{k+2}(K)}{\bu}+h_L^{k+\frac{1}{2}}\SEMINORM{H^{k+2}(L)}{\bu}\right).
\end{equation}
Next, set $\uS{w}=\IhS(\bu-\bq).$ The bound \eqref{eq:facePotdof.bound} on $\Upsilon^{k+1}_{\Ksig}\uvec{v}_h$ and the relation \eqref{eq:UpS.face} show that, for all $\sigma \in \mathcal F_{\Gamma,K}^{+},$
 \begin{equation}
\lVert \UpS{\Ksig}\uS{w}\rVert_{L^2(\sigma)} \lesssim \NORM{L^2(\sigma)}{\bo{w}_{\cK\sigma}} + \krond \NORM{L^2(\sigma)}{\bo{w}_{K\sigma}^{\bub}}+ h_\sigma^{\frac{1}{2}}\sum_{e\in\cE_\sigma}\NORM{L^2(e)}{\bo{w}_\Ke} + h_\sigma \sum_{s\in\mathcal V_\sigma}\lvert \bo{w}_\Ks\rvert. \label{eq:face.new.Potdof.bound}
\end{equation}
The definition~\eqref{eq:DOF.Ih} of $\IhS$ easily shows that the degree of freedom of $\uS{w}$ on $\sigma$ are bounded the maximum of $|\sigma|^{\frac{1}{2}}|\bu-\bo{q}|$ on $K\cup L$~(this can be proof in the similar fashion as in~\cite{DKMR25} with additional bubble enrichment). Hence, by the definition~\eqref{eq:jump.n} of discrete jump and \eqref{eq:face.new.Potdof.bound}, we get
\begin{equation}
   \lVert \djumph{\IhS(\bu-\bq)}\rVert_{L^2(\sigma)}= \lVert \djumph{\uS{w}}\rVert_{L^2(\sigma)}  \overset{\eqref{eq:approx.uq}}\lesssim |\sigma|^{\frac{1}{2}}\left(h_K^{k+\frac{1}{2}}\SEMINORM{H^{k+2}(K)}{\bu}+h_L^{k+\frac{1}{2}}\SEMINORM{H^{k+2}(L)}{\bu}\right).  \label{eq:approx.jump.IUDuq}
\end{equation}
On the other hand, the polynomial consistency~\eqref{eq:UpS.face_1a} applied on each side of $\sigma$ gives $\Jsig{\IhS\bo{q}}=\jump{\bo{q}}.$ Then, we can write 
\begin{align*}
\NORM{L^2(\sigma)}{\gamma^{-\frac{1}{2}}_h\big(\Jsig{\IhS\bu}-\jump{\bu}\big)}
& \le 
  \NORM{L^2(\sigma)}{\gamma^{-\frac{1}{2}}_{h}\big(\jump{\IhS(\bu-\bq)}_\sigma\big)}
  + \NORM{L^2(\sigma)}{\gamma^{-\frac{1}{2}}_h\big(\jump{\bu-\bq}\big)}\nonumber\\
  &  \lesssim 
 h_{\Gamma}^{-\frac{1}{2}} \left(\NORM{L^2(\sigma)}{\big(\jump{\IhS(\bu-\bq)}_\sigma\big)}
  + \NORM{L^2(\sigma)}{\big(\jump{\bu-\bq}\big)}\right)\nonumber\\
\overset{\eqref{eq:approx.jump.IUDuq},\eqref{eq:approx.jump.uq}} & \lesssim  h_{\Gamma}^{-\frac{1}{2}} |\sigma|^{\frac{1}{2}}\left(h_K^{k+\frac{1}{2}}\SEMINORM{H^{k+2}(K)}{\bu}+h_L^{k+\frac{1}{2}}\SEMINORM{H^{k+2}(L)}{\bu}\right),\\
& \lesssim  h^{k+1}_{\sigma}\left(\SEMINORM{H^{k+2}(K)}{\bu}+\SEMINORM{H^{k+2}(L)}{\bu}\right),
\end{align*}
where $\gamma_{h}=\gamma_{0}h_{\Gamma}$ is used in the second step. The last estimate follows from the quasi-uniformity of the mesh on $\Gamma$~\eqref{eq:quasi.uniform.fracture}, which gives $h_\Gamma\approx h_\sigma$, combined with the mesh regularity assumptions $|\sigma|^{\frac{1}{2}}\approx h_{\sigma}$, $h_{K}\approx h_{\sigma},$ and $h_{L}\approx h_{\sigma}$; together these give $h_{\Gamma}^{-\frac{1}{2}} |\sigma|^{\frac{1}{2}} \approx h_{\sigma}^{-\frac12}\,h_\sigma = h_{\sigma}^{\frac{1}{2}}\approx h_{K}^{\frac{1}{2}} \approx h_{L}^{\frac{1}{2}}.$ Now, Squaring, summing over $\sigma\in\faces_\Gamma$, and taking the square root concludes the proof.
\end{proof}

\begin{lemma}[Consistency of gradient reconstruction]\label{lm:primal.consis}
Let $\bu\in \bo{U}_0\cap H^{k+2}(\cM)^{3}$. Recalling the definition \eqref{eq:primal.consist} of $\bos{\fC}_{h}$, it holds  
\begin{align*}
\bos{\fC}_{h}(\bu,\IhS\bu) \lesssim  h^{k+1}|\bu|_{H^{k+2}(\cM)}.
\end{align*}
\end{lemma}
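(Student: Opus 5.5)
The plan is to prove the estimate locally on each cell, by a polynomial-consistency plus boundedness argument, and then sum over $K\in\cM$. Fix $K\in\cM$ and let $\bo{q}_K\in\Poly{k+1}(K)^3$ be the $L^2(K)$-orthogonal projection of $\bu_{|K}$. As noted at the end of the proof of Lemma~\ref{lem:prop.reconstructions}, the local operators are polynomially consistent: $\GS{K}\IhS\bo{q}_K=\nabla\bo{q}_K$ and $\mathbb S_K(\IhS\bo{q}_K,\IhS\bo{q}_K)=0$. Both $\GS{K}$ and $\mathbb S_K$ depend only on the local DOFs of $K$, which are those of $\bu_{|K}$. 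Set $\uS{w}:=\IhS(\bu-\bo{q}_K)$, understood locally on $K$. Since $\IhS$ is linear and $\mathbb{S}_K$ is a symmetric nonnegative bilinear form, we get
\begin{equation*}
\NORM{L^2(K)}{\nabla\bu-\GS{K}\IhS\bu}^2+\mathbb S_K(\IhS\bu,\IhS\bu)
\lesssim \NORM{L^2(K)}{\nabla(\bu-\bo{q}_K)}^2+\NORM{1,K}{\uS{w}}^2 .
\end{equation*}
The first term is $\lesssim h_K^{2(k+1)}|\bu|^2_{H^{k+2}(K)}$ by \cite[Theorem~1.45]{hho-book}.

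For the second term, I would prove a local boundedness estimate
\begin{equation*}
\NORM{1,K}{\IhS\bo{v}}\lesssim \sum_{m=0}^{k+2}h_K^{m-1}\SEMINORM{H^m(K)}{\bo{v}}\qquad \forall\,\bo{v}\in \boldsymbol{H}^{k+2}(K),
\end{equation*}
and apply it with $\bo{v}=\bu-\bo{q}_K$. Combined with \eqref{eq:u.q.semi.norm.H.bound}, every term then contributes $h_K^{k+1}|\bu|_{H^{k+2}(K)}$.

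To get this boundedness, I would bound each component of $\NORM{1,K}{\cdot}$ in terms of DOF norms:
\begin{itemize}
\item $\GS{K}$ is controlled by taking $\bos\xi=\GS{K}\uS{w}$ in \eqref{eq:G1K_enhance}, then using the inverse and discrete trace inequalities, which gives $h_K^{-1}\NORM{L^2(K)}{\bo{w}_K}+h_K^{-1/2}\sum_\sigma\NORM{L^2(\sigma)}{\UpS{\Ksig}\uS{w}}$.
\item The face, edge and cell potentials are controlled by \eqref{eq:facePotdof.bound}, \eqref{eq:face.new.Potdof.bound}, \eqref{eq:edgPotdof.bound} and \eqref{eq:cellPotdof.bound}.
\item The bubble term of $\mathbb S_K$ is bounded by the $L^2(\sigma)$-norms of $\bo{w}_{\cK\sigma}$, $\bo{w}^\bub_{\Ksig}$ and $\Upsilon^{k+1}_{\Ksig}\uvec{w}_h$.
\end{itemize}
Altogether, $\NORM{1,K}{\uS{w}}$ is bounded by $h_K^{-1}$ times a scaled $L^2$-type DOF norm: cell DOFs weighted by $1$, face DOFs by $h_K^{1/2}$, edge DOFs by $h_K$, vertex DOFs by $h_K^{3/2}$. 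Each DOF of $\IhS\bo{v}$ is an $L^2$ projection or a point value, so all of them are bounded by the $L^\infty(\overline K)$ norm of $\bo{v}$ times the corresponding measure. The Sobolev embedding \cite[Eq.~(5.110)]{hho-book} then gives $\NORM{L^\infty(\overline K)}{\bo{v}}\lesssim |K|^{-1/2}\sum_m h_K^m|\bo{v}|_{H^m(K)}$. Since $|K|\approx h_K^3$, every scaled DOF is $\lesssim \sum_m h_K^m|\bo{v}|_{H^m(K)}$, which yields the claimed estimate after multiplying by $h_K^{-1}$. This is the same argument as in \eqref{eq:approx.uq}, with the factor $h_K^{-1}$ in front absorbing one power of $h$.

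Finally, summing over $K$ and using $h_K\le h$ gives the result; no quasi-uniformity is needed here. The main obstacle is the careful bookkeeping of the bubble contributions in the boundedness step, namely $\UpS{\Ksig}$ on $\sigma\in\faces^+_{\Gamma,K}$ and the last term of $\mathbb S_K$. I expect it to be routine, because $\NORM{L^2(\sigma)}{\I^\bub_{\Ksig}\bo{v}}\le 2\NORM{L^2(\sigma)}{\bo{v}}$ by the $L^2$-boundedness of $\Pi^\ell_\sigma$ and $\Pi^{k-1}_\sigma$.
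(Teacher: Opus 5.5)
Your proposal is correct and follows essentially the same route as the paper. You take the local $L^2(K)$-projection onto $\Poly{k+1}(K)^3$, use the polynomial consistency of the enriched reconstructions to remove the polynomial part, and bound the remainder through the degrees of freedom with the $L^\infty$ Sobolev estimate and the approximation properties of the projector. You also carry out explicitly the $\|\cdot\|_{1,K}$ boundedness bookkeeping, including the bubble contributions, which the paper only sketches by reference to \cite[Lemma~5.8]{DKMR25}.
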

\begin{proof}
The estimate is obtained exactly as the analogous gradient–consistency result
for the (non-enriched) DDR reconstructions in \cite[Lemma~5.8]{DKMR25};
We only indicate why the face-bubble enrichment leaves the argument unchanged.

The regularity $\mathbf u\in \bo{U}_0\cap H^{k+2}(\mathcal M)^3$ guarantees
$\mathbf u\in \bo{C}^0_0(\Omega\setminus\Gamma)$, so that $\IhS\mathbf u$ is well
defined. On each $K\in\mathcal M$ one introduces the $L^2(K)$-orthogonal
projection $\mathbf q$ of $\mathbf u$ onto $\mathbb P^{k+1}(K)^3$ and writes
$\IhS\mathbf u=\IhS\mathbf q+\IhS(\mathbf u-\mathbf q)$ (also locally, cell by cell). The polynomial part is
reproduced exactly: by Lemma~\ref{lem:prop.reconstructions}, the enriched operators satisfy
$\GS{K}\IhS\mathbf q=\nabla\bq$ and $\UpS{K\sigma}\IhS\bq=\bq$,
the bubble interpolator $\IhS$ having been designed precisely
so that the enriched reconstructions retain the same polynomial consistency as
their standard counterparts. Hence, the bubble degrees of freedom contribute no
additional term on the polynomial argument, and the consistency error reduces to
the interpolation of the residual $\mathbf u-\mathbf q$. Controlling
$\IhS(\bu-\bq)$ through its degrees of freedom by the
$L^\infty$-approximation bound \cite[Eq.(5.110)]{hho-book} on $\mathbf u-\mathbf q$ and the boundedness of
the enriched reconstructions in the local norm $\NORM{1,K}{\cdot},$ and then
invoking the approximation properties of the $L^2$-projection~\cite[Theorem~1.45]{hho-book} on $\mathbb P^{k+1}(K)^3$, yields the local bound by $ h_K^{k+1}\,|\mathbf u|_{H^{k+2}(K)}$. Squaring, summing over
$K\in\mathcal M$, using $h_K\le h$ and taking the square root, concludes the lemma.
\end{proof}

\begin{proof}[\textit{\textbf {Proof of Theorem~\ref{thm:error.estimate}:}}] 
We estimate each of the terms appearing on the right-hand side of the abstract error bound stated in Theorem~\ref{thm:abs.err.bound}.

First, Lemma~\ref{lm:primal.consis} yields
$$\bos{\fC}_{h}(\bu,\IhS\bu) \lesssim h^{k+1}|\bu|_{H^{k+2}(\cM)}.
$$
Next, applying Lemma~\ref{lm:adjoint.consis}, we obtain
$$\bos{\cW}_h(\bbsigma(\bu))  \lesssim h^{k+1}|\bbsigma(\bu)|_{H^{k+1}(\cells)}.$$
For the third term, an application of Lemma~\ref{lem:consis.jump.recons} 
yields
$$\NORM{L^2(\Gamma)}{\gamma^{-\frac{1}{2}}_{h}\big(\jump{\bo{u}}
    -\djumph{\IhS\bo{u}}\big)}
    \lesssim h^{k+1}\SEMINORM{H^{k+2}(\cells)}{\bu}.
    $$  

For the fourth term, using the approximation properties of
$\Pi_{\cF_{\Gamma}}^{\ell}$ (see~\cite[Theorem~1.45]{hho-book}), $h_{\sigma } \lesssim h_{\Gamma}$ and $\gamma_h=\gamma_0 h_{\Gamma}$, we infer that
\begin{equation}
\NORM{L^2(\Gamma)}{\gamma^{\frac{1}{2}}_{h}\big(\laf-\Pi_{\cF_{\Gamma}}^{\ell}\laf\big)} \lesssim h^{\frakr}|\laf|_{H^{\frakr-\frac12}(\cF_\Gamma)},
\label{eq:bound.lambda.Piell}
\end{equation}
where 
$$
\frakr=
\begin{cases}
k+1, &\text{ if $\ell=k$ (active bubble)},\\[1mm]
k+\frac{1}{2}, & \text{if $\ell= k-1$ (inactive bubble)}.
\end{cases}
$$
To estimate the penultimate term, we combine the local trace inequality~\cite[Lemma~1.31]{hho-book}, the approximation properties of $\Pi_K^k$ \cite[Theorem~1.45]{hho-book},  $\gamma_h=\gamma_0h_{\Gamma}$ and $h_{\sigma}\approx h_{K}$ to obtain
\begin{align*}
&\left(\sum_{K\in \cells}\sum_{\sigma\in\cF_{\Gamma, K}^+}\NORM{L^{2}(\sigma)}{\gamma^{\frac{1}{2}}_{h} \big(\bbsigma(\bo{u})-\Pi_{K}^{k}\bbsigma(\bo{u})\big)}^2\right)^{\frac{1}{2}} \\&  \lesssim \left(\sum_{K\in \cells}\Big[\NORM{L^{2}(K)}{\bbsigma(\bo{u})-\Pi_{K}^{k}\bbsigma(\bo{u})}^2 +h^2_{K}\NORM{L^{2}(K)}{\nabla\big(\bbsigma(\bo{u})-\Pi_{K}^{k}\bbsigma(\bo{u})\big)}^2\Big]\right)^{\frac{1}{2}} \\& \lesssim h^{k+1}|\bbsigma(\bu)|_{H^{k+1}(\cells)}.
\end{align*}
 
Finally, since $\laf\in H^{\frac{1}{2}+\epsilon}(\Gamma)$, its jumps vanish across interface edges, namely
$\jump{\laf}_{e}=0$ for all $e\in\edges_{\Gamma}^{\mathrm{int}}$.
Using this property together with the trace inequality
\cite[Lemma~1.31]{hho-book} with $h_{e}\approx h_{\sigma}$ and the approximation property of the $L^2$-projector $\Pi_{\cF_{\Gamma}}^{\ell}$ \cite[Lemma~1.45]{hho-book}, we obtain 
\begin{align*}
 \mathbb{S}_{\Gamma}(\Pi_{\cF_{\Gamma}}^{\ell}\laf,\Pi^{\ell}_{\cF_{\Gamma}}\laf)^\frac{1}{2}&= {\left( \sum_{e\in \edges_{\Gamma}^{\text{int}}}\delta \gamma_{h}  \int_{e}  h_{e} \jump{\Pi_{\cF_{\Gamma}}^{\ell}\laf-\laf}_{e}^2\right)^{\frac{1}{2}}}\nonumber\\& \lesssim   \left(\sum_{\sigma \in \faces_{\Gamma}}\Big[\NORM{L^2(\sigma)}{\gamma^{\frac{1}{2}}_{h}\big(\Pi_{\cF_{\Gamma}}^{\ell}\laf-\laf \big)}^2 +h^2_{\sigma}\NORM{L^{2}(K)}{\nabla\big(\gamma^{\frac{1}{2}}_{h}\big(\Pi_{\cF_{\Gamma}}^{\ell}\laf-\laf\big)\big)}^2\Big]\right)^{\frac{1}{2}}\nonumber\\
 &\lesssim h^{\frakr}|\laf|_{H^{\frakr-\frac12}(\cF_\Gamma)}.
\end{align*}
Plugging the six estimates above into the abstract bound of Theorem~\ref{thm:abs.err.bound} concludes the proof.
\end{proof}

\section{A stabiliser-free $\mathbb{S}_{\Gamma}$ variant: suboptimal bound by projecting the discrete jump}
\label{sec:nostab}

The discrete jump~\eqref{eq:jump.n} is valued in $\Poly{k+1}(\sigma)^3$, whereas the
multiplier belongs to $\Ms$, whose components lie in $\Poly{\ell}(\sigma)^3$ with
$\ell\le k$. The mismatch between these two polynomial degrees on the fracture faces is exactly what the stabilisation term $\mathbb{S}_{\Gamma}$ in~\eqref{def.S.Gamma}
compensates for in the error analysis, through the bound on $I_{4\rm{c}}$ in the proof of Theorem~\ref{thm:abs.err.bound}. In this section, we show that projecting the discrete jump onto $\Poly{\ell}(\sigma)^3$ removes this mismatch, allows $\mathbb{S}_{\Gamma}$ to be discarded, and still yields a convergent scheme. 
The advantage is a scheme without direct connectivity of Lagrange multiplier degrees of freedom between adjacent faces, which may result in local elimination of those and reduction of the global system size (see Remark \ref{remark:eliminationlambda} below). The price to pay is a loss of rate of convergence in the consistency of jump reconstruction, and thus suboptimal error estimates (see also the remark \ref{remark:equivalencemixed} below for the specific case $\ell=0$).

We replace the discrete jump~\eqref{eq:jump.n} by its degree-$\ell$ projection on each side
of the fracture: for all $\uS{v}\in\ddgskS$ and all $\sigma\in\cF_{\Gamma}$,
\begin{equation}\label{eq:jump.modify}
\Jsig{\uS{v}}:=\Pi_{\sigma}^{\ell}\UpS{\Ksig}\uS{v} - \Pi_{\sigma}^{\ell}\UpS{\Lsig}\uS{v}
\;\in\;\Poly{\ell}(\sigma)^3 ,
\end{equation}
and we set $\mathbb{S}_{\Gamma}\equiv 0$ in the contact operator~\eqref{form1_ch}. The scheme~\eqref{eq:scheme} is otherwise unchanged.

Two changes occur in the proof of Theorem~\ref{thm:abs.err.bound}.
First, the term $I_{4\rm{c}}$ vanishes. For the previous jump \eqref{eq:jump.n},
identity~\eqref{eq:jump.approx} gives
$\Pi_{\sigma}^{\ell}\djumph{\RhS}-\djumph{\RhS}
=\Pi_{\sigma}^{\ell}\Upsilon_{\Ksig}^{k+1}\uvec{R}_{h}-\Upsilon_{\Ksig}^{k+1}\uvec{R}_{h}$,
the nonzero high-degree remainder that $\mathbb{S}_{\Gamma}$ was introduced to control (see \eqref{eq:prop.Rh.ups.bound}). With the modified jump~\eqref{eq:jump.modify}, we have
$\djumph{\RhS}\in\Poly{\ell}(\cF_{\Gamma})^3$ so $\Pi_{\cF_{\Gamma}}^{\ell}\djumph{\RhS}-\djumph{\RhS}=0$ and therefore $I_{4\rm{c}}=0$.
Consequently $\mathbb{S}_{\Gamma}$ no longer enters the bound on $I_{4\rm{c}}^{\prime}$, and
the two stabilisation contributions of Theorem~\ref{thm:abs.err.bound} namely, the energy-norm
term $\mathbb{S}_{\Gamma}(\Pi_{\cF_{\Gamma}}^{\ell}\laf-\laf_h,\Pi_{\cF_{\Gamma}}^{\ell}\laf-\laf_h)^{\frac{1}{2}}$
on the left and the consistency term
$\mathbb{S}_{\Gamma}(\Pi_{\cF_{\Gamma}}^{\ell}\laf,\Pi_{\cF_{\Gamma}}^{\ell}\laf)^{\frac{1}{2}}$
on the right are removed.

Second, the primal identity~\eqref{eq:D1.Rh2} is modified. Since
$\big(\djumph{\uS{v}}\big)_{|_\sigma}\in \Poly{\ell}(\sigma)^3,$ the multiplier may be projected onto $\Poly{\ell}(\sigma)^3$ when paired against it; equivalently, $\Pi_{\faces_{\Gamma}}^{\ell}\bbsigma(\bo{u})$ may replace $\bbsigma(\bo{u})$ in the traction sum over fracture faces. Repeating the steps leading to~\eqref{eq:D1.Rh2} (by introducing $\pm \Pi^\ell_{\faces_\Gamma}\bbsigma(\bo{u})\normal_{\Ksig}$ instead of $\pm \bbsigma(\bo{u})\normal_{\Ksig}$ in \eqref{eq:sigma.Rh2}), we obtain
\begin{align*}
\int_\Omega\bbsigma(\bo{u}):\bbeps_h(\RhS)
  + \int_\Gamma\laf \cdot \djumph{\RhS}
 &= -\sum_{K\in \cells}\sum_{\sigma\in\cF_{\Gamma,K}^+}\int_\sigma
      \UpS{\Ksig}\RhS\cdot\big(\Pi_{\faces_{\Gamma}}^{\ell}\bbsigma(\bo{u})
      -{\Pi}^k_K\bbsigma(\bo{u})\big)\normal_{\Ksig}\nonumber\\
 &= \sum_{K\in \cells}\sum_{\sigma\in\cF_{\Gamma,K}^+}\int_\sigma
      \UpS{\Ksig}\RhS\cdot\big(\Pi_{\faces_{\Gamma}}^{\ell}\laf-\laf\big)\nonumber\\
 &\quad -\sum_{K\in \cells}\sum_{\sigma\in\cF_{\Gamma,K}^+}\int_\sigma
      \UpS{\Ksig}\RhS\cdot\big(\bbsigma(\bo{u})-{\Pi}^k_K\bbsigma(\bo{u})\big)\normal_{\Ksig},
\end{align*}
where we have used $\gamma_{\normal}^{+}\bbsigma(\bo{u})=-\laf$ together with
$\Pi_{\faces_{\Gamma}}^{\ell}\gamma_{\normal}^{+}\bbsigma(\bo{u})
-\gamma_{\normal}^{+}\bbsigma(\bo{u})=-(\Pi_{\faces_{\Gamma}}^{\ell}\laf-\laf)$. The first term on the right-hand side is controlled by the multiplier-approximation quantity $\NORM{L^2(\Gamma)}{\gamma_h^{\frac{1}{2}}(\laf-\Pi_{\cF_{\Gamma}}^{\ell}\laf)}$ already present in Theorem~\ref{thm:abs.err.bound}, and the second is the term $I_{5\rm{c}}$; no new contribution is needed.

Collecting these two modifications, the discrete solution of the stabiliser-free scheme satisfies the abstract bound of Theorem~\ref{thm:abs.err.bound} with the  two
$\mathbb{S}_{\Gamma}$ contributions deleted, the jump-consistency term being now governed by Lemma~\ref{lem:consis.jump.recons_modified} below instead of Lemma~\ref{lem:consis.jump.recons}.

\begin{lemma}[Consistency of the modified jump reconstruction]\label{lem:consis.jump.recons_modified}
Let $\bu\in \bo{U}_0\cap H^{k+2}(\cM)^{3}$. Then
$$\NORM{L^2(\Gamma)}{\gamma^{-\frac{1}{2}}_{h}\big(\jump{\bo{u}} -\djumph{\IhS\bo{u}}\big)}
\lesssim h^{\ell+\frac{1}{2}}\left(\SEMINORM{H^{k+2}(\cM)}{\bu}
+\SEMINORM{H^{\ell+1}(\cF_\Gamma)}{\jump{\bu}} \right).$$
\end{lemma}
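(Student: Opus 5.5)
The plan is to split the error on each fracture face $\sigma\in\cF_\Gamma$ (with $K$ on the positive side and $L$ on the negative side) by inserting $\Pi^\ell_\sigma\jump{\bu}$:
\[
\jump{\bu}-\Jsig{\IhS\bu}=\big(\jump{\bu}-\Pi^\ell_\sigma\jump{\bu}\big)+\Pi^\ell_\sigma\big(\jump{\bu}-\UpS{\Ksig}\IhS\bu+\UpS{\Lsig}\IhS\bu\big).
\]
This uses the definition \eqref{eq:jump.modify}, which gives $\Jsig{\IhS\bu}=\Pi^\ell_\sigma\big(\UpS{\Ksig}\IhS\bu-\UpS{\Lsig}\IhS\bu\big)$. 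The second bracket is exactly the unprojected jump error of \eqref{eq:jump.n}. The first term is a pure face approximation error for the trace $\jump{\bu}$.

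For the first term, I apply the approximation properties of $\Pi^\ell_\sigma$ \cite[Theorem~1.45]{hho-book} with $\jump{\bu}_{|\sigma}\in H^{\ell+1}(\sigma)$. This gives $\NORM{L^2(\sigma)}{\jump{\bu}-\Pi^\ell_\sigma\jump{\bu}}\lesssim h_\sigma^{\ell+1}\SEMINORM{H^{\ell+1}(\sigma)}{\jump{\bu}}$. Multiplying by $\gamma_h^{-1/2}=\gamma_0^{-1/2}h_\Gamma^{-1/2}$ and using the quasi-uniformity \eqref{eq:quasi.uniform.fracture} ($h_\sigma\approx h_\Gamma$) yields $h^{\ell+\frac12}\SEMINORM{H^{\ell+1}(\sigma)}{\jump{\bu}}$. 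This term is the source of the loss of order. When $\ell=k-1$ one cannot do better than degree $\ell$ in the projection, so $\ell+\frac12$ is the bottleneck.

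For the second term, I use the $L^2(\sigma)$-boundedness of $\Pi^\ell_\sigma$ to remove the projector. What remains is $\NORM{L^2(\sigma)}{\jump{\bu}-\jump{\IhS\bu}_\sigma}$ with the original, unprojected jump \eqref{eq:jump.n}. The proof of Lemma~\ref{lem:consis.jump.recons} bounds precisely this face-wise quantity. Its final display, scaled by $\gamma_h^{-1/2}$, gives $h_\sigma^{k+1}\big(\SEMINORM{H^{k+2}(K)}{\bu}+\SEMINORM{H^{k+2}(L)}{\bu}\big)$. In that proof I only need the local bound before summation, which relies on the polynomial consistency \eqref{eq:UpS.face_1a} with the local projections $\bq$ and on the DOF bound \eqref{eq:face.new.Potdof.bound}. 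Since $\ell\le k$ and $h\lesssim 1$ (bounded domain), $h^{k+1}\lesssim h^{\ell+\frac12}$.

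Finally, I add the two contributions, square, sum over $\sigma\in\cF_\Gamma$ using finite overlap of the cell pairs $K,L$, and take square roots. This gives the stated bound.

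The only delicate point is the regularity of $\jump{\bu}$ on each face. It is fine to state it as a broken seminorm over $\cF_\Gamma$, as in the lemma. It is also implied face-wise by $\bu\in H^{k+2}$ on $K$ and $L$ through traces, but I would keep it as a separate term as stated rather than derive it.
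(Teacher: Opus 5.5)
Your proof is correct and follows essentially the same route as the paper. The paper uses a three-term triangle inequality, inserting the local projection $\bq$ and $\Pi^\ell_\sigma\jump{\bu}$; your two-term splitting groups the two $\bq$-dependent pieces into the unprojected jump error. After removing $\Pi^\ell_\sigma$ by $L^2$-stability, that error is controlled face-wise by \eqref{eq:approx.jump.uq}--\eqref{eq:approx.jump.IUDuq} from the proof of Lemma~\ref{lem:consis.jump.recons}, while $\jump{\bu}-\Pi^\ell_\sigma\jump{\bu}$ gives the dominant $h^{\ell+\frac12}$ contribution in both versions.
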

\begin{proof}
The proof follows the lines of Lemma~\ref{lem:consis.jump.recons} with one change. From \eqref{eq:jump.modify} and the polynomial consistency~\eqref{eq:UpS.face_1a} on each side of $\sigma$, the modified jump reproduces only the degree-$\ell$ part of a polynomial trace, so $\Jsig{\IhS\bo{q}}=\Pi_{\sigma}^{\ell}\jump{\bo{q}}$ while, in contrast  $\Jsig{\IhS\bo{q}}=\jump{\bo{q}}$ is used in Lemma~\ref{lem:consis.jump.recons}. Hence the triangle inequality gives
\begin{align*}
\NORM{L^2(\sigma)}{\gamma^{-\frac{1}{2}}_{\sigma}\big(\Jsig{\IhS\bu}-\jump{\bu}\big)}
&\le
  \NORM{L^2(\sigma)}{\gamma^{-\frac{1}{2}}_{h}\big(\jump{\IhS(\bu-\bq)}_\sigma\big)}
  + \NORM{L^2(\sigma)}{\gamma^{-\frac{1}{2}}_{h}\big(\Pi_{\sigma}^{\ell}\jump{\bq-\bu}\big)}  \\
&\qquad + \NORM{L^2(\sigma)}{\gamma^{-\frac{1}{2}}_{h}\big(\Pi_{\sigma}^{\ell}\jump{\bu}-\jump{\bu}\big)}\\
& \lesssim h_{\Gamma}^{-\frac{1}{2}}\left(\NORM{L^2(\sigma)}{\jump{\IhS(\bu-\bq)}_\sigma} + \NORM{L^2(\sigma)}{\jump{\bq-\bu}} + \NORM{L^2(\sigma)}{\Pi_{\sigma}^{\ell}\jump{\bu}-\jump{\bu}}\right)\\
\overset{\eqref{eq:approx.jump.uq},\eqref{eq:approx.jump.IUDuq}} &\lesssim
 h_{\Gamma}^{-\frac{1}{2}} \left(|\sigma|^{\frac{1}{2}}\big(h_K^{k+\frac{1}{2}}\SEMINORM{H^{k+2}(K)}{\bu}+h_L^{k+\frac{1}{2}}\SEMINORM{H^{k+2}(L)}{\bu}\big)
  +h_{\sigma}^{\ell+1}\SEMINORM{H^{\ell+1}(\sigma)}{\jump{\bu}}\right)\\
&\lesssim  h^{\ell+\frac{1}{2}}_{\sigma}\big(\SEMINORM{H^{k+2}(K)}{\bu}
  +\SEMINORM{H^{k+2}(L)}{\bu}+\SEMINORM{H^{\ell+1}(\sigma)}{\jump{\bu}}\big),
\end{align*}
where $\gamma_{h}=\gamma_{0}h_{\Gamma}$ and the boundedness property of $\Pi_{\sigma}^{\ell}$~(only for third term) are used in the second step. The third estimate, which is indeed the dominant term in the second-to-last step, follows from the approximation properties of $\Pi_{\sigma}^{\ell}$~\cite[Theorem~1.45]{hho-book}. The last estimate follows from the quasi-uniformity of the mesh on $\Gamma$~\eqref{eq:quasi.uniform.fracture}, which gives $h_\Gamma\approx h_\sigma$, combined with the mesh regularity assumptions $|\sigma|^{\frac{1}{2}}\approx h_{\sigma}$, $h_{K}\approx h_{\sigma},$ and $h_{L}\approx h_{\sigma}$; together these give $h_{\Gamma}^{-\frac{1}{2}} |\sigma|^{\frac{1}{2}} \approx h_{\sigma}^{-\frac12}\,h_\sigma = h_{\sigma}^{\frac{1}{2}}\approx h_{K}^{\frac{1}{2}} \approx h_{L}^{\frac{1}{2}}.$ Finally,
Squaring, summing over $\sigma\in\faces_\Gamma$ and taking the square root concludes the
proof.
\end{proof}
\begin{proof}[\textit{\textbf {Proof of Theorem~\ref{thm:error.estimate.sub}:}}] 
Repeating the proof of Theorem~\ref{thm:error.estimate} with Lemma~\ref{lem:consis.jump.recons_modified} in place of Lemma~\ref{lem:consis.jump.recons}, and with $\mathbb{S}_{\Gamma}\equiv 0$, yields the suboptimal estimate stated in Theorem~\ref{thm:error.estimate.sub}, of order
$h^{\ell+\frac{1}{2}}$. This is to be compared with the optimal rate $h^{\frakr+\frac{1}{2}}$ of
Theorem~\ref{thm:error.estimate}: the stabiliser-free variant loses half an order when $\ell=k$ and a full order when $\ell=k-1$. 
\end{proof}

\begin{remark}[Local elimination of the Lagrange multiplier]\label{remark:eliminationlambda}
We first notice that the Lagrange multiplier $\laf_h$ can be used in the variational mechanical equilibrium rather than the projection $\Pi_{\cF_{\Gamma}}^{\ell} \gamma^{-1}_h\bPg{\djumph{\uS{u}}}{\laf_h}$ resulting in a linear mechanical equilibrium equation. Substituting the unknowns $( \bo{v}_{\cK\sigma}, \bo{v}^{\bub}_{\Ksig}) \in \Poly{k-1}(\sigma)^{3} \times \Poly{\ell \perp k-1}(\sigma)^3$ by the discrete jump $\Jsig{\uS{v}} \in \Poly{\ell}(\sigma)^{3}$ for $\sigma \in \faces_\Gamma$ diagonalises the coupling operator between the displacement and Lagrange multiplier in the mechanical equilibrium equation. It results that the Lagrange multiplier can be locally eliminated from the mechanical equilibrium and substituted in the contact equation
$$
\laf_h = \Pi_{\cF_{\Gamma}}^{\ell}  \gamma^{-1}_h\bPg{\djumph{\uS{u}}}{\laf_h}. 
$$
This opens the way to efficient block preconditioners to solve the linear systems resulting from the semi-smooth Newton linearisation of the discrete non linear system.   
\end{remark}

\begin{remark}[Equivalence with mixed formulations for $\ell=0$]\label{remark:equivalencemixed}
In the specific case $\ell=0$ combined with the stabilisation free and the projected jump formulation, the contact equation reduces to 
$$
\laf_h = \gamma^{-1}_h\bPg{\djumph{\uS{u}}}{\laf_h},  
$$
which in turn is equivalent, for any $\gamma_0 > 0$, to the exact contact conditions for the discrete jump $\jump{\uS{v}}_{\sigma}$ and Lagrange multiplier $\laf_{\sigma}$ for each fracture face $\sigma \in \faces_\Gamma$. 
It results that the Augmented Lagrangian scheme is equivalent to the mixed formulation developed in \cite{droniou2023bubble,droniou2025analysis} for $k=0$ and in \cite{DKMR25} for $k=1$ for which improved error estimates have been derived for Tresca friction of order $h^{3/4}$ ($h$ in the frictionless case) for $k=\ell=0$,  and of order $h$  ($h^{3/2}$ in the frictionless case) for $k=1, \ell=0$. 
\end{remark}

\section{Conclusion}\label{sec:conclusion}
In this work, two augmented Lagrangian Discrete de Rham (DDR) schemes are proposed for contact mechanics problems subject to Tresca friction. The first formulation, based on the standard displacement jump, incorporates an edge-based jump stabilizer for the Lagrange multiplier and yields optimal error estimates. Conversely, the variant utilizing a projected jump bypasses both the stabilizer and its associated parameter~$\delta$, thereby enabling the local elimination of the multiplier. Although the analytical error bound derived for the projected-jump formulation is suboptimal, numerical experiments demonstrate convergence rates that significantly exceed this theoretical limit.

Intersecting-fracture benchmarks illustrate the practical implications of each approach. For the standard jump scheme, increasing the stabilization parameter $\delta$ successfully mitigates localized fluctuations in the normal multiplier, though excessive stabilisation artificially suppresses its peak magnitude. In contrast, the higher-order projected-jump schemes accurately capture the contact transition without tuning of~$\delta$. This observations suggest that, at high-order, the projected-jump variant provides a robust strategy and that the standard displacement jump with the contact stabilisation calls for an adaptive choice of the parameter $\delta$ following for instance the approach of \cite{MORETTO2026}. Deriving sharper analytical error estimates for the projected jump scheme remains an open question for future research.

\section*{Acknowledgement}
Funded by the European Union (ERC Synergy, NEMESIS, project number 101115663). Views and opinions expressed are, however, those of the authors only and do not necessarily reflect those of the European Union or the European Research Council Executive Agency. Neither the European Union nor the granting authority can be held responsible for them.
%
\printbibliography

@article{MORETTO2026,
title = {A stabilized mortar method for discontinuities in geological media with non-conforming grids},
journal = {Computer Methods in Applied Mechanics and Engineering},
volume = {460},
pages = {119087},
year = {2026},
issn = {0045-7825},
doi = {https://doi.org/10.1016/j.cma.2026.119087},
url = {https://www.sciencedirect.com/science/article/pii/S0045782526003609},
author = {Daniele Moretto and Andrea Franceschini and Massimiliano Ferronato}
}

@article{droniou2026locking,
  title={A locking-free nodal-based polytopal method for linear elasticity},
  author={Droniou, Jerome and Kumar, Raman},
  journal={arXiv preprint arXiv:2601.16728},
  year={2026}
}

@article{gmsh,
author = {Geuzaine, Christophe and Remacle, Jean-François},
title = {Gmsh: A 3-D finite element mesh generator with built-in pre- and post-processing facilities},
journal = {International Journal for Numerical Methods in Engineering},
volume = {79},
number = {11},
pages = {1309-1331},
doi = {https://doi.org/10.1002/nme.2579},
url = {https://onlinelibrary.wiley.com/doi/abs/10.1002/nme.2579},
eprint = {https://onlinelibrary.wiley.com/doi/pdf/10.1002/nme.2579},
year = {2009}
}

@article{burman2010interior,
  title={Interior-penalty-stabilized Lagrange multiplier methods for the finite-element solution of elliptic interface problems},
  author={Burman, Erik and Hansbo, Peter},
  journal={IMA journal of numerical analysis},
  volume={30},
  number={3},
  pages={870--885},
  year={2010},
  publisher={Oxford University Press}
}

@article{burman2010fictitious,
  title={Fictitious domain finite element methods using cut elements: I. A stabilized Lagrange multiplier method},
  author={Burman, Erik and Hansbo, Peter},
  journal={Computer Methods in Applied Mechanics and Engineering},
  volume={199},
  number={41-44},
  pages={2680--2686},
  year={2010},
  publisher={Elsevier}
}

@article {Chouly2014,
    AUTHOR = {Chouly, Franz},
     TITLE = {An adaptation of {N}itsche's method to the {T}resca friction
              problem},
   JOURNAL = {J. Math. Anal. Appl.},
  FJOURNAL = {Journal of Mathematical Analysis and Applications},
    VOLUME = {411},
      YEAR = {2014},
    NUMBER = {1},
     PAGES = {329--339},
      ISSN = {0022-247X,1096-0813},
   MRCLASS = {65N30 (35Q74 65N12 74M10 74M15 74S05)},
  MRNUMBER = {3118488},
MRREVIEWER = {Marius\ Ghergu},
       DOI = {10.1016/j.jmaa.2013.09.019},
       URL = {https://doi.org/10.1016/j.jmaa.2013.09.019},
}

@book{chouly2023finite,
  title={Finite element approximation of contact and friction in elasticity},
  author={Chouly, Franz and Hild, Patrick and Renard, Yves},
  year={2023},
  publisher={Springer}
}

@article{dong2022hybrid,
  title={Hybrid high-order and weak Galerkin methods for the biharmonic problem},
  author={Dong, Zhaonan and Ern, Alexandre},
  journal={SIAM Journal on Numerical Analysis},
  volume={60},
  number={5},
  pages={2626--2656},
  year={2022},
  publisher={SIAM}
}

@article{droniou2025analysis,
  title={Analysis of a VEM--fully discrete polytopal scheme with bubble stabilisation for contact mechanics with Tresca friction},
  author={Droniou, J{\'e}r{\^o}me and Haidar, Ali and Masson, Roland},
  journal={ESAIM: Mathematical Modelling and Numerical Analysis},
  volume={59},
  number={2},
  pages={1043--1074},
  year={2025},
  publisher={EDP Sciences}
}

@article{di2025uniform,
  title={Uniform Poincar$\backslash$'$\{$e$\}$ inequalities for the discrete de Rham complex of differential forms},
  author={Di Pietro, Daniele and Droniou, J{\'e}r{\^o}me and Hanot, Marien-Lorenzo and Pitassi, Silvano},
  journal={arXiv preprint arXiv:2501.16116},
  year={2025}
}

@Article{Di-Pietro.Droniou.ea:24,
  title         = {A pressure-robust {Discrete de Rham} scheme for the {Navier--Stokes} equations},
  author        = {Di Pietro, D. A. and Droniou, J. and Qian, J. J.},
  journal       = {Comput. Meth. Appl. Mech. Engrg.},
  year          = {2024},
  volume        = {421},
  number        = {116765},
  doi           = {10.1016/j.cma.2024.116765}
}

@article {Burman.Hansbo.Larson:17,
    AUTHOR = {Burman, Erik and Hansbo, Peter and Larson, Mats G.},
     TITLE = {Augmented {L}agrangian finite element methods for contact
              problems},
   JOURNAL = {ESAIM Math. Model. Numer. Anal.},
  FJOURNAL = {ESAIM. Mathematical Modelling and Numerical Analysis},
    VOLUME = {53},
      YEAR = {2019},
    NUMBER = {1},
     PAGES = {173--195},
      ISSN = {2822-7840,2804-7214},
   MRCLASS = {65N30 (65N12 74M15 74S05)},
  MRNUMBER = {3937350},
MRREVIEWER = {Corina-\c Stefania\ Drapaca},
       DOI = {10.1051/m2an/2018047},
       URL = {https://doi.org/10.1051/m2an/2018047},
}

@article{alart1991mixed,
  title={A mixed formulation for frictional contact problems prone to Newton like solution methods},
  author={Alart, Pierre and Curnier, Alain},
  journal={Computer methods in applied mechanics and engineering},
  volume={92},
  number={3},
  pages={353--375},
  year={1991},
  publisher={Elsevier}
}

@article{DKMR25,
  title={A higher order polyhedral method for contact mechanics with Tresca friction},
  author={Droniou, J{\'e}r{\^o}me and Kumar, Raman and Masson, Roland and Singla, Ritesh},
  journal={Journal of Computational Physics},
  pages={114923},
  year={2026},
  publisher={Elsevier}
}

@article{ddr-variant,
	AUTHOR = {Di Pietro, Daniele A. and Droniou, J\'er\^ome},
  TITLE = {An arbitrary-order discrete de Rham complex on polyhedral meshes: Exactness, Poincar\'e inequalities, and consistency},
	JOURNAL={Found. Comput. Math.},
	VOLUME={23},
  PAGES={85--164},
  YEAR={2023},
  DOI={10.1007/s10208-021-09542-8},
  URL={https://arxiv.org/abs/2101.04940},
}

@article{WRR2016,
	author = {Wriggers, P. and Rust, W. T. and Reddy, B. D.},
	journal = {Computational Mechanics},
	number = {6},
	pages = {1039--1050},
	title = {A virtual element method for contact},
	volume = {58},
	year = {2016}}

@article{CEP20,
author = {Chouly, Franz and Ern, Alexandre and Pignet, Nicolas},
title = {A Hybrid High-Order Discretization Combined with Nitsche's Method for Contact and Tresca Friction in Small Strain Elasticity},
journal = {SIAM Journal on Scientific Computing},
volume = {42},
number = {4},
pages = {A2300-A2324},
year = {2020}
}

@article{droniou2023bubble,
title = {A bubble VEM--fully discrete polytopal scheme for mixed-dimensional poromechanics with frictional contact at matrix-fracture interfaces},
journal = {Computer Methods in Applied Mechanics and Engineering},
volume = {422},
pages = {116838},
year = {2024},
issn = {0045-7825},
doi = {https://doi.org/10.1016/j.cma.2024.116838},
url = {https://www.sciencedirect.com/science/article/pii/S004578252400094X},
author = {Droniou, J\'er\^ome and Ench\'ery, Guillaume and Faille, Isabelle and Haidar, Ali and Masson, Roland},
}

@article{Renard03,
	author = {Ben Belgacem, F. and Renard, Y.},
	journal = {Mathematics of Computation},
	number = {},
	pages = {1117--1145},
	title = {Hybrid Finite Element Methods for the Signorini problem},
	volume = {72},
	year = {2003}}

@article{BHL2023,
	author = {Burman, Erik and Hansbo, Peter and Larson, Mats G.},
	journal = {Archives of Computational Methods in Engineering},
	number = {4},
	pages = {2579--2604},
	title = {The Augmented {Lagrangian} Method as a Framework for Stabilised Methods in Computational Mechanics},
	volume = {30},
	year = {2023}}

@article{NEJATI2016123,
title = {A finite element framework for modeling internal frictional contact in three-dimensional fractured media using unstructured tetrahedral meshes},
journal = {Computer Methods in Applied Mechanics and Engineering},
volume = {306},
pages = {123-150},
year = {2016},
issn = {0045-7825},
doi = {https://doi.org/10.1016/j.cma.2016.03.028},
url = {https://www.sciencedirect.com/science/article/pii/S004578251630113X},
author = {Morteza Nejati and Adriana Paluszny and Robert W. Zimmerman}
}

@article{BoonNordbotten22,
	Author = {Boon, W. M. and Nordbotten, J. M.},
	Da = {2022/12/04},
	Doi = {10.1007/s00707-022-03378-1},
	Id = {Boon2022},
	Isbn = {1619-6937},
	Journal = {Acta Mechanica},
	Title = {Mixed-dimensional poromechanical models of fractured porous media},
	Ty = {JOUR},
	Url = {https://doi.org/10.1007/s00707-022-03378-1},
	Year = {2022}}

@article{Lleras-2009,
	author = {Lleras, V.},
	title = {A Stabilized {Lagrange} Multiplier Method for the Finite Element
Approximation of Frictional Contact Problems in Elastostatics},
	DOI= "10.1051/mmnp/20094108",
	url= "https://doi.org/10.1051/mmnp/20094108",
	journal = {Mathematical Modelling of Natural Phenomena},
	year = 2009,
	volume = 4,
	number = 1,
	pages = "163-182",
}

@article {BDMP:21,
    AUTHOR = {Bonaldi, F. and Droniou, J. and Masson, R. and
              Pasteau, A.},
     TITLE = {Energy-stable discretization of two-phase flows in deformable
              porous media with frictional contact at matrix-fracture
              interfaces},
  FJOURNAL = {J. Comput. Phys.},
  JOURNAL = {Journal of Computational Physics},
    VOLUME = {455},
      YEAR = {2022},
     PAGES = {Paper No. 110984, 28},
      ISSN = {0021-9991},
   MRCLASS = {65M08 (65M12 76S05 76T06)},
  MRNUMBER = {4371138},
       DOI = {10.1016/j.jcp.2022.110984},
       URL = {https://doi.org/10.1016/j.jcp.2022.110984},
}

@incollection{HASLINGER1996313,
title = {Numerical methods for unilateral problems in solid mechanics},
series = {Handbook of Numerical Analysis},
publisher = {Elsevier},
volume = {4},
pages = {313-485},
year = {1996},
booktitle = {Finite Element Methods (Part 2), Numerical Methods for Solids (Part 2)},
issn = {1570-8659},
doi = {https://doi.org/10.1016/S1570-8659(96)80005-6},
url = {https://www.sciencedirect.com/science/article/pii/S1570865996800056},
author = {Haslinger J.  and Hlav\'acek, I. and Necas, J.}
}

@book{book-contact-2005,
	author = {Eck, C. and Jaru\v{s}ek, J. and Krbec, C.},
	doi = {10.1201/9781420027365},
	publisher = {CRC Press},
	title = {Unilateral Contact Problems: Variational Methods and Existence Theorems},
	year = {2005}}

@article{JHA2014,
	author = {Jha, B. and Juanes, R.},
	doi = {https://doi.org/10.1016/j.egypro.2014.11.360},
	issn = {1876-6102},
	journal = {Energy Procedia},
	note = {12th International Conference on Greenhouse Gas Control Technologies, GHGT-12},
	pages = {3313-3329},
	title = {Coupled Modeling of Multiphase Flow and Fault Poromechanics During Geologic CO2 Storage},
	url = {https://www.sciencedirect.com/science/article/pii/S1876610214021754},
	volume = {63},
	year = {2014}}

@book{oden-contact,
	author = {Kikuchi, N. and Oden, J.T.},
	publisher = {SIAM Society for Industrial and Applied Mathematics},
	series = {Studies in Applied and Numerical Mathematics},
	title = {Contact Problems in Elasticity: A Study of Variational Inequalities and Finite Element Methods},
	year = {1988}}

@book{wriggers2006,
	author = {Wriggers, P.},
	edition = {2nd},
	publisher = {Springer},
	title = {Computational Contact Mechanics},
	year = {2006}}

@unpublished{CHLR2020,
	author = {Chouly, F. and Hild, P. and Lleras, V. and Renard, Y.},
	note = {Preprint \hal{hal-02938032}},
	title = {{Nitsche method for contact with Coulomb friction: existence results for the static and dynamic finite element formulations}},
	year = {2020}}

@article{HR2007,
	author = {Hild, P. and Renard, Y.},
	journal = {SIAM Journal on Numerical Analysis},
	number = {5},
	pages = {2012-2031},
	title = {An Error Estimate for the {S}ignorini Problem with {C}oulomb Friction Approximated by Finite Elements},
	volume = {45},
	year = {2007}}

@article{GH19,
	author = {T.T. Garipov and M.H. Hui},
	doi = {https://doi.org/10.1016/j.ijrmms.2019.104075},
	issn = {1365-1609},
	journal = {International Journal of Rock Mechanics and Mining Sciences},
	pages = {104075},
	title = {Discrete Fracture Modeling approach for simulating coupled thermo-hydro-mechanical effects in fractured reservoirs},
	url = {https://www.sciencedirect.com/science/article/pii/S1365160918311018},
	volume = {122},
	year = {2019}}

@article{thm-bergen,
title = {A fully coupled numerical model of thermo-hydro-mechanical processes and fracture contact mechanics in porous media},
journal = {Computer Methods in Applied Mechanics and Engineering},
volume = {386},
pages = {114122},
year = {2021},
issn = {0045-7825},
doi = {https://doi.org/10.1016/j.cma.2021.114122},
url = {https://www.sciencedirect.com/science/article/pii/S0045782521004539},
author = {Stefansson, Ivar and Berre, Inga and Keilegavlen, Eirik},
}

@article{contact-norvegiens,
	author = {Berge, R. L. and Berre, I. and Keilegavlen, E. and Nordbotten, J. M. and Wohlmuth, B.},
	doi = {10.1002/nme.6238},
	journal = {International Journal for Numerical Methods in Engineering},
	pages = {644--663},
	title = {Finite volume discretization for poroelastic media with fractures modeled by contact mechanics},
	volume = {121},
	year = {2019}}

@article{Wohlmuth11,
	author = {Wohlmuth, B.},
	journal = {Acta Numerica},
	pages = {569--734},
	title = {Variationally consistent discretization schemes and numerical algorithms for contact problems},
	volume = {20},
	year = {2011}}

@inproceedings{Chouly2017,
	address = {Cham},
	author = {Chouly, F. and Fabre, M. and Hild, P. and Mlika, R. and Pousin, J. and Renard, Y.},
	booktitle = {Geometrically Unfitted Finite Element Methods and Applications},
	editor = {Bordas, St{\'e}phane P. A. and Burman, Erik and Larson, Mats G. and Olshanskii, Maxim A.},
	isbn = {978-3-319-71431-8},
	pages = {93--141},
	publisher = {Springer International Publishing},
	title = {An Overview of Recent Results on {N}itsche's Method for Contact Problems},
	year = {2017}}

@book{hho-book,
	TITLE={The Hybrid High-Order Method for Polytopal Meshes: Design, Analysis, and Applications},
	AUTHOR={Di Pietro, Daniele Antonio and Droniou, J\'er\^ome},
	YEAR={2020},
	PAGES={xxxi + 525p},
	SERIES = {Modeling, Simulation and Applications},
	PUBLISHER = {Springer International Publishing},
  volume        = {19},
  isbn          = {978-3-030-37202-6 (Hardcover), 978-3-030-37203-3 (eBook)},
  doi           = {10.1007/978-3-030-37203-3},
	URL={https://hal.archives-ouvertes.fr/hal-02151813},
	coverthumb = {images/cover-HHO_book-reduced.png},
	cover = {images/cover-HHO_book.png},
}

@article{keilegavlen-nordbotten,
	author = {Keilegavlen, E. and Nordbotten, J.M.},
	journal = {International Journal for Numerical Methods in Engineering},
	pages = {939--962},
	title = {Finite volume methods for elasticity with weak symmetry},
	volume = {112},
	year = {2017}}

@article{beirao-brezzi-marini,
	author = {Beir{\~a}o Da Veiga, L. and Brezzi, F. and Marini, L.D.},
	journal = {SIAM Journal on Numerical Analysis},
	pages = {794--812},
	title = {Virtual elements for linear elasticity problems},
	volume = {51},
	year = {2013}}

@article{dipietro-ern,
	author = {Di Pietro, D. and Ern, A.},
	journal = {Computer Methods in Applied Mechanics and Engineering},
	pages = {1--21},
	title = {A hybrid high-order locking-free method for linear elasticity on general meshes},
	volume = {283},
	year = {2015}}

@article{hansbo-larson,
	author = {Hansbo, P. and Larson, M.G.},
	journal = {ESAIM: Mathematical Modelling and Numerical Analysis},
	pages = {63--72},
	title = {Discontinuous {G}alerkin and the {C}rouzeix--{R}aviart element: application to elasticity},
	volume = {37},
	year = {2003}}

@article{MJE05,
	author = {Martin, V. and Jaffr\'e, J. and Roberts, J. E.},
	journal = {SIAM Journal on Scientific Computing},
	pages = {1667--1691},
	title = {Modeling fractures and barriers as interfaces for flow in porous media},
	volume = {26},
	year = {2005}}

@book{gdm,
	author = {Droniou, J. and Eymard, R. and Gallou\"et, T. and Guichard, C. and Herbin, R.},
	doi = {10.1007/978-3-319-79042-8},
	isbn = {978-3-319-79041-1 (Softcover) 978-3-319-79042-8 (eBook)},
	pages = {511p},
	publisher = {Springer},
	series = {Mathematics \& Applications},
	title = {The {G}radient {D}iscretisation {M}ethod},
	volume = {82},
	year = {2018}}

@article{beaude2023mixed,
  title={Mixed and Nitsche’s discretizations of Coulomb frictional contact-mechanics for mixed dimensional poromechanical models},
  author={Beaude, Laurence and Chouly, Franz and Laaziri, Mohamed and Masson, Roland},
  journal={Computer Methods in Applied Mechanics and Engineering},
  volume={413},
  pages={116124},
  year={2023},
  publisher={Elsevier}
}

@article{da2015virtual,
  title={A virtual element method for elastic and inelastic problems on polytope meshes},
  author={Da Veiga, L Beir{\~a}o and Lovadina, Carlo and Mora, David},
  journal={Computer methods in applied mechanics and engineering},
  volume={295},
  pages={327--346},
  year={2015},
  publisher={Elsevier}
}

\end{document}